\renewcommand*\env@matrix[1][*\c@MaxMatrixCols c]{%
  \hskip -\arraycolsep
  \let\@ifnextchar\new@ifnextchar
  \array{#1}}
\newtheorem{thm}{Theorem}[subsection]
\newtheorem{cor}[thm]{Corollary}%[section]
\newtheorem{lemma}[thm]{Lemma}%[section]
\newtheorem{prop}[thm]{Proposition}
\newtheorem{conj}[thm]{Conjecture}
\newtheorem*{thm*}{Theorem}
\newtheorem*{fact*}{Fact}
\newtheorem{defn}[thm]{Definition}
\newtheorem*{defn*}{Definition}
\begin{document}
\input Margulis.macros
\title[Proper affine actions]
{Proper actions of discrete groups of affine transformations}
\author[Danciger]{Jeffrey Danciger}
    \address{{\it Danciger:\/}
    Department of Mathematics\\ Univeristy of Texas, Austin\\
	Austin,  TX  78712 USA}
    \email{jdanciger@math.utexas.edu}
\author[Drumm]{Todd A. Drumm}
    \address{{\it Drumm:\/}
    Department of Mathematics\\ Howard University\\
    Washington, DC 20059 USA }
    \email{tdrumm@howard.edu}
\author[Goldman]{William M. Goldman}
    \address{{\it Goldman:\/}
    Department of Mathematics\\ University of Maryland\\
    College Park, MD 20742 USA}
    \email{wmg@math.umd.edu}
\author[Smilga]{Ilia Smilga}
    \address{{\it Smilga:\/}
    Institut de Math{\'e}matiques de Marseille (UMR 7373) \\
    Aix-Marseille Universit{\'e} \& CNRS \\
    39 rue F. Joliot Curie \\
    13453 MARSEILLE Cedex, France}
    \email{ilia.smilga@normalesup.org}
    \thanks{Danciger was partially supported by NSF grants DMS 1510254, and DMS 1812216 and by an Alfred P. Sloan Foundation fellowship.
    Goldman 
gratefully acknowledges research support from NSF Grants DMS1065965, DMS1406281,
DMS1709791,
Smilga acknowledges support from NSF grant DMS1709952 and from the European Research Council (ERC) under the European Union Horizon 2020 research and innovation programme, grant 647133 (ICHAOS).  
The authors gratefully acknowledge support from the Research Network in the Mathematical Sciences  DMS 1107452, DMS 1107263, DMS 1107367 (GEAR).
}
\dedicatory{Dedicated to Grisha Margulis on the occasion of his seventieth birthday}

\date{\today}

\begin{abstract}
In the early 1980's Margulis startled the world by showing the
existence of proper affine actions of free groups on $3$-space,
answering a provocative and suggestive question Milnor posed in 1977.
In this paper we discuss the historical background motivating this question,
recent progress on this subject, 
and future directions inspired by this discovery.
\end{abstract}

\maketitle
\newpage
\tableofcontents
\noindent{Solicited for: 
\textbf{Dynamics, Geometry, Number Theory: the Impact of Margulis on
Modern Mathematics}}

\section*{Introduction}

The theory of flat Riemannian manifolds, also known as Euclidean manifolds, is well understood.
Starting with its nineteenth-century origins in theoretical crystallography,
Euclidean crystallographic groups and complete flat Riemannian manifolds have a satisfying
and cohesive structure theory. 
In particular, the Bieberbach theorems imply that every closed flat Riemannian manifold is finitely covered by a torus or equivalently, any Euclidean crystallographic group is virtually a lattice in $\Rn$, in particular is virtually free abelian. 
This survey concerns {\em complete affine manifolds,\/} 
a natural generalization of complete Euclidean manifolds whose structure theory, by contrast, 
remains tantalizingly mysterious and poorly understood.
The famous {\em Auslander conjecture\/} --- 
that every compact complete affine manifold has virtually polycyclic fundamental group --- 
has been a focal point for research in this field. 
Now known to be true in dimensions $< 7$, it remains open in general.

The last forty years have seen major advances in the theory of complete affine manifolds.
A significant breakthrough was Margulis's discovery in 1983 of proper affine actions
of nonabelian free groups in dimension $3$, 
and the subsequent classification of complete affine $3$-manifolds. 
As any proper affine action by a free group in dimension $3$ preserves a Lorentzian structure,
the corresponding complete affine $3$-manifolds are, more specifically, 
complete flat Lorentzian $3$-manifolds. They are known today as {\em Margulis spacetimes.\/}

Associated to a Margulis spacetime $M^3$ is a (necessarily noncompact) complete hyperbolic surface  $\Sigma$
homotopy-equivalent to $M^3$,
and we call $M^3$ an {\em affine deformation\/}  of  $\Sigma$.
Hence, the deformation space of Margulis spacetimes whose associated hyperbolic surface 
$\Sigma$ has a fixed topological type $S$ naturally projects down to the  Fricke-Teichm\"uller space 
$\mathfrak{F}(S)$ of $S$. 
The fiber of this projection consists of equivalence classes of 
{\em proper affine deformations\/} of $\Sigma$. 
A clear picture has emerged of the fiber of this projection 
as an open convex cone in the space of infinitesimal deformations of the
hyperbolic structure on $\Sigma$. 
Much of this paper describes this point of view. 
Crucial is the properness criterion for affine deformations developed
by Goldman, Labourie and Margulis~\cite{MR2600870}.
Along the way, we will also collect the known results 
on the topology and geometry of Margulis spacetimes and give an overview of the 
current state of the art in higher-dimensional affine geometry.

The fundamental problem in Euclidean crystallography was, in modern parlance, 
the classification of fundamental polyhedra for Euclidean crystallographic groups. 
However, in the setting of Margulis spacetimes, standard constructions for 
fundamental polyhedra do not work.
The introduction by Drumm of {\em crooked polyhedra\/} around 1990 
--- about a decade after Margulis' discovery --- 
provided tools for building fundamental domains and led in particular to the discovery that there exist Margulis 
spacetimes which are affine deformations of {\em any\/} non-compact complete hyperbolic surface of finite type.
This kindled momentum for the subject and marked the beginning of a classification program for Margulis spacetimes which was completed only recently.

The outline of this paper is as follows. 
Section 1 summarizes  the early history of the subject,
beginning with Bieberbach's ``classification'' of Euclidean manifolds,
and its subsequent generalizations.
These generalizations --- due to Zassenhaus, Wang, Auslander, Mostow and others ---
set the stage for the the classification of complete affine manifolds with virtually 
solvable fundamental group.
In 1977,
Milnor asked whether every complete affine manifold has 
virtually solvable fundamental group, or equivalently, if proper affine actions of 
the two-generator free group $\Ft$ {\em do not exist\/}.
Shortly thereafter, Margulis surprised everyone by showing the existence of 
complete affine manifolds with fundamental group $\Ft$.

Section 2 begins the construction and classification of Margulis spacetimes,
modeled on the geometric construction and classification of hyperbolic surfaces.
Crooked geometry is developed, including the disjointness criteria for crooked planes
which is fundamental in setting up the geometric conditions necessary for
building Schottky groups.
We briefly describe a compactification of $M^3$ as a flat $\rpthree$-manifold due to 
Suhyoung Choi, 
which implies that $M^3$ is homeomorphic to an open solid handlebody.

Section 3 introduces the {\em marked Lorentzian signed length spectrum,\/} 
or {\em Margulis invariant,\/} denoted $\alpha$. 
The Margulis invariant is an $\R$-valued class function on $\pi_1(M) \cong \Gamma$, 
and ever since Margulis introduced this quantity, 
it has played an important role in the geometry of Margulis spacetimes. 
The simplest type of Margulis spacetime occurs when the associated hyperbolic surface
has compact convex core (or equivalently, $\pi_1 \Sigma = \Gamma_0 < \Isom(\Ht)$ is
{\em convex cocompact.\/}) 
In this case,  every holonomy transformation is hyperbolic, 
and every essential loop is freely homotopic to a closed geodesic. 
A classical result in hyperbolic geometry asserts that such hyperbolic structures 
are determined up to isometry by their {\em marked length spectrum,\/}  
the $\Rplus$-valued class function on $\Gamma$
associating to $\gamma\in\pi_1(\Sigma)$ the {\em hyperbolic length\/}
$\ell(\gamma)$ of the unique closed geodesic  in $\Sigma$ that is homotopic to $\gamma$.
The {\em magnitude\/} of Margulis's invariant $\alpha(\gamma)$ 
\begin{align*}
\pi_1(\Sigma) &\xrightarrow{\vert\alpha\vert} \Rplus \\
\gamma &\longmapsto \vert \alpha(\gamma)\vert
\end{align*}
corresponds to the {\em Lorentzian length\/} of a closed geodesic homotopic to $\gamma$.
In particular, the isometry type of a Margulis spacetime $M^3$ is determined by
the marked length spectrum $\ell$ of $\Sigma$ and the absolute value 
$\vert\alpha\vert$ of the Margulis invariant.

In fact, only $\vert\alpha\vert$ is needed to determine the isometry type of $M^3$.
We discuss extensions of the definition of the Margulis invariant and of these 
results to the setting where $\Sigma$ has cusps.

Section 4 develops a properness criterion for actions of free groups in three-dimensional affine geometry. 
This turns out to be closely related to the {\em direction\/} or {\em sign\/} of $\alpha$. 
As originally noted by Margulis (the {\em Opposite Sign Lemma}), 
for any Margulis spacetime $M$, 
the sign of 
\[ \Gamma = \pi_1 M \xrightarrow{~\alpha~} \R \] is constant, 
either positive or negative.
A simple proof, given by Goldman-Labourie-Margulis, 
involves the continuous extension of the {\em normalized Margulis invariant\/}
$\alpha/\ell$ to the connected convex set of {\em geodesic currents\/} on $\Sigma$. 
This leads to the description of the deformation space of Margulis spacetimes
with fixed hyperbolic surface $\Sigma$ as an open convex cone in the  
vector  space of affine deformations of $\Gamma_0$, naturally the cohomology group 
$H^1(\Gamma_0,\Rto)$, where $$\Gamma_0 = \pi_1 \Sigma < \Isom(\Ht) = \SOto$$ 
is the holonomy group of $\Sigma$.

Section 5 develops the connection between affine actions in three-dimensional 
flat Lorentzian geometry and infinitesimal deformations of hyperbolic surfaces. 
Due to the low-dimensional coincidence that the standard action of $\SOto$ on 
$\R^3$ is isomorphic to the adjoint action on the lie algebra $\soto$, the space 
$H^1(\Gamma_0,\Rto)$ of affine deformations of the surface group $\Gamma_0 < \SOto$ 
is in natural bijection with the space of infinitesimal deformations of the 
representation $\Gamma_0 \hookrightarrow \SOto$, which in turn identifies 
with the space of infinitesimal deformations of the hyperbolic surface $\Sigma$. 
This interpretation leads to the fundamental result of  Mess that the hyperbolic surface $\Sigma$
associated to a Margulis spacetime cannot be closed.
In particular, if $\Gamma$ is a nonsolvable discrete group acting affinely on $3$-space,
then $\Gamma$ must be virtually free. 

The infinitesimal deformations of hyperbolic structures on $\Sigma$ which 
arise from proper affine actions
may be represented  by what Danciger--Gu\'eritaud--Kassel 
call {contracting \em lipschitz\/} vector fields,
which are the infinitesimal analogs of contracting Lipschitz maps on the hyperbolic plane. 

Section 5 develops the theory of lipschitz vector fields and 
a structure theorem for Margulis spacetimes:
{\em $M^3$ is a bundle of timelike lines over the hyperbolic surface $\Sigma$.\/}
This gave an independent proof of the topological characterization of Margulis 
spacetimes referenced above.
A discretized version of contracting lipschitz vector fields, known as infinitesimal 
strip deformations, was used by Danciger-Gu\'eritaud-Kassel to parameterize the 
deformation space of Margulis spacetimes associated to $\Sigma$ in terms of the 
\emph{arc complex} of $\Sigma$. 
We describe concretely the consequences of this general theory in the case that 
$\Sigma$ has Euler characteristic $-1$. The qualitative behavior depends on the 
topology of $\Sigma$, which is one of 
four possibilities:
the one-holed torus, the three-holed sphere, the two-holed projective plane 
(or cross-surface) or the
one-holed Klein bottle. 
Section 5 ends with a discussion of the construction, due to Danciger-Gu\'eritaud-Kassel, 
of proper affine actions of right-angled Coxeter groups in higher dimensions.
Similar in spirit to the 
case of Margulis spacetimes, these proper actions come from 
certain contracting deformations of hyperbolic and pseudo-hyperbolic reflection orbifolds.

Section 6 discusses 
other directions in higher dimensional affine geometry.
As in dimension three, 
a general approach to Auslander's conjecture involves classifying which 
groups can arise as Zariski closures
of the linear holonomy group.
Say that a connected subgroup $G\subgroup\GLn$ is {\em Milnor\/} if
no proper affine action of $\Ft$ 
with Zariski closure of the linear part equal to 
$G$ exists. 
Margulis's original result can be restated 
by saying 
that $\SOoto$ is {\em not\/} Milnor and it 
is the groups $G$ which are not Milnor which must be examined in order to study the 
Auslander conjecture.
Smilga gives a general sufficient condition for a linear representation of a 
semisimple Lie group to be non-Milnor. For example, the adjoint representation 
of a non-compact semisimple Lie group is not Milnor. 
Some other known results in higher dimensions are discussed in \S 6, concluding 
with a summary of the current state of Auslander's conjecture and a brief 
discussion of the proof of Abels-Margulis-Soifer for dimension $< 7$.

\subsection*{Acknowledgements}
 The authors thank Fran\c cois Gu\'eritaud, and the two anonymous referees for their extensive and 
excellent comments that helped to improve this manuscript.
Goldman also expresses thanks to the Clay Institute for Mathematical Sciences, 
Institute for Computational and Experimental Research in Mathematics (ICERM), 
and the Mathematical Sciences Research Institute (MSRI) 
where this paper was completed.

\section*{Notations and terminology}
We always work over the field $\R$ of real numbers, unless otherwise noted. 
Finitely generated free groups of rank $n \geq 1$  are denoted $\Fn$.
Discrete groups will be assumed to be finitely generated,
unless otherwise indicated. 
Denote the group of isometries of a space $X$ by $\Isom(X)$. 
If $G\subgroup \GL(N)$ is a matrix group, denote its Zariski closure (algebraic hull) by 
$\ZClosure{G} \subgroup \GL(N)$.

If $G$ is a group and  $S\subset G$, denote by $\langle S\rangle$ 
the subgroup of $G$ generated by $S$. 
Similarly, denote the cyclic group generated by an element $A\in G$ by $\langle A\rangle$. 

Denote the group of inner automorphisms of a group $G$ by $\Inn(G)$.
Denote the cohomological dimension of a group $\Gamma$ by
$\cd(\Gamma)$.
Denote the identity component of a topological group $G$ by $G^0$.

\subsection*{Vector spaces and affine spaces}
Let $\A$ be an affine space. 
The 
(simply transitive) group of translations of $\A$ is a vector space $\V$ called the {\em vector space underlying $\A$}.
We denote the group of linear automorphisms of $\V$ by
$\GL(\V)$ and the group of affine transformations of $\A$ by $\Aff(\A)$.
Let $o \in \A$ be a choice of basepoint. Then each element $g \in \Aff(\A)$ is given by a pair  $(A,\vb)$, 
where $A\in\GL(\V)$ is the {\em linear part\/} 
and $\vb\in\V$ is the {\em translational part:\/}
\[ g(x) = o +  A(x-o) + \vb.\]
We will henceforth suppress the basepoint $o \in \A$ and identify $\V$ with $\A$ via the map $v \in \V \mapsto o + v \in \A$. When $\dim~\A = \dim~\V = n$, we often further identify $\V$ with $\R^n$, and write $\A = \A^n$ to denote the affine space of $\V = \R^n$. Then, $\GL(\V)$ identifies with invertible $n\times n$ (real) matrices.
Writing $A = \LL(g)$ and $\vb = \uu(g)$, the affine transformation $g$ is the composition of an $n\times n$ matrix $\LL(g)$ and a translation $\uu(g)$ acting on $\A^n$:
\[
g(x) = \LL(g) (x) + \uu(g) 
\]
The linear part of $g$ identifies with the differential 
\[ \R^n \cong \T_x \A^n \xrightarrow{~\D_xg~} \T_{g(x)} \A^n \cong \R^n \]
of $g$, for every $x\in\A^n$.

Composing $g,h\in\Aff(\A)$, we find:
\begin{itemize}
\item $\LL$ is a homomorphism of groups: $\LL(g\circ h) = \LL(g) \LL(h)$ ;
\item $\uu$ is a $\V$-valued  $1$-cocycle,
where   $\V$ denotes the $\Aff(\A)$-module defined by $\LL$: 
\[ 
\uu(g\circ h) = \uu(g) + \LL(g) \uu(h). \]
\end{itemize}
 
If $\Gamma\xrightarrow{~\rho~} \Aff(\A)$ 
defines an affine action of $\Gamma$ on an affine space $\A$,
then the linear part $\L:= \LL\circ\rho$ defines a linear representation 
$\Gamma\xrightarrow{~\L~}\GL(\V)$
where $\V$ is the vector space underlying $\A$. 
Fixing $\L := \LL\circ\rho$, 
we say that $\rho$ is an {\em affine deformation\/} of $\L$.
Evidently an affine deformation of $\L$ is determined by the translational part 
$\u := \uu\circ\rho$:
\[ \Gamma \xrightarrow{~\u~}  \V \]
so that, for $x \in \A$: 
\begin{equation}\label{eq:AffineMap} 
\rho(\gamma)(x) = \L(\gamma) x+ \u(\gamma). \end{equation}
This map $\u$ satisfies the {\em cocycle  identity:\/}
\begin{equation}\label{eq:CocycleIdentity}
 \u( \gamma\eta ) = \u(\gamma) + \L(\gamma) \big(\u(\eta)\big) \end{equation}
for $\gamma,\eta \in \Gamma$. 
Denote the vector space of such cocycles $\Gamma\xrightarrow{~\u~} \V$
by $\Zz^1(\Gamma,\V)$.

If $\vv\in\V$, define its {\em coboundary\/} $\delta(\vv)\in \Zz^1(\Gamma,\V)$ 
as:
\[   \gamma \xmapsto{~\delta\vv~}  \vv -  \L(\gamma) \vv \]
Denote the image $\delta(\V)\subgroup \Zz^1(\Gamma,\V)$ by $\B^1(\Gamma,\V)$.
Two cocycles are {\em cohomologous\/} if their difference is a coboundary.

Conjugating an affine representation by translation by $\vv$ preserves the 
linear part but changes the translational part by adding $\delta\vv$.
Thus translational conjugacy classes of affine deformations with fixed linear part $\L$
are cohomology classes of cocycles, comprising the {\em cohomology\/} 
\[ \o{H}^1(\Gamma,\V)  := \Zz^1(\Gamma, V) /
\B^1(\Gamma, V).\]

\section{History and Motivation}
We  
briefly %
review the efforts of nineteenth-century crystallographers leading to Bieberbach's work on Euclidean manifolds and
lattices in $\Isom(\En)$, where we denote by $\En$ the Euclidean $n$-space (i.e.\ the affine space $\A^n$ endowed with a flat Euclidean metric).
Then we discuss extensions of these ideas to affine crystallographic groups,
and the question of Milnor on virtual polycyclicity of discrete groups of 
affine transformations acting properly.
Finally the section ends describing Margulis's unexpected discovery of
proper affine actions of non-abelian free groups.

\subsection{Euclidean crystallography}
In the nineteenth century crystallographers asked which groups of isometries
of Euclidean $3$-space $\Ethree$ can preserve a periodic tiling by polyhedra.
The symmetries of such a tiling form a group $\Gamma$ of isometries of
$\Ethree$ such that the quotient space, or orbit space, $\Gamma\backslash\Ethree$ is 
compact.

This led to a classification of {\em crystallographic space groups,\/} 
independently, by Sch\"onflies and Fedorov in 1891;
compare Milnor~\cite{MR0430101} for a historical discussion.
Since the interiors of the tiles are disjoint, the elements of 
$\Gamma$ cannot accumulate and the group must be discrete 
(with respect to the induced topology).
Henceforth, we  assume $\Gamma$ is discrete.

Define a {\em Euclidean space group\/} to be a discrete subgroup 
$\Gamma \subgroup \Isom(\En)$ satisfying any of the following equivalent properties:
\begin{itemize}
\item The quotient  \[ M = \Gamma\backslash\En\] is compact. 
\item There exists a compact {\em fundamental polyhedron\/} $\Delta\subset\En$
for the action of the group $\Gamma$:
\begin{itemize}
\item The interiors of the images $\gamma(\Delta)$, for $\gamma\in\Gamma$, 
are disjoint;
\item $\En =  \bigcup_{\gamma\in\Gamma} \gamma(\Delta)$.
\end{itemize}
\end{itemize}
Since the subgroup $\Gamma$ is discrete and acts isometrically on $\En$,
its action is {\em proper.\/} In particular the quotient $\Gamma\backslash\En$ is Hausdorff.
When $\Gamma$ is not assumed to be a group of isometries of a metric space,
criteria for a discrete group to act properly become a central issue. 

In 1911-1912 Bieberbach found a general group-theoretic criterion for such groups
in arbitrary dimension. In modern parlance, the discrete cocompact group $\Gamma$ 
is called a  {\em lattice\/}
in $\Isom(\En)$. 
Furthermore,  $\Isom(\En)$ decomposes as a semidirect product
$\R^n \rtimes \o{O}(n)$, where $\R^n$ is the vector space of {\em translations.\/}
Indeed, an affine automorphism is a Euclidean isometry if and only if its linear 
part lies in the orthogonal group $\o{O}(n)$.

Bieberbach showed:
\begin{itemize}
\item $\Gamma\cap \R^n$ is a lattice $\Lambda\subgroup \R^n$;
\item The quotient $\Gamma/\Lambda$ is a finite group, 
mapped isomorphically into $\o{O}(n)$ by $\LL$.
\item Any isomorphism $\Gamma_1 \longrightarrow \Gamma_2$ between 
Euclidean crystallographic groups $\Gamma_1,\Gamma_2\subgroup\Isom(\En)$ is induced
by an {\em affine automorphism\/} $\En \longrightarrow \En$.
\item There are only finitely many isomorphism classes of crystallographic
subgroups of $\Isom(\En)$. 
\end{itemize}

A {\em Euclidean manifold\/} is a flat Riemannian manifold, that is, 
a Riemannian manifold of zero curvature. 
A Euclidean manifold is {\em complete\/} if the  underlying metric  space is complete. 
By the Hopf-Rinow theorem, completeness is equivalent to geodesic completeness.

A torsion-free Euclidean crystallographic group $\Gamma\subgroup \Isom(\En)$ 
acts freely on $\En$, and the quotient $\Gamma\backslash\En$ is a compact complete 
Euclidean manifold.
Conversely, every compact complete Euclidean manifold is a quotient of $\En$ 
by a torsion-free
crystallographic group.  
Bieberbach's theorems have the following geometric interpretation:  %

\begin{itemize}
\item Every compact complete Euclidean manifold 
is a quotient of a flat torus $\Lambda\backslash\En$,
where $\Lambda\subgroup \R^n$ is a lattice of translations, by a finite group
of isometries acting freely on $\Lambda\backslash\En$.
\item Any homotopy equivalence $M_1 \longrightarrow M_2$ of compact complete
Euclidean manifolds is homotopic to an affine diffeomorphism.
\item There are only finitely many affine isomorphism classes of compact complete
Euclidean manifolds in each dimension $n$.
\end{itemize}

\subsection{Crystallographic hulls}\label{sec:CrystallographicHulls}

Bieberbach's theorems provide a satisfactory qualitative picture of compact Euclidean manifolds,
or (essentially) equivalently, cocompact Euclidean crystallographic groups.
Does a similar picture hold for {\em affine crystallographic groups, \/} 
that is discrete subgroups $\Gamma \subgroup \Aff(\A^n)$ which act 
properly and cocompactly on $\A^n$?

Auslander and Markus~\cite{MR0131842} constructed examples of 
{\em flat Lorentzian crystallographic groups\/} $\Gamma$ in dimension $3$  
for which all three Bieberbach theorems directly fail. 
In their examples, the quotients $M^3 = \Gamma\backslash\A^3$ are flat Lorentzian manifolds.
Topologically, these $3$-manifolds are all $2$-torus bundles over $S^1$; 
conversely every torus bundle over the circle admits such a structure.
Their fundamental groups are semidirect products $\Z^2 \rtimes \Z$,
and are therefore {\em polycyclic,\/} that is, iterated extensions of cyclic groups.

More generally, a group is {\em virtually polycyclic\/} if it contains a 
polycyclic subgroup of finite index. 
A {\em discrete\/} virtually solvable group of real matrices is virtually polycyclic.

These examples arise from a more general construction:
namely, $\Gamma$ embeds as a lattice in a closed Lie subgroup $G\subgroup\Aff(\A)$ %. 
with finitely many connected components, 
and  whose identity component $G^0$ acts {\em simply transitively\/} on $\A$.  %

Since $\Gamma^0 := \Gamma\cap G^0$ has finite index in $\Gamma$, 
the flat Lorentz manifold $M^3$ is finitely covered by the homogeneous space
$\Gamma^0\backslash G^0$. 
Necessarily, $G^0$ is simply connected and solvable. 
The group $G^0$ plays the role of the translation group $\R^n$ acting
by translations on $\A^n$.
The group $G$ is called the {\em crystallographic hull\/} in 
Fried-Goldman~\cite{MR689763}.

\subsubsection{Syndetic hulls}\label{sec:SyndeticHulls}
A weaker version of this construction  
was known to H.\ Zassenhaus, H.\ C.\ Wang and L.\ Auslander
(compare Raghunathan~\cite{MR0507234}),
and defined in \cite{MR689763} and improved in Grunewald--Segal~\cite{MR1305981}.

If $\Gamma\subgroup \GLn$ is a solvable group, then a {\em syndetic hull\/} 
for $\Gamma$ is a subgroup $G$ such that:
\begin{itemize}
\item $\Gamma\subgroup G \subgroup \ZClosure{\Gamma}$, 
where we recall that $\ZClosure{\Gamma}\subgroup \GLn$ is the Zariski closure (algebraic hull) of 
$\Gamma$ in $\GLn$;
\item $G$ is a closed subgroup having finitely many connected components;
\item $\Gamma\backslash G$ is compact (although not necessarily Hausdorff).
\end{itemize}
The last condition is sometimes called {\em syndetic,\/} since 
``cocompact'' is usually reserved for subgroups whose coset 
space is compact {\em and Hausdorff.\/} 
(This terminology follows Gottschalk--Hedlund~\cite{MR0074810}.)
Equivalently, $\Gamma \subgroup G$ is syndetic if and only if there exists 
$ K\subset G$ which is compact and meets every left coset
$g\Gamma$, for $g\in G$.

In general, syndetic hulls  fail to be unique. 

\subsubsection{Solvable examples and polynomial structures}
The theory of affine group actions is dramatically different 
for solvable and non-solvable groups.
Milnor~\cite{milnor1977fundamental} 
proved that every virtually polycyclic group admits a proper affine action.
Later Benoist~\cite{MR1316552} found examples of  virtually polycyclic groups
for which no crystallographic affine action exists.
Dekimpe and his collaborators~\cite{MR1422895,MR1439202,MR1676619,MR1757881,MR1774093,
MR1856924,MR1895707,MR1934685}
replace complete affine structures by {\em polynomial structures,\/}
that is, quotients of $\A^n$ by proper actions of discrete subgroups of the
group of polynomial diffeomorphisms $\A^n \longrightarrow \A^n$. 
They show that every virtually polycyclic group admits a polynomial crystallographic action.

Polynomial structures satisfy a suggestive {\em uniqueness property\/}
for affine crystallographic groups similar to the role complete affine structures
play for Euclidean crystallographic groups.
Fried-Goldman~\cite{MR689763} prove that two isomorphic affine crystallographic groups
are {\em polynomially equivalent.\/}

A simple example, seen in Figure~\ref{fig:CompleteTori}, occurs in 
dimension two, where a polynomial diffeomorphism
of degree two,
\begin{align*}
\A^2 &\xrightarrow{~f~} \A^2 \\
(x,y) &\longmapsto (x+y^2, y) \end{align*}
conjugates the affine crystallographic actions of $\Z^2$.
Namely, $f$ conjugates translation $\tau$ by $(u,v)\in\R^2$ to 
the affine transformation
\begin{equation}
f\circ \tau \circ f\inv: p \mapsto \begin{bmatrix}[cc] 
1 & 2 v 
\\ 0 & 1 \end{bmatrix} p +
\begin{bmatrix}[c] 
u + v^2 
\\ v \end{bmatrix}.
\end{equation}
The conjugate $f\V f\inv$ is a simply transitive vector group of  affine transformations, 
where $\V \cong \R^2$ is the group of translations.

For different choices of lattices $\Lambda\subgroup\V$, the group $f\Lambda f\inv$ achieves all 
affine crystallographic actions of $\Z^2$, other than lattices of translations. 
Baues~\cite{MR1796130} showed that the deformation space of {\em marked  complete affine structures\/}
is homeomorphic to $\R^2$. 
The effect of changing the marking is the usual linear action of 
$\GLtZ$, the mapping class group of the torus, on~$\R^2$. 
Compare also Baues-Goldman~\cite{MR2181958}.
These structures were first discussed by Kuiper~\cite{kuiper1953surfaces}.

\begin{figure}{\centering
\subfigure{\includegraphics[scale=.55]{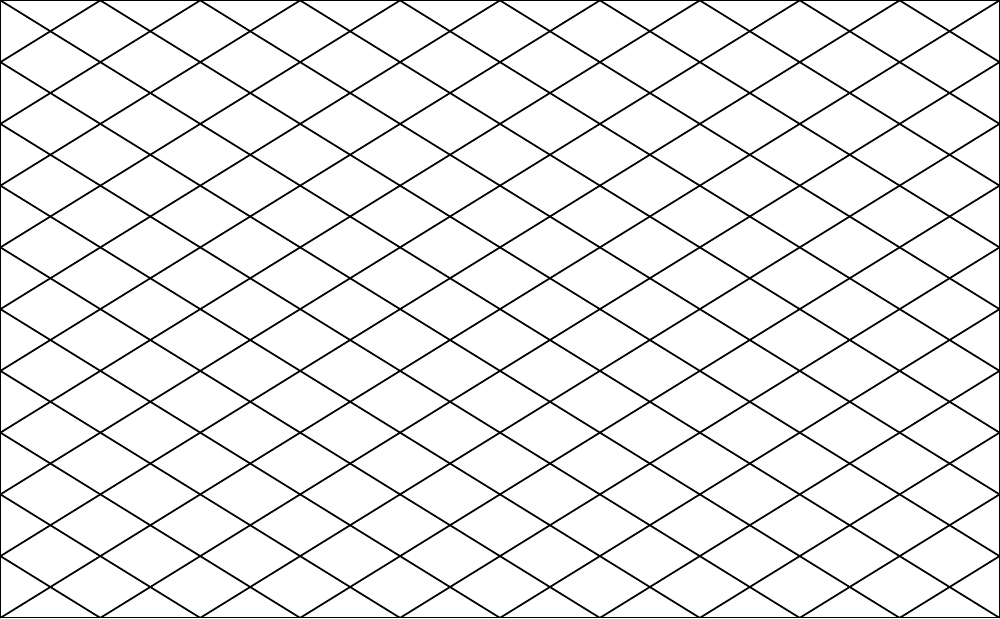}
} \qquad
\subfigure{\includegraphics[scale=.55]{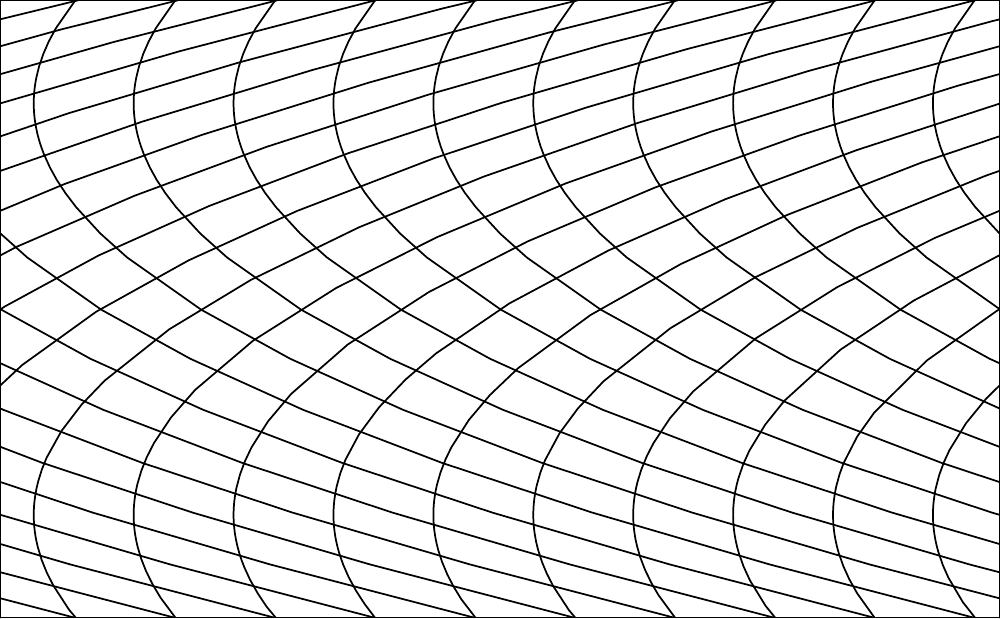}}
}
 \caption{\label{fig:CompleteTori}Tilings corresponding to some complete affine structures on the $2$-torus}
\end{figure}

\subsection{Auslander's conjecture and Milnor's question} 
In \cite{MR0161255},
Auslander asserted that every  discrete subgroup 
$\Gamma\subgroup\Aff(\A^n)$  acting properly and cocompactly on $\A^n$  is virtually solvable.
This was his approach to proving Chern's  conjecture that 
{\em the Euler characteristic of a compact affine manifold vanishes,\/}
in the case the manifold is complete. 
The general theory described in \S\ref{sec:CrystallographicHulls}
implies that if $\Gamma$ is virtually polycyclic,
then $M = \Gamma\backslash\A^n$ has a particularly tractable algebraic structure as a 
{\em solvmanifold,\/} a homogeneous space of a $1$-connected solvable Lie group by a lattice.

Namely, $M$ identifies with the quotient $\Gamma\backslash G$,
where $G$ is a crystallographic hull.
The simply transitive affine action of $G$ on $\A^n$ identifies
$M = \Gamma\backslash\A^n$ with $\Gamma\backslash G$. 
Furthermore,  $M$ admits the finite covering space $(\Gamma\cap G^0)\backslash\A^n$,
which identifies with the solvmanifold $(\Gamma\cap G^0)\backslash G^0$.
This gives a satisfying picture of virtually polycyclic affine crystallographic groups
generalizing Bieberbach's theorem. 
See Grunewald-Segal~\cite{MR1305981} for more details.

Unfortunately, Auslander's proof is incomplete.
His assertion that {\em every affine crystallographic group is virtually polycyclic} 
remains unsolved,
and following Fried-Goldman~\cite{MR689763}, 
has been called the {\em Auslander conjecture.\/}
It is one of the fundamental open questions in the theory of affine manifolds.
The main result of \cite{MR689763} is the proof of this conjecture in dimension $3$.

Vanishing of the Euler characteristic of a complete compact  affine manifold was 
later proved by Kostant-Sullivan~\cite{MR0375341} 
independently of Auslander's conjecture.

\subsubsection{Proper affine actions of $\Ft$}
Affine geometry is significantly more complicated than Euclidean geometry
in that discrete groups of affine transformations need not act properly. 
Suppose 
\[ \Gamma \xhookrightarrow{~\rho~}\Isom(\En) \]  
defines a faithful isometric action of $\Gamma$. 
This action is properly discontinuous (that is, {\em proper\/} with respect to the discrete topology on $\Gamma$) 
if and only if the image of $\Gamma$ is a discrete subgroup of $\Isom(\En)$ 
(that is, $\rho$ is a {\em discrete embedding\/}).
However, if $\rho$ is only affine (that is, the linear part $\L(\Gamma)$ is not assumed to lie in $\On$),
then discrete embeddings do not necessarily define proper actions.

Milnor realized that the assumption of compactness in Auslander's conjecture was 
not necessary to raise an interesting question.
Tits~\cite{MR0286898} proved that every subgroup $\Gamma$ of $\Aff(\A^n)$  of affine transformations is either:
\begin{itemize}
\item  virtually solvable or,
\item contains a subgroup isomorphic to a two-generator free group.
\end{itemize}
If $\Gamma$ is also assumed to be discrete, 
then the first condition of virtual solvability can be strengthened to virtual polycyclicity. 

Milnor then asked whether proper affine actions exist when $\Gamma$ is a 
two-generator free group $\Ft$. 
Nonexistence implies Auslander's conjecture, 
which would result in a satisfying structure theory generalizing the Bieberbach theory. 
Attacking this question requires a criterion  for solvability.

Evidently,
$\Gamma$ is virtually solvable if and only if the {\em Zariski\/} closure $\ZClosure{\rho(\Gamma)}$ 
of $\rho(\Gamma)$ in $\Aff(\A)$ is virtually solvable. 
Since Zariski closed subgroups have finitely many connected components in the 
classical topology, the identity component (in the classical topology) 
$\Big(\ZClosure{\rho(\Gamma)}\Big)^0$ 
is a connected solvable closed (Lie) subgroup,
which has finite index in  $\ZClosure{\rho(\Gamma)}$.
In turn, this is equivalent to  its linear part
\[ G := \LL   \bigg(
\Big(\ZClosure{\rho(\Gamma)}\Big)^0
\bigg) \] 
being a connected solvable closed subgroup of $\GL(\V)$. 

Recall the {\em Levi decomposition:\/}
A connected Lie group is the semidirect product
of a maximal normal solvable connected subgroup, called its \emph{radical\/},
by a semisimple subgroup, called its 
{\em semisimple part\/} or its \emph{Levi subgroup\/}.
In particular, a group is solvable if and only if its semisimple part  is trivial.

Summarizing:

\begin{prop}\label{prop:vsolvable}
Let $\Gamma\hookrightarrow \Aff(\A)$ be an affine representation.
The following conditions are equivalent:
\begin{itemize}
\item $\Gamma$ is virtually solvable;
\item $\L(\Gamma)$ is virtually solvable;
\item $\ZClosure{\L(\Gamma)}$ is virtually solvable;
\item The identity component 
$\big(
\ZClosure{\L(\Gamma)} 
\big)^0$ is solvable;
\item The  semisimple part 
of $\big(
\ZClosure{\L(\Gamma)}\big)^0$ is trivial.
\end{itemize}
\end{prop}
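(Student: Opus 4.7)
The plan is to prove the chain $(1)\Leftrightarrow(2)\Leftrightarrow(3)\Leftrightarrow(4)\Leftrightarrow(5)$, invoking in turn (i) the structure of $\Aff(\A)$ as an extension by the translation subgroup, (ii) stability of solvability under Zariski closure, (iii) finiteness of connected components of real algebraic groups, and (iv) the Levi decomposition. Nothing in the statement requires any hypothesis on $\Gamma$ beyond the affine embedding, so the argument is purely algebraic.

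For $(1)\Leftrightarrow(2)$, I would use the short exact sequence
\[
1 \longrightarrow \Gamma\cap\V \longrightarrow \Gamma \xrightarrow{~\L~} \L(\Gamma) \longrightarrow 1,
\]
whose kernel lies in the translation subgroup $\V$ and is therefore abelian. Virtual solvability passes to quotients, giving $(1)\Rightarrow(2)$; and given a finite-index solvable $H\subgroup\L(\Gamma)$, the preimage $\L^{-1}(H)$ has finite index in $\Gamma$ and is an extension of a solvable group by an abelian one, hence solvable, yielding $(2)\Rightarrow(1)$. For $(2)\Leftrightarrow(3)$, the inclusion $\L(\Gamma)\subgroup\ZClosure{\L(\Gamma)}$ makes $(3)\Rightarrow(2)$ immediate (subgroups of virtually solvable groups are virtually solvable). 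Conversely, if $H\subgroup\L(\Gamma)$ is a finite-index solvable subgroup, then by continuity of the commutator map for the Zariski topology each term in the derived series of $H$ is Zariski-dense in the corresponding term of the derived series of $\ZClosure{H}$, so $\ZClosure{H}$ is solvable; writing $\L(\Gamma)$ as a finite union of cosets of $H$ and taking Zariski closures yields $\big[\ZClosure{\L(\Gamma)}:\ZClosure{H}\big]<\infty$, so $\ZClosure{\L(\Gamma)}$ is virtually solvable.

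For $(3)\Leftrightarrow(4)$, a Zariski-closed subgroup of $\GL(\V)$ has finitely many classical connected components, so $\big(\ZClosure{\L(\Gamma)}\big)^0$ has finite index in $\ZClosure{\L(\Gamma)}$, making the two virtual-solvability conditions equivalent. A connected Lie group $G$ that is virtually solvable is in fact solvable: the classical closure of a finite-index solvable subgroup is solvable (commutators are continuous), of finite index, hence closed of finite index in a connected group, hence all of $G$. Finally, $(4)\Leftrightarrow(5)$ is the Levi decomposition applied to $\big(\ZClosure{\L(\Gamma)}\big)^0$: a connected Lie group is solvable if and only if its semisimple Levi part is trivial. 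The main potential obstacle is the step in $(2)\Rightarrow(3)$ asserting that $\ZClosure{H}$ inherits solvability from $H$; this is standard in characteristic zero but is the only place the argument goes beyond group-theoretic bookkeeping and needs genuine input from real algebraic geometry.
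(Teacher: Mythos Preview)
Your proof is correct and follows essentially the same route as the paper. The paper does not give a formal proof after the proposition; instead, the discussion immediately preceding it already sketches the same four ingredients you use --- the abelian kernel of $\LL$, stability of solvability under Zariski closure, finiteness of connected components of a real algebraic group, and the Levi decomposition --- and then states the proposition as a summary. You have simply supplied the details of each step; the one remark worth making is that the fact $\ZClosure{H}$ is solvable when $H$ is solvable holds for linear algebraic groups over any field, not just in characteristic zero.
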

\noindent 
This raises the question of which groups can arise as semisimple parts
of $\big(\ZClosure{\L(\Gamma)}\big)^0$ for a proper affine action
$\Gamma\xrightarrow{~\rho~}\Aff(\A)$ where $\Gamma$ is {\em not\/} virtually solvable. 
Following Smilga, 
we say that a closed connected subgroup $G < \GL(\V)$ is {\em Milnor\/}  if no such proper affine deformation with 
\[ \big(\ZClosure{\L(\Gamma)}  \big)^0 = G\]
exists. 
By Tits~\cite{MR0286898} , 
we can replace $\Gamma$ in the above definition by the two-generator free group $\Ft$.

\begin{defn}\label{def:Milnor}
Let $\rho: G \to \GL(\V)$ be a linear representation of an algebraic group 
$G$ on a vector space $\V$. Then $G \ltimes_\rho \V$ acts 
affinely on $\V$. We call $\rho$ \emph{Milnor} if there does not exist a 
subgroup $\Gamma < G \ltimes_\rho \V$ which is isomorphic to a non-abelian 
free group, has linear part $\L(\Gamma) < G$ Zariski dense, and which acts properly 
discontinuously on $\V$.
\end{defn}

A solvable subgroup is (trivially) Milnor.
Similarly, Bieberbach's structure theorem implies compact groups are Milnor.
Thus Milnor's question can be rephrased as whether non-Milnor subgroups exist.

In fact, even many non-solvable subgroups, for example $\GL(\V)$ and $\SL(\V)$, 
are easily seen to be Milnor from the following
(see Proposition~\ref{prop:EigenvalueOne} below).

\begin{lemma}\label{lem:OneEigenvalue}
Suppose that $\Gamma\xrightarrow{~\rho~}\Aff(\A)$ defines a free
action on $\A$. 
Then every element of $
\ZClosure{\L(\Gamma)}$ has $1$ as an eigenvalue.
\end{lemma}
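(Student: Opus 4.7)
The plan is to reduce the statement to the elementary fact that an affine transformation without fixed points must have $1$ as an eigenvalue of its linear part, and then upgrade to the Zariski closure using that ``having $1$ as an eigenvalue'' is a Zariski-closed condition.

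First, I would unpack what a free action means: for every $\gamma \in \Gamma \setminus \{e\}$, the affine transformation $\rho(\gamma)$ has no fixed point in $\A$. Writing $\rho(\gamma)(x) = A x + \vb$ with $A = \L(\gamma)$ and $\vb = \u(\gamma)$ in the notation of \eqref{eq:AffineMap}, the fixed point equation $\rho(\gamma)(x) = x$ becomes $(I - A) x = \vb$. The absence of a solution forces $I - A$ to fail to be surjective, hence in the finite-dimensional setting to be singular, which is exactly the condition $\det(I - A) = 0$, i.e.\ $1$ is an eigenvalue of $A$. The identity element trivially has $1$ as an eigenvalue, so every element of $\L(\Gamma)$ lies in the locus
\[
Z := \{ A \in \GL(\V) : \det(I - A) = 0 \}.
\]

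Next, I would observe that $Z$ is cut out by a single polynomial equation in the matrix entries of $A$, and is therefore Zariski closed in $\GL(\V)$. Since $\L(\Gamma) \subset Z$ and $Z$ is Zariski closed, we conclude $\ZClosure{\L(\Gamma)} \subset Z$, which is precisely the statement that every element of $\ZClosure{\L(\Gamma)}$ has $1$ as an eigenvalue.

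There is no real obstacle here; the only subtlety to be mindful of is the correct formulation of freeness (no fixed points for nontrivial elements, which is stronger than mere faithfulness) and the need to handle $\gamma = e$ separately, since the argument via the fixed-point equation only applies to nontrivial elements. The passage to the Zariski closure is then immediate from the polynomial nature of the eigenvalue condition.
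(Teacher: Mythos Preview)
Your proof is correct and follows essentially the same approach as the paper: first observe that an affine map acting without fixed points must have $1$ as an eigenvalue of its linear part (since $(I-A)x = \vb$ has no solution forces $\det(I-A)=0$), then pass to the Zariski closure using that this eigenvalue condition is polynomial. The paper states these two observations slightly more tersely, but the content is identical.
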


\noindent
This lemma was first used by Kostant-Sullivan~\cite{MR0375341} in the proof that the
Euler characteristic of a compact complete affine manifold vanishes.

Lemma~\ref{lem:OneEigenvalue}  follows from two elementary observations:
\begin{itemize}
\item 
If $g\in\Aff(\A)$ acts freely on $\A$,  then $\L(g)\in\GL(\V)$ has $1$ as an eigenvalue;
\item
The condition that $A\in\GL(\V)$ has $1$ as an eigenvalue,
namely that $\det(A - \Id) = 0$ is a polynomial condition on $A$ and thus passes
to the Zariski closure.
\end{itemize}

\noindent Summarizing:
\begin{prop}\label{prop:EigenvalueOne} 
Suppose that $G\subgroup\GL(\V)$ is not Milnor.
Then every element of $G$ has $1$ as an eigenvalue.
\end{prop}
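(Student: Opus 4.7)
The plan is to essentially read Proposition~\ref{prop:EigenvalueOne} off Lemma~\ref{lem:OneEigenvalue} after unpacking the definition of \emph{Milnor}.

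First I would invoke the hypothesis: since $G \subgroup \GL(\V)$ is not Milnor, Definition~\ref{def:Milnor} guarantees the existence of a subgroup $\Gamma < G \ltimes \V$ isomorphic to a non-abelian free group, with linear part $\L(\Gamma) < G$ Zariski dense, which acts properly discontinuously on $\V$. The key observation is that a free group is in particular torsion-free, so every non-identity element of $\Gamma$ has infinite order, and a properly discontinuous action of a torsion-free discrete group on a Hausdorff space is automatically free (any fixed point would force a compact orbit for the cyclic subgroup it generates, contradicting properness combined with infinite order).

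Next I would apply Lemma~\ref{lem:OneEigenvalue} directly to this free affine action $\Gamma \hookrightarrow \Aff(\V)$: every element of $\ZClosure{\L(\Gamma)}$ has $1$ as an eigenvalue. Since $\L(\Gamma)$ is Zariski dense in $G$ by hypothesis, we have $\ZClosure{\L(\Gamma)} = G$, and so every element of $G$ has $1$ as an eigenvalue.

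The only step that requires any care is the passage from proper discontinuity to freeness, and this is standard: if some $\gamma \in \Gamma \setminus \{1\}$ fixed a point $x \in \V$, then the compact neighborhood $\{x\}$ would meet its $\langle \gamma \rangle$-translates in an infinite set (as $\gamma$ has infinite order in the free group), contradicting properness. Everything else is a direct citation of prior results in the excerpt, so I do not anticipate a serious obstacle; the proposition is essentially a packaged restatement of Lemma~\ref{lem:OneEigenvalue} in the language of Definition~\ref{def:Milnor}.
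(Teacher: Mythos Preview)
Your proof is correct and follows exactly the approach the paper intends: the proposition is presented there simply as a summary (``Summarizing:'') of Lemma~\ref{lem:OneEigenvalue} combined with Definition~\ref{def:Milnor}, and you have filled in precisely the connecting steps, including the standard observation that a properly discontinuous action of a torsion-free group is free.
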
 
       
\subsubsection{Complete affine $3$-manifolds}

Fried--Goldman~\cite{MR689763} classify which connected semisimple subgroups $G$ 
can arise as semisimple parts of $\ZClosure{\L(\Gamma)}$, 
when $\dim\,\V = 3$. 
It follows from their work that the only connected semisimple subgroup $G\subgroup\GLthreeR$ 
which is not Milnor is $G= \SOoto$. We recall the argument here.

By an easy 
calculation, 
the only connected semisimple subgroups of $\GLthreeR$ are (up to conjugacy):
\begin{itemize}
\item $\SL(3,\R)$;
\item $\o{SO}(3)$;
\item $\SL(2,\R)$;
\item $\SOoto$,
\end{itemize}
embedded in the standard ways.
 We have already excluded the case $G= \SL(3,\R)$.
The case $G=\o{SO}(3)$ is excluded by the Bieberbach theorems 
(since $\L(\Gamma)$ must be a finite group, $G$ is trivial).

Suppose $G = \SL(2,\R)$.
Then $\L(\Gamma)$ can be conjugated into one of the forms:
\[
\bmatrix   * & * & * \\ 0 & * & * \\ 0 & * & * \endbmatrix, \ 
\bmatrix   * & 0 & 0 \\ * & * & * \\ * & * & * \endbmatrix 
\]
The condition that these matrices have $1$ as an eigenvalue implies that
the $(1,1)$-entry equals $1$. 

In the first  case, the vector field $\partial/\partial x$ is a 
$\Gamma$-invariant parallel vector field which descends to a parallel vector field on $M$.
In the second case, the $1$-form $dx$ is a $\Gamma$-invariant parallel $1$-form
which descends to a parallel $1$-form on $M$. 

These cases are eliminated as follows.
A parallel $1$-form can be perturbed to have rational periods, and integrates to give 
a fibration of $M$ over $S^1$ with fibers closed complete affine $2$-manifolds,
from which the virtual solvability follows by the $2$-dimensional case. 
In the case of a parallel vector field $\xi$, the Zariski density implies the existence 
of two elements $\gamma_1,\gamma_2$ which generate a nonabelian free group, and correspond to
closed orbits of the flow of $\xi$. These closed orbits are hyperbolic in the sense of 
hyperbolic dynamics but their stable manifolds intersect (by lifting them to $\A$),
which is a contradiction.  
See Fried-Goldman~\cite{MR689763} for further details.

Finally, consider the most interesting case, namely $G = \SOoto$. 
Margulis's breakthrough~\cite{margulis1983free}, \cite{margulis1987complete}
may be restated that $\SOoto$ is {\em not\/} Milnor. 
Thus $\SOoto$ was the first example of a non-Milnor group.
Suppose $M^3 = \Gamma\backslash\A^3$ is a complete affine $3$-manifold whose fundamental 
group $\Gamma$ is nonsolvable.
By the above, $\ZClosure{\L(\Gamma)}$ is (conjugate to) $\SOoto$.
Hence, the $\Oto$-invariant inner product on $\V^3$ defines a flat Lorentzian 
metric on $\A^3$ invariant under the action of %
$\Gamma$. 
When equipped with this metric, we denote the affine space by $\Eto$. 
Hence, $M^3 = \Gamma\backslash\Eto$ inherits a {\em flat Lorentzian structure\/} 
from the $\SOto$-invariant Lorentzian
inner product on $\V$.
Such a complete flat Lorentzian three-manifold $M^3$ is called a {\em Margulis spacetime.\/}

\subsection{The associated hyperbolic surface}

\begin{prop}[Fried-Goldman~\cite{MR689763}]\label{thm:DiscreteEmbedding}
Suppose that $\Gamma\subgroup\Isom(\Eto)$ is discrete, and acts properly on $\Eto$.
Either $\Gamma$ is virtually polycyclic or 
$\Gamma \xrightarrow{~\L~} \Oto$ is an isomorphism of $\Gamma$
onto a discrete subgroup of $\SOto < \Oto$.
\end{prop}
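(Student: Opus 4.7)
The plan is to assume $\Gamma$ is not virtually polycyclic and to establish in three stages that (i) $\L|_\Gamma$ is injective, (ii) $\L(\Gamma)$ is discrete in $\Oto$, and (iii) $\L(\Gamma) \subseteq \SOto$.

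For injectivity, I would examine the translation kernel $N := \Gamma \cap \ker\L = \Gamma \cap \R^3$. It is normal in $\Gamma$ and, by discreteness of $\Gamma$, a discrete subgroup of $\R^3$, hence free abelian of some rank $k \leq 3$. If $N \neq \{0\}$, its real span $W := \R\cdot N \subsetneq \R^3$ is $\L(\Gamma)$-invariant via the conjugation identity $\gamma \tau_v \gamma^{-1} = \tau_{\L(\gamma)v}$, and consequently invariant under $\ZClosure{\L(\Gamma)}$. By Lemma~\ref{lem:OneEigenvalue} combined with the classification of connected semisimple subgroups of $\GLthreeR$ recalled in the excerpt, the semisimple part of $(\ZClosure{\L(\Gamma)})^0$ must lie in $\{1,\SOoto,\SL(2,\R),\SO(3)\}$. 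The invariant proper subspace $W$ rules out $\SOoto$ (whose standard representation on $\R^3$ is irreducible); $\SO(3)$ is excluded by the Bieberbach-finiteness argument reviewed earlier; and $\SL(2,\R)$ by the parallel vector field / $1$-form argument. Hence the semisimple part is trivial, so $\L(\Gamma)$ is virtually solvable, and therefore $\Gamma$ — an extension of the abelian $N$ by the virtually solvable $\L(\Gamma)$ — is itself virtually solvable, hence (being discrete) virtually polycyclic, contradicting the hypothesis. So $N = \{0\}$.

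For discreteness of $\L(\Gamma)$, suppose $\overline{\L(\Gamma)} \subseteq \Oto$ has nontrivial identity component $H^0$. If $H^0$ is a proper subgroup of $\SOoto$, then it is either elliptic, hyperbolic, or parabolic, fixing a timelike, spacelike, or null line $\ell \subset \R^3$ respectively; since $\L(\Gamma)$ normalizes $H^0$, it preserves $\ell$ as well, producing a proper $\L(\Gamma)$-invariant subspace and reducing to the situation of Stage~1. In the remaining case $H^0 = \SOoto$, we can find nontrivial $\gamma_n \in \Gamma$ with $\L(\gamma_n) \to 1$; by injectivity of $\L$ and discreteness of $\Gamma$ in $\Isom(\Eto)$, $|\uu(\gamma_n)| \to \infty$. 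Here the Lorentzian structure is used decisively: for $\L(\gamma_n)$ a small elliptic rotation fixing a timelike line $\ell_n$, the iterate $\gamma_n^k$ has translational part $\sum_{i<k} \L(\gamma_n)^i \uu(\gamma_n)$, whose projection onto the spacelike plane orthogonal to $\ell_n$ oscillates in a bounded region while its timelike component grows linearly in $k$. A careful choice of $\gamma_n$ and $k$ then returns orbit points to a bounded subset of $\Eto$ infinitely often, violating properness. Hence $\L(\Gamma)$ is discrete.

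For the image to lie in $\SOto$, note first that $\Gamma$ is torsion-free: any $\gamma \in \Gamma$ of finite order would have $\L(\gamma)$ of finite order in $\Oto$, hence with $1$ as an eigenvalue (cf.\ Lemma~\ref{lem:OneEigenvalue}), and a short linear-algebra computation converts this to a genuine fixed point of $\gamma$ on $\Eto$, contradicting freeness (ensured by discreteness and properness). Now if some $\gamma \in \Gamma$ had $\det \L(\gamma) = -1$, then the eigenvalue constraints in $O(2,1)$ together with the $1$-eigenvalue of Lemma~\ref{lem:OneEigenvalue} would force $\L(\gamma)^2 = 1$, so $\gamma^2$ would be a pure translation in $N = \{0\}$, giving $\gamma^2 = 1$ and contradicting torsion-freeness. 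Combined with the injectivity from Stage~1 and the discreteness from Stage~2, this gives the isomorphism of $\Gamma$ onto a discrete subgroup of $\SOto$. The main obstacle is Stage~2: discreteness of $\Gamma$ in $\Isom(\Eto)$ does not by itself imply discreteness of $\L(\Gamma)$ in $\Oto$, so the proof must genuinely exploit how properness of the affine action interacts with the Lorentzian fixed-line structure of small rotations in $\SOoto$.
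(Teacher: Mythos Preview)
The paper does not actually prove this proposition; it is simply attributed to Fried--Goldman and then used. So there is no ``paper's own proof'' to compare against, and your outline must stand on its own.

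Your Stages~1 and~3 are essentially sound. In Stage~1 you should also dispose of the case $\operatorname{rank} N = 3$ (where $W = \R^3$ is not proper): then $\L(\Gamma)$ preserves a lattice, hence lies in a conjugate of $\mathsf{GL}(3,\Z)\cap\Oto$, which is discrete, and the quotient $\Gamma\backslash\Eto$ is compact, so the 3--dimensional Auslander theorem applies. In Stage~3 your claim that $\det\L(\gamma)=-1$ together with the $1$--eigenvalue forces $\L(\gamma)^2=1$ is correct (a short case check on the $1$--eigenvector being spacelike, timelike, or null), and then $\gamma^2\in\ker\L\cap\Gamma=\{1\}$; one then observes that any affine involution has a fixed point, contradicting freeness. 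The torsion--free reduction you invoke is exactly what the paper does in the sentence following the proposition, via Selberg's lemma.

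The genuine gap is your Stage~2 in the case $H^0=\SOoto$. Iterating a single element $\gamma_n$ with small elliptic linear part does \emph{not} produce orbit points returning to a bounded set: the timelike component of $\uu(\gamma_n^k)$ grows linearly in~$k$, so each cyclic subgroup $\langle\gamma_n\rangle$ acts properly by itself, and you have given no mechanism for different $\gamma_n$'s to interact. The sentence ``a careful choice \dots\ returns orbit points to a bounded subset'' is where the argument breaks down, and you yourself flag this as the main obstacle.

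The standard repair is a Zassenhaus--type commutator argument rather than a power argument. Choose $\gamma_1,\gamma_2\in\Gamma$ with $\L(\gamma_1),\L(\gamma_2)$ close to~$1$ but generating a non--nilpotent subgroup of $\SOoto$ (possible by density). For iterated commutators $c_1=[\gamma_1,\gamma_2]$, $c_{n+1}=[c_n,\gamma_1]$, one has $\|\L(c_n)-I\|\lesssim\epsilon^{n+1}$ while the translational part satisfies the estimate
\[
\uu([a,b])\ \approx\ (I-\L(b))\,\uu(a)\ -\ (I-\L(a))\,\uu(b),
\]
which inductively gives $\|\uu(c_n)\|\lesssim n\,\epsilon^{n}\max(\|\uu(\gamma_1)\|,\|\uu(\gamma_2)\|)\to 0$. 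Thus $c_n\to 1$ in $\Isom(\Eto)$ with infinitely many $c_n$ distinct (non--nilpotence of the linear parts), contradicting discreteness of~$\Gamma$. This replaces your single--element iterate and closes the gap.
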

\noindent
Selberg~\cite{MR0130324} proved every finitely generated matrix group contains a 
torsion-free subgroup of finite index 
(compare Raghunathan~\cite{MR0507234}, Corollary~6.13).
Thus $\Gamma$ contains a finite index subgroup containing no elliptic elements. 
Henceforth, we restrict to torsion-free discrete subgroups.

Hyperbolic geometry enters here, as
$\SOto$ is the isometry group of $\Ht$  and every discrete subgroup
of $\SOto$ acts properly on $\Ht$.
The quotient 
\[ \Sigma^2 := \L(\Gamma)\backslash\Ht\]
is a complete hyperbolic surface. 
Since $M^3$ and $\Sigma^2$ are both quotients of contractible spaces by proper and free actions of $\Gamma$, 
$M^3$ and $\Sigma^2$ are homotopy equivalent.
We call $\Sigma^2$ the {\em hyperbolic surface associated to $M^3$,\/}
and $M^3$ an {\em affine deformation\/} of $\Sigma^2$. 

\subsubsection{Margulis spacetimes are not closed}\label{sec:CohomologicalDimension}

Note that $M^3$ can not be compact by the following cohomological dimension argument. 
If $M$ were compact then
\[
2 = \dim(\Sigma)  \ge \cd\big(\L(\Gamma)\big) = \cd (\Gamma) = \dim(M^3) = 3 . \]
This contradiction completes the proof of Auslander's conjecture in dimension $3$.

Using similar arguments,
Goldman--Kamishima~\cite{MR739789} proved Auslander's conjecture 
for flat Lorentzian manifolds \big(linear holonomy in $\OnOne$\big) 
and Grunewald--Margulis~\cite{MR1075720} proved Auslander's conjecture 
for affine deformations for which the linear holonomy lies in other rank $1$ subgroups.
See also Tomanov~\cite{MR1062291,MR1072918}.

Around 1990, Mess proved:

\begin{thm}[Mess~\cite{MR2328921}]\label{thm:Mess}
The fundamental group of a closed surface admits no proper affine action on $\A^3$.
\end{thm}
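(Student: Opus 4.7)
The plan is to assume for contradiction that $\Gamma=\pi_1(S)$ for a closed surface $S$ admits a proper affine action on $\A^3$. Since $\pi_1(S)$ is virtually solvable when $S$ is a sphere, torus, or Klein bottle, the interesting case (and the only one relevant in the Margulis-spacetime context, where the surface group is required to be nonsolvable) is when $S$ is closed of genus $g\ge 2$; I concentrate on that case. By Proposition~\ref{thm:DiscreteEmbedding} the linear part $\L:\Gamma\to\SOto$ is an isomorphism onto a discrete subgroup, and $\Sigma:=\L(\Gamma)\backslash\Ht$ is a complete closed hyperbolic surface homotopy equivalent to $S$. My strategy is to combine the Opposite Sign Lemma with the dictionary between affine deformations and infinitesimal deformations of the hyperbolic structure, and derive a contradiction from the Gauss--Bonnet theorem.

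The dictionary goes as follows. Under the low-dimensional coincidence $\R^3\cong\soto$ of $\SOto$-modules recalled in the introduction, the translational cocycle $\u\in Z^1(\Gamma,\R^3)$ becomes a class $[\u]\in H^1(\Gamma,\soto)$, i.e., a tangent vector at $[\L]$ to the $\SOto$-character variety of $\Gamma$. For closed surfaces this is precisely the tangent space to Teichm\"uller space, so $[\u]$ integrates to a smooth path $\L_t$ of Fuchsian representations with $\L_0=\L$; set $\Sigma_t:=\L_t(\Gamma)\backslash\Ht$. The classical first-variation identity
\[
 \alpha(\gamma)\;=\;\frac{d}{dt}\bigg|_{t=0}\ell_t(\gamma),
\]
where $\ell_t(\gamma)$ is the hyperbolic length of $\gamma$ in $\Sigma_t$, now relates the Margulis invariant to the length variation along the deformation. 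The Opposite Sign Lemma lets me assume, after replacing $\u$ by $-\u$ if necessary, that $\alpha(\gamma)>0$ for every nontrivial $\gamma$. Properness provides the crucial uniform upgrade: by the Goldman--Labourie--Margulis criterion, $\alpha/\ell$ extends continuously to the compact space of projectivized geodesic currents on $\Sigma$ and is bounded below there by some $\epsilon>0$.

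The contradiction comes from evaluating this extension on the Liouville current $L=L_0$ of $\Sigma$. The uniform lower bound gives $\alpha(L)\ge\epsilon\,\ell(L)>0$. On the other hand, Bonahon's intersection interpretation of length functions gives $\ell_t(c)=i(L_t,c)$ for every current $c$, where $L_t$ denotes the Liouville current of $\Sigma_t$; in particular $i(L_t,L_t)$ is a universal constant times $\mathrm{Area}(\Sigma_t)=2\pi(2g-2)$, and hence independent of $t$. Differentiating the variational formula $\alpha(c)=\frac{d}{dt}|_{0}\,i(L_t,c)$ at the fixed current $c=L$ and using symmetry of $i$, one obtains
\[
 \alpha(L)\;=\;\frac{d}{dt}\bigg|_{0} i(L_t,L)\;=\;\frac{1}{2}\,\frac{d}{dt}\bigg|_{0} i(L_t,L_t)\;=\;0,
\]
contradicting the strict positivity above.

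The main obstacle I anticipate is precisely this last identification. It requires verifying that the GLM continuous extension of the Margulis invariant from hyperbolic conjugacy classes to arbitrary geodesic currents genuinely computes the first variation $\frac{d}{dt}|_{0}\ell_t(c)$ in the current variable $c$, and in particular at the Liouville current $L$; some care is also needed because $L$ itself depends on $t$, which is exactly why the factor $\tfrac{1}{2}$ appears. Once this variational compatibility is established, the rest of the argument is formal: Opposite Sign produces a definite sign, properness promotes it to a uniform bound on the compact space of currents, and topological invariance of the area forces the invariant to vanish at $L$.
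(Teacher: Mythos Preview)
Your proposal is correct and follows essentially the same approach as the paper (see the paragraph immediately after Proposition~\ref{prop:PropernessLengths}): the Goldman--Labourie--Margulis properness criterion, reinterpreted via Lemma~\ref{lem:length-formula} as a length-derivative condition, forces the diffused Margulis invariant to be nonzero on every current, but it vanishes on the Liouville current $L_0$ since the length of $L_0$ is stationary at $\Sigma_0$. Your Gauss--Bonnet computation of this vanishing via constancy of $i(L_t,L_t)$ is a minor variant of the paper's observation that the length of $L_0$ is minimized at $\Sigma_0$; both produce $\dd\ell_{L_0}(\u)=0$.
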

\noindent
In particular, $\Sigma$ is not a closed surface and $\Gamma_0$ is not a uniform lattice.
In fact,  $\Gamma$ must be a free group:
every Margulis spacetime is an affine deformation of a 
{\em noncompact\/}  complete hyperbolic surface $\Sigma$.

In 1999, Goldman-Margulis~\cite{MR1796129}  gave alternate proofs of Theorem~\ref{thm:Mess}; 
compare the discussion in \S\ref{sec:MargulisLength}.
Later, Labourie~\cite{MR1909247}  and Danciger-Zhang~\cite{dz} 
generalized Mess's theorem to show that for a certain class of linear surface 
group representations, called {\em Hitchin representations,} 
affine deformations are never proper. 
On the other hand, proper affine actions by surface groups do exist by recent 
work of Danciger--Gu\'eritaud--Kassel, see Theorem~\ref{thm:racg}.
(Compare \cite{dgk-racg,Smi16b}.)

\subsubsection{Affine deformations of hyperbolic surfaces}
Sections 2--5 discuss the geometry and classification of Margulis spacetimes.
To facilitate this discussion, we use Proposition~\ref{thm:DiscreteEmbedding} to recast
these questions in a more convenient form.

Suppose that $M^3 = \Gamma\backslash\Eto$ is a Margulis spacetime,
where $\Gamma\subgroup \Isom^+(\Eto)$ is a discrete subgroup acting properly on $\Eto$.
By Proposition~\ref{thm:DiscreteEmbedding}, 
we can assume the linear holonomy group $\L(\Gamma)$ is a Fuchsian subgroup 
$\Gamma_0 \subgroup \SOto$. 

Fix $\Gamma_0$ and consider $\Gamma$ as an affine deformation of $\Gamma_0$.
Affine deformations of $\Gamma_0$ are determined by the translational part
$\Gamma_0 \xrightarrow{~\u~} \Rto$, 
and we denote the affine deformation determined by the cocycle $\u$ by $\Gamma_\u$.
In particular, the zero cocycle determines $\L(\Gamma)$,
so the notation for $\Gamma_0 = \L(\Gamma)$ is consistent.
Translational conjugacy classes of affine deformations form the vector space 
$\HHh^1(\Gamma_0,\Rto)$, which has dimension $3(r-1)$ if $\Gamma_0$ is a free group
of rank $r>1$. 

More geometrically,  consider $M^3$ to be an affine deformation of the hyperbolic surface
$\Sigma$. Then identify $\HHh^1(\Gamma_0,\Rto)$ with the cohomology
$\HHh^1(\Sigma,\V)$, where $\V$ denotes the local system (flat vector bundle) over $\Sigma$
determined by the linear holonomy homomorphism
\[
\pi_1(\Sigma) \xrightarrow{~\cong~} \Gamma_0 \subgroup \Isom(\Rto) = \Oto. \]
The main goal now becomes determining which elements of the
vector space $\HHh^1(\Gamma_0,\Rto)$ determine {\em proper affine deformations.\/}
This was foreshadowed in Milnor~\cite{{milnor1977fundamental}}, 
where he proposed a possible way of constructing proper affine actions of 
non-virtually solvable groups:
\begin{quote}{\sl ``Start with a free discrete subgroup of $\Oto$ and add 
translation components to obtain a group of affine transformations which  acts 
freely.                                                                         
However it seems difficult to decide whether the resulting group action is 
properly discontinuous.''}
\end{quote}
In retrospect, these are the only ways of constructing such actions in dimension three.

\section{Construction of Margulis spacetimes}
We turn now to a direct construction of Margulis spacetimes via fundamental 
domains bounded by piecewise linear surfaces called \emph{crooked planes}. 
While Margulis's original examples were constructed from a dynamical point of view 
(we defer discussion of his original proof until Section~\ref{sec:OriginalProof}), 
crooked fundamental domains bring a geometric perspective to the subject. Their 
introduction by Drumm in 1990 launched a classification program for Margulis spacetimes 
that was completed recently by Danciger-Gu\'eritaud-Kassel 
(see Section~\ref{sec:strips-and-CPs}).

Continuing with the discussion of the previous section, we consider affine 
deformations of Fuchsian subgroups $\Gamma_0 < \SOto$.
Associate to $\Gamma_0$  a complete hyperbolic surface $\Sigma = \Gamma_0\backslash\Ht$. In order for an affine deformation to have a chance at being proper, 
$\Sigma$ must be non-compact, and we assume this going forward.
The section will give a brief overview of the fundamentals of crooked geometry leading to a discussion of Drumm's theorem that every complete non-compact hyperbolic surface $\Sigma$ admits a proper affine deformation as a Margulis spacetime.
This requires models for both the hyperbolic plane $\Ht$
and the $3$-dimensional Lorentzian affine  space,
which we call {\em Minkowski space $\Eto$.}
(Geometrically, $\Eto$ is characterized as the unique simply connected, geodesically complete,
flat, Lorentzian manifold.) 
We motivate the discussion by relating $3$-dimensional Lorentzian geometry
to $2$-dimensional hyperbolic geometry.

After discussing these models, 
we introduce {\em crooked halfspaces\/}
to build fundamental polyhedra for Margulis spacetimes.
Our exposition follows Burelle-Charette-Drumm-Goldman~\cite{MR3262435}
as modified by Danciger-Gu\'eritaud-Kassel~\cite{MR3480555}. 
Schottky's classical construction of hyperbolic surfaces   
(now called {\em ping-pong\/}) extends to crooked geometry, 
giving a geometric construction of Margulis spacetimes.  
This was first developed by Drumm~\cite{MR2638637,MR1191372,MR1243791}.
For more details and background, see \cite{MR2987620},  \cite{MR3379819} 
and \cite{MR3379833}.

Whereas in the initial examples of Margulis the topology of the quotients is unclear, 
Margulis spacetimes which have a crooked fundamental polyhedron are topologically
equivalent to a solid handlebody, and thus \emph{topologically tame}. The tameness 
of all Margulis spacetimes is discussed at the end of \S 2.

\subsection{The geometry of $\Ht$ and $\Eto$}
This introductory section describes the geometry of the hyperbolic plane and its relation to
the Lorentzian geometry of Minkowski $3$-space. 
In particular, we discuss the basic geometric objects needed to build hyperbolic surfaces
and their extensions to Minkowski space. 
Then we discuss the classical theory of Schottky groups 
which, in the next section, we extend to proper affine deformations of Fuchsian groups.
\subsubsection{The projective model for $\Ht$}
Start with the familiar model of $\Ht$ as the upper halfplane in $\C$, consisting
of $x + i y \in \C$ with $y>0$ with the Poincar\'e metric. 
The group $\PSLtwoR$ acts on $\Ht$ by linear fractional transformations, 
comprises all orientation-preserving isometries, and 
 is the identity component of $\Isom(\Ht)$. The  
complement of $\PSLtwoR$ in $\Isom(\Ht)$ is the other connected component,
comprised of orientation-reversing isometries. 

A natural model for the Lorentzian vector space $\Rto$ is the set of
{\em Killing vector fields\/} on $\Ht$, or the  
the Lie algebra $\sltwoR$ of $\PSLtwoR$. The Lie algebra $\sltwoR$ is identified with
the set of  traceless $2\times 2$ real matrices, and the action of $\Isom(\Ht)$  on 
$\sltwoR$ is  by  $\Ad$, the adjoint representation (see Section~\ref{subsec:deform}).
The (indefinite) 
inner product on $\sltwoR$ is defined by:
\[ 
\vv \cdot \vw := \frac12 \tr(\vv \vw);\]
this is $1/8$ the Killing form on $\sltwoR$. 
The basis 
\[
  \vx_1 = \begin{bmatrix} 0 & 1 \\ 1 & 0 \end{bmatrix}    , \,
  \vx_2 = \begin{bmatrix} 1 & 0 \\ 0 & -1 \end{bmatrix}    , \,  
  \vx_3 = \begin{bmatrix} 0 & -1 \\ 1 & 0 \end{bmatrix}   .
\]
is {\em Lorentzian-orthonormal\/} in the sense that
\[ 
\vx_1 \cdot  \vx_1  = \vx_2\cdot  \vx_2  =1,  
\quad \vx_3 \cdot  \vx_3  = -1, \]
and $\vx_i \cdot \vx_j  =0$ for $i \neq j$.

A natural model for $\Ht$ is one of the two components of the quadric
\[ 
\vu \cdot \vu = -1. \] 
A natural isometry from the upper half-plane 
$ \{ x + i y\in \C \mid  y > 0 \}$ with the Poincar\'e metric,  
to the Lie algebra $\sltwoR \longleftrightarrow \Rto$ with the above inner product is:
\[
\begin{array}{ccccc}
\Ht &\longrightarrow & \sltwoR & \longleftrightarrow & \Rto \\
x + i y & \longmapsto & \frac{1}{y}
\bmatrix x & - (x^2 + y^2) \\ 1 & -x \endbmatrix  
&\longleftrightarrow &
\frac{1}{2y} \bmatrix 
 1- x^2-y^2  \\  2x \\ 1+ x^2+y^2  \endbmatrix.
\end{array}
\]
Here the vector on the right-hand side represents the coordinates with 
respect to the basis $\vx_1,\vx_2,\vx_3$ of $\Rto$:
\[
 \frac{1}{y}
\bmatrix x & - (x^2 + y^2) \\ 1 & -x \endbmatrix  =    
\frac{1- x^2-y^2}{2y}\ \vx_1 \ + \  \frac{x}{y}\   \vx_2 \ +\   
\frac{1+ x^2+y^2}{2y}\   \vx_3.
\]
Note that the Lie algebra $\sltwoR$ comes naturally equipped with an orientation, 
defined in terms of the bracket. Indeed we define the ordered basis 
$(\vx_1, \vx_2, \vx_3)$ to be positive since $[\vx_1, \vx_2] = + 2\vx_3$. 
This orientation of $\sltwoR$ is naturally associated with an orientation of $\Ht$, 
namely the orientation for which $\vx_3$ is an infinitesimal rotation in the 
positive direction. Note that the orientation reversing isometries of the 
Lorentzian structure on $\sltwoR$ flip the sign of the Lie bracket.
The adjoint action of an orientation reversing isometry of $\Ht$ preserves the 
Lie bracket and hence the orientation of $\sltwoR$, however it exchanges 
$\Ht$ with the other component of the quadric $\vu \cdot \vu = -1$.

In relativistic terminology,
a vector $\vv\in \Rto \setminus \{ \mathbf{0} \}$ is called
{\em spacelike\/} if
$\vv\cdot\vv$ is positive, {\em timelike\/} if it is negative, {\em null\/} or {\em lightlike\/} if it is zero.
A spacelike (respectively, timelike) vector $\vv$ is
{\em unit-spacelike\/} (respectively,  {\em unit-timelike\/}) 
if and only if $\vv\cdot \vv = 1$ (respectively, $\vv\cdot\vv = -1$).
A Killing vector field $\xi\in\sltwoR$ is spacelike (respectively, null, timelike)
 if and only it generates a hyperbolic (respectively, parabolic, elliptic)
 one-parameter group of isometries. 

The set of null vectors (including $\mathbf{0}$) is a cone, called the 
\emph{light cone} and denoted $\mathcal{N}$. The set $\mathcal{N}\setminus\mathbf{0}$
has two components, or {\em nappes\/}. 

Choosing a preferred nappe is equivalent to choosing a {\em time orientation.\/}
For example, we choose lightlike vectors with $v_3>0$ to be   {\em future  \/} pointing
and lightlike vectors  with  $v_3 < 0$ to be {\em past pointing.\/} The connected 
components of timelike vectors are similarly defined to be future and past pointing.
One model for $\Ht$, already described above, is the space of  \emph{future-pointing} unit-timelike  vectors.

An equivalent model for $\Ht$ is the subset of the projective space 
$\P(\Rto)$ comprised of timelike lines, with  $\partial\Ht$ the set of null lines. 
Spacelike vectors $\vw\in\Rto$ determine geodesics and halfplanes in $\Ht$ in 
the projective model as follows: 
\[
\h{\vw} := \{ \vv\in\Ht \mid \vv\cdot \vw > 0 \} \]
is the open halfplane defined by $\vw$. The  boundary  
$\partial\h{\vw}=\Ht \cap \vw^\perp$ is the geodesic 
corresponding to $\vw$. The orientation of $\Ht$ together with $\h{\vw}$
determines a natural orientation on $\partial\h{\vw}$. 
That is,  unit-spacelike vectors in $\Rto = \sltwoR$ correspond to {\em oriented geodesics\/} 
in $\Ht$.
Note that as a Killing vector field, $\vw$ is an infinitesimal translation 
along $\partial\h{\vw}$ in the positive direction. Generally, the Killing field on $\Ht$ associated to $\vw \in \Rto$ is given explicitly in terms of the Lorentzian cross-product on $\Rto$, which is just the Lie bracket $[\cdot, \cdot]$ on $\slt$, by restricting the vector field $x \mapsto [\vw, x]$ to the hyperboloid of timelike future pointing vectors. 

\subsubsection{Cylinders and fundamental slabs}\label{sec:cylinders}
A basic hyperbolic surface is a {\em hyperbolic cylinder,\/} arising as the quotient
$\Sigma := \langle A\rangle\backslash \Ht$ where $A\in\Isom(\Ht)$ is an isometry 
which is \emph{hyperbolic},
meaning $A$ leaves invariant a unique geodesic $l_A$
along which it translates by a distance $\ell_A$. 
The  image $\langle A\rangle\backslash l_A$ in $\langle A\rangle\backslash\Ht$ is a closed geodesic in $\Sigma$ of length $\ell_A$. 
The scalar invariant $\ell_A$ completely describes the isometry type of $\Sigma$.

One can build fundamental domains for the action of $\langle A\rangle$  as follows. 
Choose any geodesic $l_0\in\Ht$ meeting $l_A$ in a point $a_0\in\Ht$,
and let $\Hh_0$ be the halfplane bounded by $l_0$ containing $A(a_0)$.
Then $A(\Hh_0) \subset \Hh_0$ and
the complement 
\[ 
\Delta := \Hh_0 \setminus A(\Hh_0)\]  is a fundamental domain
for the cyclic group $\langle A\rangle$ acting on $\Ht$. 
If $\vw$ is unit-spacelike and $\Hh_0 = \h{\vw}$ is the open half-plane defined by 
$\vw$ as above, then the fundamental domain takes the form
\[ \Delta = \h{\vw} \cap \h{{-A(\vw)}}\]
and we call $\Delta$ a {\em fundamental slab\/} for $\langle A\rangle$.

\subsubsection{Affine deformations of cylinders}\label{sec:AffDefCylinders}
Recall the {\em Minkowski $3$-space,\/} $\Eto$, namely the complete $1$-connected 
flat Lorentzian manifold in dimension three.
Equivalently, $\Eto$ is an affine space whose underlying vector space is equipped
with a Lorentzian inner product.
As above, we model the underlying Lorentzian vector space $\Rto$ on the 
Lie algebra
$\sltwoR$ of Killing vector fields on $\Ht$.

The group of {\em linear\/} orientation preserving isometries $\Isom(\Rto)$ equals 
the special orthogonal
group $\SOto\cong \Isom(\Ht)$.
Its identity component $\Isom^+(\Ht)$, comprising the orientation preserving isometries of $\Ht$, is naturally identified via the adjoint representation with $\PSLtwoR$.  %
A hyperbolic element $A\in\PSLtwoR$  pointwise fixes one spacelike line, 
and this  line contains exactly two unit-spacelike vectors that are 
negatives of each other.

In terms of Killing vector fields, 
the  line fixed by $A$ is just the infinitesimal centralizer of $A$. 
Indeed, 
\[ 
A = \exp\left( \frac{\ell_A}{2} \ \vw_A \right) 
\sim \bmatrix   e^{\ell_A/2} & 0 \\ 0 & e^{-\ell_A/2} \endbmatrix \] 
where $\vw_A$ is one of the two unit-spacelike generators of this line, and $\ell_A > 0$ is the translation length of $A$ in $\Ht$. 
Since $\ell_A > 0$, 
$A \mapsto \vw_A$ is well-defined and equivariant under 
the action of $\Isom^+(\Ht)$, in the sense that $w_{BAB^{-1}} = B \vw_A$, for any $B \in \Isom^+(\Ht)$.

Let $g\in\Isom(\Eto)$ be an affine deformation of $A$.  
That is, 
\[
g(p) = A(p) + \u ,
\] 
where the vector $\u$ is the 
{\em translational part\/} of $g$.
There is a unique $g$-invariant line, denoted $\Axis(g)$, 
parallel to the fixed-line $\R \vw_A$ of the linear part $A$ of $g$.
The line $\Axis(g)$ inherits a natural orientation 
induced from $\vw_A$.

The restriction of $g$ to $\Axis(g)$ is a translation,
and the {\em signed\/} displacement along this spacelike geodesic
is a scalar defined by 
\begin{equation}\label{eq:FormulaForMargulisInvariant}
\alpha(g) = \vw_{\L(g)} \cdot \u(g).
\end{equation}
Here, $\L(g):= A$ is the linear part of $g$ and $\u(g)$ is the translational part of $g$, 
as in \eqref{eq:AffineMap}. Clearly,  $g$ acts freely if and only if $\alpha(g)\neq 0$.

\begin{defn}
The scalar quantity $\alpha(g)$ is called the {\em Margulis invariant\/} of $g$.
\end{defn}
\noindent
Through a translational change of coordinates, 
the origin may be located on  $\Axis(g)$ so that
\[
\u(g) = \alpha(g) \vw_{\L(g)}. \]
By an orientation preserving linear change of coordinates, the linear part 
$A = \L(g)$ diagonalizes and $g$ takes the form:
\[
g(x) = \begin{bmatrix}[ccc] 
e^{\ell_A} & 0 & 0  \\
0 & e^{-\ell_A} & 0  \\
0 & 0 & 1  \end{bmatrix}x + \begin{bmatrix}[c] 
 0  \\
0  \\
 \alpha(g)  \end{bmatrix}
\]

\subsubsection{The role of orientation}\label{sec:roleOfOrientation}
We note that the definition of the vector $\vw_A$, and hence of the 
Margulis invariant $\alpha(g)$, depends on more than just the structure of 
$\Rto\longleftrightarrow \sltwoR$ as a Lorentzian vector space. 
It depends, more specifically, on the Lie algebra structure,
where the operation of the {\em Lorentzian cross-product\/} 
is determined entirely by the Lorentzian structure and orientation. 
Margulis' original work does not use the Lie algebra $\sltwoR$. 
There the definition of $\alpha$ is given directly in terms of the Lorentzian structure and a choice  of orientation, as follows. 
The positive direction $\vw_A$ of the $1$-eigenspace of $\L(g)$ is the one making the basis 
$(\vw^+, \vw^-, \vw_A)$ positive, 
where $\vw^+, \vw^-$ denote representatives of the attracting and repelling 
eigenlines of $\L(g)$ which have negative inner product $\vw^+ \cdot \vw^- < 0$.

\subsubsection{Parallel slabs}\label{sec:ParallelSlabs}
Affine lines parallel to $\Axis(g)$ describe a $g$-invariant foliation,
and the foliation $\Ff$ by planes orthogonal to these lines is a  $g$-invariant $2$-dimensional
foliation defined by the $g$-equivariant orthogonal projection 
\[ \Eto \xrightarrow{~\Pi~}  \Axis(g) \longleftrightarrow \R . \]
In particular, since $g$ acts by translation by $\alpha(g)$ on $\Axis(g)$, 
the preimage $\Pi\inv\Big[0,\vert\alpha(g)\vert\Big]$ of the closed interval 
\[\Big[0,\vert\alpha(g)\vert\Big]\ \subset\ \R\]
is a fundamental domain for $\langle g\rangle$.
Since the faces of this fundamental domain are the parallel hyperplanes
$\Pi\inv(0)$ and $\Pi\inv\vert\alpha(g)\vert$,
we call these fundamental domains {\em parallel slabs.\/}

\subsubsection{Schottky groups and ping-pong}

Having discussed actions of the infinite cyclic group $\Z$ on both 
$\Ht$ and $\Eto$, we now turn to non-abelian free groups.
We recall Schottky's \cite{MR1579739} construction of discrete free groups acting  on 
the hyperbolic plane.

For brevity, let us focus on the two generator case.

Suppose $A_1, A_2 \in \PSL(2,\R)$ are hyperbolic elements with respective 
translation axes $l_1$ and $l_2$. Let $\vw_1$ and $\vw_2$ be unit 
spacelike vectors associated to halfspaces $\Hh_{\vw_1}, \Hh_{\vw_2}$, such that for each $i = 1,2$, the boundary of $\Hh_{\vw_i}$ crosses $l_i$ in $\Ht$ and satisfies 
$A_i \cdot \Hh_{\vw_i} \subset \Hh_{\vw_i}$ so that a fundamental 
domain for the cyclic group $\langle A_i\rangle$  is given by 
$\Delta_i = \Hh_{\vw_i} \cap \Hh_{-A_i\vw_i}$, as in Section~\ref{sec:cylinders}.

\begin{prop}[Schottky]
Suppose that the four half-spaces 
\begin{align}\label{halfspaces}
\Hh_{-\vw_1}, \Hh_{A_1 \vw_1}, \Hh_{-\vw_2}, \Hh_{A_2 \vw_2}
\end{align} are pairwise disjoint.
Then $A_1$ and $A_2$ generate a discrete free subgroup 
$\Gamma = \langle A_1, A_2\rangle < \PSL(2,\R)$.
\end{prop}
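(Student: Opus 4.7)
The plan is a standard ping-pong (Klein criterion) argument. Set $X_i^+ := \Hh_{A_i \vw_i}$ and $X_i^- := \Hh_{-\vw_i}$ for $i=1,2$; the hypothesis asserts that the four open half-planes $X_1^+, X_1^-, X_2^+, X_2^-$ are pairwise disjoint, and these are precisely the complementary regions to the fundamental slabs $\Delta_i = \Hh_{\vw_i} \cap \Hh_{-A_i\vw_i}$ of Section~\ref{sec:cylinders}. The goal is to show that $\Gamma = \langle A_1, A_2\rangle$ acts on $\Ht$ with orbit displacement uniformly bounded away from zero outside the identity, which delivers freeness and discreteness together.

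The engine is the dynamical claim that for each $i\in\{1,2\}$ and each integer $n \neq 0$,
\[ A_i^n(\Delta_i) \;\subset\; X_i^{\sigma(n)}, \]
where $\sigma(n)\in\{+,-\}$ denotes the sign of $n$. This iterates from $A_i \Hh_{\vw_i} \subsetneq \Hh_{\vw_i}$, and the containment is in fact strict: if $A_i$ preserved the geodesic $\partial\Hh_{\vw_i}$, that geodesic would have to be the unique $A_i$-invariant geodesic $l_i$, but by hypothesis $\partial \Hh_{\vw_i}$ only crosses $l_i$ transversely. Next, pairwise disjointness together with openness forces $X_j^\pm \subset \Delta_i$ whenever $j \neq i$: an open subset of $\Ht$ disjoint from the two open half-planes $X_i^+$ and $X_i^-$ cannot meet their bounding geodesics either, so it lies in the open complement $\Delta_i$. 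Combining these, for all $n \neq 0$ and $j \neq i$, we obtain $A_i^n(X_j^\pm) \subset X_i^{\sigma(n)}$.

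Applying the Klein ping-pong lemma to the disjoint sets $X := X_1^+ \cup X_1^-$ and $Y := X_2^+ \cup X_2^-$ then yields $\Gamma \cong \langle A_1\rangle * \langle A_2\rangle \cong \Ft$. For discreteness, I would fix a basepoint $p \in \Delta_1 \cap \Delta_2 = \Ht \setminus (X_1^+ \cup X_1^- \cup X_2^+ \cup X_2^-)$, which is nonempty because four pairwise disjoint open arcs cannot cover $\partial\Ht$, and a boundary complement point extends inward to give an interior complement point. Iterating the dynamical claim right-to-left on any reduced word $\gamma = A_{i_1}^{n_1}\cdots A_{i_\ell}^{n_\ell}$ places $\gamma(p) \in X_{i_1}^{\sigma(n_1)}$; since $p$ has positive hyperbolic distance from every closure $\overline{X_i^\pm}$, the displacement $d(\gamma(p),p)$ is uniformly bounded below, precluding any sequence $\gamma_n \to \mathrm{id}$. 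The main subtlety will be the open-versus-closed bookkeeping: at the boundary, the base iterate $A_i \overline{\Hh_{\vw_i}} \subset \overline{X_i^+}$ is not strict, so the pivotal inclusion $X_j^\pm \subset \Delta_i$ rests essentially on the \emph{openness} of the half-planes in the disjointness hypothesis.
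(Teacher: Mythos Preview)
Your argument is correct and follows essentially the same ping-pong scheme as the paper: the paper tracks the polygon $\Delta=\Delta_1\cap\Delta_2$ directly, showing by induction on word length that $w(\Delta)$ lands in the half-space determined by the leftmost letter of $w$, whereas you package the same induction via the Klein criterion with the sets $X=X_1^+\cup X_1^-$ and $Y=X_2^+\cup X_2^-$. The only substantive addition in your write-up is the explicit discreteness step via a basepoint $p\in\Delta$ with $d(p,\overline{X_i^\pm})>0$, which the paper leaves implicit; your open-vs-closed bookkeeping (in particular the observation that $X_j^\pm\subset\Delta_i$ uses openness of the disjoint half-planes) is exactly the care the argument needs.
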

Let us sketch the proof of this well-known fact. Consider the polygon
\[ \Delta = 
\Hh_{\vw_1} \cap \Hh_{-A_1 \vw_1} \cap \Hh_{\vw_2} \cap \Hh_{-A_2 \vw_2} \subset \Ht, \]
which is bounded by four disjoint lines. Then the image of $\Delta$ 
under any nontrivial reduced word $w$ in $A_1, A_1^{-1}, A_2, A_2^{-1}$ 
lies in one of the four halfspaces~\eqref{halfspaces}. 
Indeed observe the following relations:
\begin{align*}
A_1 \cdot (\Hh_{-\vw_1})^c &= A_1 \cdot \overline{\Hh_{\vw_1} } =  \overline{\Hh_{A_1 \vw_1}}\\
A^{-1}_1 \cdot (\Hh_{A_1 \vw_1})^c &= A_1^{-1} \cdot 
\overline{\Hh_{-A_1\vw_1} } =  \overline{\Hh_{-\vw_1}}\\
A_2 \cdot (\Hh_{-\vw_2})^c &= A_2 \cdot \overline{\Hh_{\vw_2} } =  \overline{\Hh_{A_2 \vw_2}}\\
A_2^{-1} \cdot (\Hh_{A_2\vw_2})^c &= A_2^{-1} \cdot 
\overline{\Hh_{-A_2\vw_2} } =  \overline{\Hh_{-\vw_2}}.
\end{align*}

Then by induction on the length of the reduced word $w$, 
the image of $\Delta$ (the ``ping-pong ball") under the action of $w$ lies: 
\begin{itemize}
\item inside 
$\Hh_{A_1 \vw_1}$ if the first (that is, leftmost) letter of $w$ is $A_1$;
\item inside $\Hh_{-\vw_1}$ if the first letter is $A_1^{-1}$;
\item inside $\Hh_{A_2 \vw_2}$ if the first letter is $A_2$;
\item  inside $\Hh_{-\vw_2}$ if the first letter is $A_2^{-1}$. 
\end{itemize}
This proves the proposition.

Since $\Gamma$ is discrete it acts properly on $\Ht$, however 
$\Delta$ might not be a fundamental domain for the action. 
Indeed, in some cases, 
\[\bigcup_{\gamma \in \Gamma} \gamma\cdot \overline{\Delta}\] is a proper subset of $\Ht$. 
It is always possible, however, to choose a polyhedron $\Delta$ as above which is a fundamental domain.

The disjointness of the halfspaces~\eqref{halfspaces} is essential in this construction.

However in affine space at most {\em two\/} halfspaces can be disjoint.
Hence, a ping-pong construction using affine halfspaces does not work in affine geometry. 
Nonetheless, Schottky fundamental domains in $\A^3 = \Eto$ do exist,
and are constructed from \emph{crooked halfspaces}.

\subsection{Crooked geometry}\label{sec:crooked}
Milnor~\cite{milnor1977fundamental} essentially proposed building proper actions of free 
groups by combining
proper actions of cyclic groups. 
However, deciding whether multiple proper actions by cyclic groups generate a 
proper action of the free product is quite delicate.
As we observed in the previous section, hyperplanes, which are perhaps the most 
natural separating surfaces in affine geometry, are not well suited for a Schottky style 
construction of fundamental domains for free groups. 
Observe that, by contrast to Euclidean geometry, in our Lorentzian setting the linear 
part of an affine transformation dominates the translational part for 
``most'' points. Hence building fundamental polyhedra adapted more to the 
linear part than the 
translational part seems preferable. 

\subsubsection{Crooked planes and crooked halfspaces}
In \cite{MR2638637}, Drumm introduced so-called crooked planes
in order to build fundamental domains for proper affine actions of non-abelian free groups.
A crooked plane disconnects $\Eto$ into two regions, called crooked halfspaces. 
Unlike a linear plane, a crooked plane has a distinguished point, called the 
\emph{vertex}. In particular, crooked planes are not homogeneous. 
Drumm's original construction was given purely in terms of Lorentzian geometry. 
Here, however, we make use of the identification $\Rto \cong \sltwoR$ and 
define them in terms of Killing fields on the hyperbolic plane, following 
Danciger-Gu\'eritaud-Kassel~\cite{MR3480555}.

Let $\vw \in \sltwoR$ be a spacelike unit vector, 
let 
\[ \ell = \ell_{\vw} = \vw^\perp \cap \Ht\]  
be the oriented geodesic 
associated to $\vw$, and let $\vw^+$ and $\vw^-$ be future oriented 
lightlike vectors respectively representing the forward and backward endpoints 
$[\vw^+], [\vw^-]$ of $\ell_\vw$ in $\partial \Ht$. Here we think of the 
ideal boundary $\partial \Ht$ as the projectivized null-cone in $\Rto = \sltwoR$. 
We first define the crooked plane $\CP(\mathbf{0}, \ell)$ with vertex the origin 
$\mathbf{0}$. The crooked plane $\CP(\vx, \ell)$ with vertex $\vx$ is 
just the translate $\CP(\vzero, \ell) + \vx$.

The \emph{crooked plane} $\CP(\mathbf{0}, \ell)$ is the union of three linear pieces, a 
\emph{stem}, and two \emph{wings}, described as follows. See Figure~\ref{fig:crookedplane}.

\begin{figure}[ht]
\centerline{\includegraphics{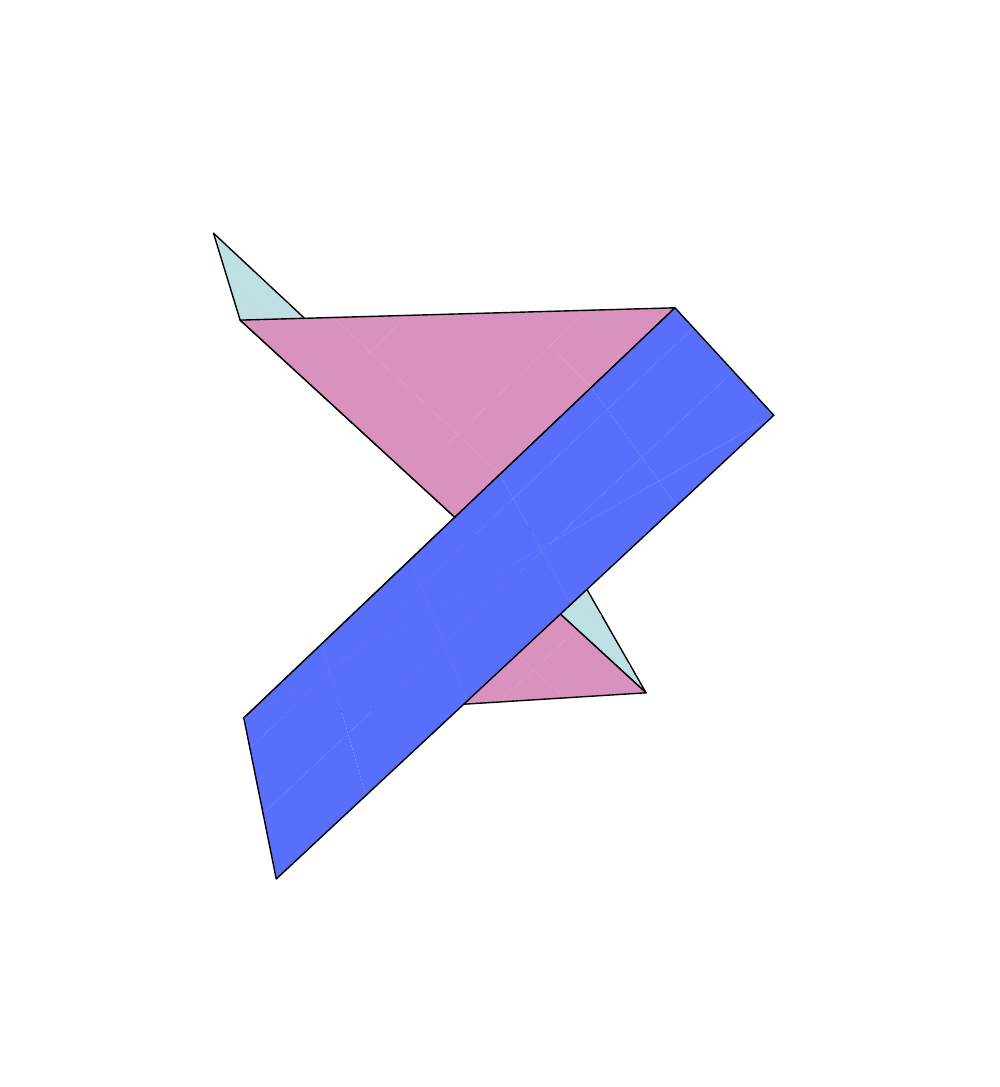}}
\caption{A crooked plane.}
\label{fig:crookedplane}
\end{figure}

\begin{itemize}
\item The stem is the closure of the collection of all elliptic Killing fields whose 
fixed point in $\Ht$ lies on $\ell_\vw$.
\item The wing associated to the forward endpoint $[\vw^+]$ of $\ell_\vw$ is the 
union of the parabolic Killing fields $\R \vw^+$ which fix $[\vw^+]$ and the 
hyperbolic Killing fields for which $[\vw^+]$ is a \emph{repelling} fixed point. 
This wing meets the stem along the hinge $\R \vw^+$.
\item Similarly, the wing associated to $[\vw^-]$ is the union of the parabolic 
Killing fields $\R \vw^-$ fixing $[\vw^-]$ and the hyperbolic Killing fields for 
which
$[\vw^-]$ is a repelling fixed point. 
This wing meets the stem along the hinge $\R \vw^-$.
\item 
The line $\R \vw$, called the {\em spine,\/}
lies in $\CP(\vzero, \vw)$ 
and %
crosses the stem perpendicularly at $\vzero$. 
The positive ray $\R^+ \vw$  
lies %
in the wing associated to $[\vw^-]$ and 
the negative ray $\R^- \vw$ 
lies %
in the wing associated to $[\vw^+]$.
\end{itemize}

More succinctly, the crooked plane $\CP(\vzero, \ell)$ is the collection of 
all Killing fields with a \emph{non-attracting fixed point} on the closure 
\[ \overline{\ell} = \ell \cup \{ [\vw^+], [\vw^-]\}\] 
of $\ell$ in $\overline \Ht.$ 

A crooked plane $\CP(\vzero, \ell)$ divides $\sltwoR$ into two components, 
called crooked halfspaces. 
The \emph{crooked halfspace} $\CH(\vzero, \ell)$ 
is the collection of Killing vector fields with a non-attracting fixed point 
contained in the closure $\overline{\Hh_{\vw}} \subset \overline{\Ht}$ of the 
positive half-plane $\Hh_{\vw}$ bounded by $\ell_{\vw}$. Note that 
\begin{align*}
\CH(\vzero, \ell) \cup \CH(\vzero, -\ell) &= \sltwoR, \\
\CH(\vzero, \ell) \cap \CH(\vzero, -\ell) &= \CP(\vzero, \ell) = 
\CP(\vzero, -\ell)
\end{align*}
where 
$-\ell = \ell_{-\vw}$ is the same geodesic $\ell$, 
but with the opposite orientation.
 
More generally, 
the crooked plane $\CP(\vx, \vw)$ and crooked halfspace $\CH(\vx, \vw)$ with vertex $\vx$ 
are obtained by translating by $\vx$:
\begin{align*}
\CP( \vx, \vw) &:= \vx + \CP( \mathbf{0}, \vw),  \text{ and }\\
\CH( \vx, \vw) &:= \vx + \CH( \mathbf{0}, \vw).
\end{align*}

\subsubsection{Crooked ping-pong}\
The following Lorentzian ping-pong lemma was proved in \cite{MR1191372}.

\begin{lemma}\label{lemma:disjointCPs}
Let $\Gamma = \langle \gamma_1 , \gamma_2 , \ldots , \gamma_n \rangle $ 
be a group in 
$\Isom (\E)$, and  $\{ \CH_{\pm 1 },\CH_{\pm 2 } , ..., \CH_{\pm n } \} $ be 
$2n$ disjoint crooked halfspaces such
that 
\[ \gamma_i ( \CH_{-i} ) = \overline{\E \setminus \CH_{+i}}. \]
Then $\Gamma$ is 
a free group that acts properly on $\E$, with fundamental domain 
\[ 
\Delta\ := \ \overline{\E\setminus \cup_{i=1}^n (\CH_{-i} \cup \CH_{+i})}. \]
In particular,
the quotient $\Gamma\backslash\Eto$ is homeomorphic to an open solid handlebody.
\end{lemma}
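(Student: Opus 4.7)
The plan is to establish all three conclusions (freeness, properness, and the topology of the quotient) by a single ping-pong argument on the tiling of $\Eto$ by translates of $\Delta$.

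First I would reformulate the hypothesis symmetrically. For each generator letter $a \in \{\gamma_1^{\pm 1}, \ldots, \gamma_n^{\pm 1}\}$, set $\sigma(a) = +i$ if $a = \gamma_i$ and $\sigma(a) = -i$ if $a = \gamma_i^{-1}$. The identity $\gamma_i(\CH_{-i}) = \overline{\Eto \setminus \CH_{+i}}$, together with its inverse $\gamma_i^{-1}(\CH_{+i}) = \overline{\Eto \setminus \CH_{-i}}$, says that
\[ a \cdot \bigl(\Eto \setminus \CH_{-\sigma(a)}\bigr) \ \subset \ \CH_{\sigma(a)} \]
for every such letter. I would then prove by induction on $k$ that, for every reduced word $w = a_k a_{k-1} \cdots a_1$, one has $w(\Delta) \subset \CH_{\sigma(a_k)}$. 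For $k = 1$, $\Delta$ is disjoint from every $\CH_{\pm i}$, hence $\Delta \subset \Eto \setminus \CH_{-\sigma(a_1)}$, and the displayed inclusion gives the claim. For the inductive step, setting $w' = a_{k-1}\cdots a_1$, the hypothesis gives $w'(\Delta) \subset \CH_{\sigma(a_{k-1})}$; since $w$ is reduced, $\sigma(a_{k-1}) \neq -\sigma(a_k)$, and disjointness of the $2n$ halfspaces yields $\CH_{\sigma(a_{k-1})} \subset \Eto \setminus \CH_{-\sigma(a_k)}$, so applying $a_k$ closes the induction. Since $w(\Delta) \cap \mathrm{int}(\Delta) = \emptyset$ for every nontrivial reduced $w$, the natural map from the abstract free group on $\gamma_1,\ldots,\gamma_n$ onto $\Gamma$ is injective, so $\Gamma$ is free of rank $n$ and the translates $w \cdot \mathrm{int}(\Delta)$ are pairwise disjoint.

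Next I would show that $\Delta$ is a fundamental domain and that the action is proper. For the tiling property, given any $p \in \Eto \setminus \Delta$, there is a unique index $\sigma$ with $p \in \CH_{\sigma}$; the corresponding generator moves $p$ into $\overline{\Eto \setminus \CH_{-\sigma}}$, and iterating this ``peeling'' builds a reduced word $w$ such that $w^{-1}(p) \in \overline{\Delta}$. For properness, I would establish local finiteness of the tiling: given a compact $K \subset \Eto$, refining the above induction shows that a word $w$ of length $k$ places $w(\Delta)$ inside a nested chain of translates of crooked halfspaces of depth $k$; because the $2n$ halfspaces $\CH_{\pm i}$ are pairwise disjoint, the bottom of this nested chain shrinks toward $\partial\Eto$ in a controlled way, so only finitely many words $w$ can have $w(\Delta) \cap K \neq \emptyset$. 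Then $\gamma K \cap K \neq \emptyset$ forces $\gamma\Delta$ to meet a bounded neighborhood of $K$, which confines $\gamma$ to a finite set.

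Finally, for the topological statement, I would note that $\mathrm{int}(\Delta)$ is contractible (it is an open subset of $\Eto \cong \R^3$ whose complement is a disjoint union of $2n$ sets, each homeomorphic to a closed halfspace), with $\partial\Delta$ consisting of $2n$ disjoint crooked planes identified in pairs by the generators $\gamma_i$. The quotient $\Gamma \backslash \Eto$ is therefore obtained from a contractible $3$-dimensional region by gluing $n$ pairs of contractible boundary faces, producing an open $3$-manifold with free fundamental group of rank $n$; by the standard classification of such face-pairings (equivalently, since open $3$-manifolds arising from $n$ face-identifications of a ball-like region with free $\pi_1$ of rank $n$ are handlebodies), this quotient is an open solid handlebody of genus $n$.

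The main obstacle will be the local finiteness step in the properness argument: crooked halfspaces are noncompact and the naive ping-pong does not immediately preclude accumulation of translates on a fixed compact $K$, so one must extract a quantitative nesting from the disjointness hypothesis — perhaps by exhausting each $\CH_{\pm i}$ by ``cores'' that compress toward the crooked plane at infinity. A secondary subtlety will be verifying that the face-pairings produce a genuine manifold quotient rather than a singular one at the hinges $\R\vw^\pm$ where crooked planes have creases; this will rely on the fact that the disjointness hypothesis for the $\CH_{\pm i}$ holds uniformly near these singular edges.
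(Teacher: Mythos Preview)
Your ping-pong argument for freeness and for the pairwise disjointness of the translates $w\cdot\mathrm{int}(\Delta)$ is fine and matches the standard approach. But you have misidentified where the real difficulty lies, and your argument for the hard step does not work as written.

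The paper (which does not give a full proof but describes its structure and cites Drumm, Charette--Goldman, and Danciger--Gu\'eritaud--Kassel) says explicitly that the conditions of the lemma \emph{immediately} imply properness on the $\Gamma$-saturate of $\Delta$; the \emph{difficult} part is to show that the translates actually cover, i.e.\ that $\Eto = \bigcup_{\gamma\in\Gamma}\gamma\Delta$. You treat this covering step as routine: ``iterating this `peeling' builds a reduced word $w$ such that $w^{-1}(p)\in\overline{\Delta}$.'' But your peeling gives no reason to terminate. If $p\in\CH_{+i}$, then applying $\gamma_i^{-1}$ lands you in $\overline{\Eto\setminus\CH_{-i}}$, which still contains $\CH_{+i}$ and all the other $\CH_{\pm j}$; nothing prevents the orbit from bouncing among the halfspaces forever. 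In hyperbolic ping-pong this is handled by a metric that decreases (distance to a basepoint, say), but in affine space the halfspaces are noncompact and no such quantity is evident. Establishing termination is exactly the content of Drumm's original argument and its later refinements, and it requires genuine work with the geometry of crooked planes --- it is not a formal consequence of the inclusion relations you have set up.

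Conversely, the issue you flag as the ``main obstacle'' --- local finiteness of the tiling once it exists --- is comparatively mild: once $\Gamma\Delta=\Eto$, the boundary of $\Delta$ consists of $2n$ crooked planes paired by the generators, and local finiteness near faces and hinges follows from the disjointness hypothesis by standard Poincar\'e-polyhedron reasoning. So your assessment of where the work lies is inverted, and the actual gap is the unjustified termination of the peeling procedure.
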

The conditions in the lemma immediately imply that the groups satisfying
these conditions act properly on a subset of $\E$. 
The difficult part of the proof is to demonstrate that 
\[
\E \ = \Gamma\left( \Delta \right) \ := \ \bigcup_{\gamma \in \Gamma} \gamma\Delta.
\]  
See Drumm~\cite{MR2638637,MR1191372,MR1243791}, 
Charette-Goldman~\cite{MR1796126}, and 
Danciger-Gu\'eritaud-Kassel~\cite{MR3480555}, Lemma~7.6 (pp.178--179).

Using Lemma~\ref{lemma:disjointCPs}, Drumm proved:
\begin{thm}\label{thm:linearholonomy}  
Every finitely generated free discrete subgroup 
of $\SOto$ admits a proper affine deformation
with a fundamental domain bounded by crooked planes.
\end{thm}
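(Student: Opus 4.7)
The plan is to apply the crooked ping-pong Lemma~\ref{lemma:disjointCPs}: the goal is to produce, for a chosen free generating set $A_1, \ldots, A_n$ of $\Gamma_0$, an affine deformation $g_i\colon p \mapsto A_i p + \u_i$ together with $2n$ pairwise disjoint crooked halfspaces $\CH_{\pm 1}, \ldots, \CH_{\pm n}$ in $\Eto$ satisfying the pairing relations $g_i(\CH_{-i}) = \overline{\Eto \setminus \CH_{+i}}$. I would begin by invoking the classical Schottky structure of finitely generated free discrete subgroups of $\Isom(\Ht) \cong \SOto$: there is a free generating set $A_1, \ldots, A_n$ and unit spacelike vectors $\vw_{\pm 1}, \ldots, \vw_{\pm n}$ with $\vw_{+i} = -A_i \vw_{-i}$, such that the hyperbolic halfplanes $\Hh_{\vw_{\pm i}}$ are pairwise disjoint in $\Ht$ and satisfy the Schottky pairing $A_i \Hh_{\vw_{-i}} = \overline{\Ht \setminus \Hh_{\vw_{+i}}}$. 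If $\Gamma_0$ contains parabolics one weakens this slightly to allow tangency at $\partial \Ht$, which does not affect the remainder of the argument.

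Next, introduce crooked halfspaces $\CH_{\pm i} := \CH(\vx_{\pm i}, \vw_{\pm i})$ whose vertex positions $\vx_{\pm i} \in \Eto$ remain to be determined. Using the translation equivariance $\CH(\vx, \vw) = \vx + \CH(\vzero, \vw)$, the identity $\overline{\Eto \setminus \CH(\vzero, \vw)} = \CH(\vzero, -\vw)$ from \S\ref{sec:crooked}, and the relation $A_i \vw_{-i} = -\vw_{+i}$, a direct computation shows that the pairing condition of Lemma~\ref{lemma:disjointCPs} is equivalent to the single linear constraint $\u_i = \vx_{+i} - A_i \vx_{-i}$. In particular, any choice of vertices automatically determines an affine deformation of $\Gamma_0$ satisfying the pairing hypothesis, and the theorem reduces to choosing those vertices so that the $2n$ crooked halfspaces are pairwise disjoint in $\Eto$.

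This disjointness is the main obstacle. Unlike affine halfplanes, two crooked halfspaces whose bounding geodesics in $\Ht$ are disjoint need not be disjoint in $\Eto$, because their wings and stems extend unboundedly in different causal directions. The key technical input is the disjointness criterion for crooked halfspaces (cf.~\cite{MR1191372, MR3262435, MR3480555}): whenever two unit spacelike vectors $\vw, \vw'$ determine disjoint bounding geodesics in $\overline{\Ht}$, the set of vectors $\vv \in \Rto$ for which $\CH(\vv, \vw)$ is disjoint from $\CH(\vzero, \vw')$ is a nonempty open convex cone. Applying this to each of the $\binom{2n}{2}$ pairs of indices imposes finitely many open conic constraints on the pairwise differences of the vertices; the final step is to solve these simultaneously via Drumm's original vertex placement, which exploits the freedom to translate each pair of crooked halfspaces $\CH_{\pm i}$ along the fixed spacelike axis of $A_i$ by a scalar large enough to push every pairwise difference into the interior of its admissible cone. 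Once such vertices are chosen, Lemma~\ref{lemma:disjointCPs} delivers the required proper affine deformation of $\Gamma_0$ with fundamental polyhedron $\Delta = \overline{\Eto \setminus \bigcup_i (\CH_{-i} \cup \CH_{+i})}$ bounded by crooked planes.
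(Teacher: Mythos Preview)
Your overall strategy is exactly the paper's: reduce to Lemma~\ref{lemma:disjointCPs} by lifting a Schottky configuration of halfplanes in $\Ht$ to crooked halfspaces in $\Eto$, then choose vertices so that the $2n$ crooked halfspaces are pairwise disjoint. The pairing computation $\u_i = \vx_{+i} - A_i \vx_{-i}$ is correct, and you correctly identify the disjointness criterion as the essential technical input.

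The one point where your write-up diverges from the paper, and becomes imprecise, is the final vertex placement. You propose to ``translate each pair of crooked halfspaces $\CH_{\pm i}$ along the fixed spacelike axis of $A_i$.'' This is not what the paper does, and it is not obviously sufficient: the axis of $A_i$ crosses the geodesics $\ell_{\pm i}$ but is generally not orthogonal to them, so a translation along that axis need not lie in the admissible cone for every pair; moreover, if some $A_i$ is parabolic (a case you allow), there is no such spacelike axis at all. The paper's device is cleaner and decouples the problem completely. One introduces the \emph{stem quadrant} $Q(\ell)$ of each oriented geodesic $\ell$ (the open quadrant of the stem plane lying inside the interior of $\CH(\mathbf{0},\ell)$) and uses Lemma~\ref{lemma:StemQuadrants}: for \emph{any} choice of $\vu_i \in Q(\ell_i)$ one has $\vu_i - \vu_j \in Q(\ell_i) - Q(\ell_j)$, hence $\CH(\vu_i,\ell_i)$ and $\CH(\vu_j,\ell_j)$ are disjoint for all $i\neq j$. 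No balancing of constraints or ``large enough'' argument is needed; the stem quadrant translation automatically solves all $\binom{2n}{2}$ conditions simultaneously. Replacing your last paragraph with this observation makes the argument complete and matches the paper's proof.
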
 

\subsection{Disjointness of crooked halfspaces and planes}
\label{sec:disjointness}
The application of  Lemma~\ref{lemma:disjointCPs} requires crooked planes to be disjoint.
We now give a criterion for disjointness, originally due to Drumm--Goldman~\cite{MR1660333},
and later conceptually clarified by Burelle--Charette--Drumm--Goldman~\cite{MR3262435}.

Consider a set of pairwise disjoint geodesics $\{ \ell_1, \ell_2 , ..., \ell_n \}$ 
in $\Ht$ which bound 
a common region. The geodesics can be oriented consistently so that 
the interiors of the 
crooked halfspaces $\CH( \bf{0}, \ell_i)$ are disjoint. All of the 
crooked halfspaces meet at the origin $\bf{0}$ and pairs of the 
corresponding crooked planes, boundaries of the crooked
halfspaces, may share a wing.  

Translations $\vu_i$ exist for which the sets $\{ \CH(\vu_i, \ell_i ) \}$ are pairwise disjoint. 
This situation is exactly the one 
described in Lemma~\ref{lemma:disjointCPs}. 
To this end, define the following:
\begin{defn}
For an oriented geodesic $\ell$, the \emph{(open) stem quadrant $Q(\ell)$}  
is the open quadrant of the plane containing the stem of $\CP(\bf{0}, \ell)$ which
lies inside the interior of $\CH(\bf{0}, \ell)$.
\end{defn}

The stem quadrant $Q(\ell)$ is composed of spacelike vectors and bounded by 
two null rays inside the plane that contains the stem  of $\CP(\bf{0},\ell)$. 
In the Lie algebra interpretation, the spacelike vectors in $Q(\ell)$ are
hyperbolic Killing vector fields
whose invariant geodesics are perpendicular to $\ell$ and point into the 
interior of the halfspace defined by $\ell$. The null rays on the boundary 
are the parabolic Killing vector fields whose fixed points are the endpoints 
of $\ell$.

Stem quadrants were defined in \cite{MR3262435} and 
used to show the following:
\begin{lemma}\label{lemma:StemQuadrants}
$\CH(\vu, \ell) \subset \CH(\bf{0},\ell)$
if $\vu\in Q(\ell)$.  %
Furthermore, 
$\CH(\vu_1, \ell_1)$ and $\CH(\vu_2, \ell_2)$ are disjoint if and only if
$\vu_1 - \vu_2\in Q(\ell_1)-Q(\ell_2)$.
\end{lemma}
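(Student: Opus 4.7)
The plan is to prove the two assertions in sequence. The first, that $\vu \in Q(\ell)$ forces $\CH(\vu,\ell) \subset \CH(\vzero,\ell)$, carries most of the geometric content, and the disjointness criterion then follows from it by a short translation argument in one direction, and a slightly subtler structural analysis in the other.

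I would first set up coordinates adapted to $\ell = \ell_\vw$, say with $\vw = \vx_1$, so that $\vw^\perp$ is the Lorentzian $(\vx_2, \vx_3)$-plane containing the stem, and the two wings $W^\pm$ extend out of the null hinges $\R\vw^\pm$ into the degenerate affine planes $(\vw^\pm)^\perp$. Under this identification, the stem quadrant $Q(\ell)$ is the open spacelike quadrant of $\vw^\perp$ sitting inside $\mathrm{int}\,\CH(\vzero,\ell)$; crucially, it coincides with the interior of the convex cone $\R_{\geq 0}\vw^+ + \R_{\geq 0}\vw^-$ once the signs are fixed by orientation. Since the inclusion $\vu + \CH(\vzero,\ell) \subset \CH(\vzero,\ell)$ is convex in $\vu$ (the set of $\vu$ satisfying it is closed under nonnegative combinations), it suffices to verify it on the two boundary null rays. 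For $\vu = t\vw^+$ with $t \geq 0$ I would translate each of the three planar pieces of $\CP(\vzero,\ell)$ separately and check that each image lies in $\CH(\vzero,\ell)$: the wing $W^+$ is sent into itself because $\vw^+$ is parallel to the half-plane generated by its hinge; the stem and the opposite wing $W^-$ shift rigidly by a vector in $\mathrm{int}\,\CH(\vzero,\ell)$, and a short coordinate computation shows their translates remain on the correct side of $\CP(\vzero,\ell)$. The analogous verification for $\vu = t\vw^-$ is symmetric.

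The ``if'' direction of the disjointness criterion is then essentially formal. Given $\vu_1 - \vu_2 = \vq_1 - \vq_2$ with $\vq_i \in Q(\ell_i)$, set $\va := \vu_1 - \vq_1 = \vu_2 - \vq_2$. Part~1 gives $\CH(\vu_i,\ell_i) = \va + \CH(\vq_i,\ell_i) \subset \va + \mathrm{int}\,\CH(\vzero,\ell_i)$ (the passage to the interior uses that $\vq_i$ lies in the \emph{open} stem quadrant). The standing hypothesis on the $\ell_i$ being pairwise disjoint, consistently oriented, already guarantees that $\mathrm{int}\,\CH(\vzero,\ell_1) \cap \mathrm{int}\,\CH(\vzero,\ell_2) = \emptyset$, and translating by $\va$ preserves this disjointness.

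The ``only if'' direction is the main obstacle. The key reformulation is that $\CH(\vu_1,\ell_1) \cap \CH(\vu_2,\ell_2) \neq \emptyset$ if and only if $\vu_1 - \vu_2 \in \CH(\vzero,\ell_2) - \CH(\vzero,\ell_1)$, so what must be shown is the set-theoretic identity
\[
\R^{2,1} \;=\; \bigl(\CH(\vzero,\ell_2) - \CH(\vzero,\ell_1)\bigr) \,\cup\, \bigl(Q(\ell_1) - Q(\ell_2)\bigr),
\]
with the two pieces meeting only on the boundary. I would establish this by a case analysis using the coordinate picture from the first step: decompose $\vu = \vu_1 - \vu_2$ according to how it relates to the stems and wings of $\CP(\vzero,\ell_1)$ and $-\CP(\vzero,\ell_2)$, and in each case either build a common point explicitly (by locating a ray of Killing fields shared by both halfspaces) or exhibit $\vu$ as $\vq_1 - \vq_2$ for some $\vq_i \in Q(\ell_i)$. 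The delicate cases are the boundary transitions, where $\vu$ lies on a null hyperplane separating the two regions and the crooked planes $\CP(\vu_1,\ell_1)$, $\CP(\vu_2,\ell_2)$ share a wing or a hinge; these are precisely the touching configurations that separate ``disjoint interiors'' from ``fully disjoint,'' and correctly accounting for them is where the bulk of the technical work lies.
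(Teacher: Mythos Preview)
The paper does not actually prove this lemma: it states the result, references the original disjointness criterion of Drumm--Goldman and its reformulation via stem quadrants in Burelle--Charette--Drumm--Goldman, and points to a figure. So there is no in-paper argument to compare against; your proposal should instead be measured against those cited sources.

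With that caveat, your outline is sound and tracks the approach of the cited references closely. For Part~1, reducing by convexity to the two null generators $\vw^\pm$ of $\overline{Q(\ell)}$ and then checking the three pieces of the crooked plane separately is exactly the standard argument. Your ``if'' direction is correct as written; the step from inclusion to \emph{strict} inclusion (into the interior) when $\vq_i$ lies in the open quadrant is genuine and does follow from your proof of Part~1, though it is worth stating explicitly as a strengthening of the lemma's first assertion rather than invoking it parenthetically.

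For the ``only if'' direction, your reformulation $\R^{2,1} = \bigl(\CH(\vzero,\ell_2) - \CH(\vzero,\ell_1)\bigr) \cup \bigl(Q(\ell_1) - Q(\ell_2)\bigr)$ is correct and is the right way to organize the problem. You are also right that this is where the real work lies and that it proceeds by a somewhat tedious case analysis on the position of $\vu_1 - \vu_2$ relative to the null planes through the hinges; this is precisely how Drumm--Goldman handle it. What you have written is an honest sketch rather than a proof, but it identifies the correct structure and the correct delicate boundary cases (shared wings/hinges), so there is no conceptual gap---only the explicit case-checking remains to be filled in.
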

See Figure~\ref{fig:StemQuad}.
\begin{figure}[ht]
\centerline{\includegraphics[scale=1.0]{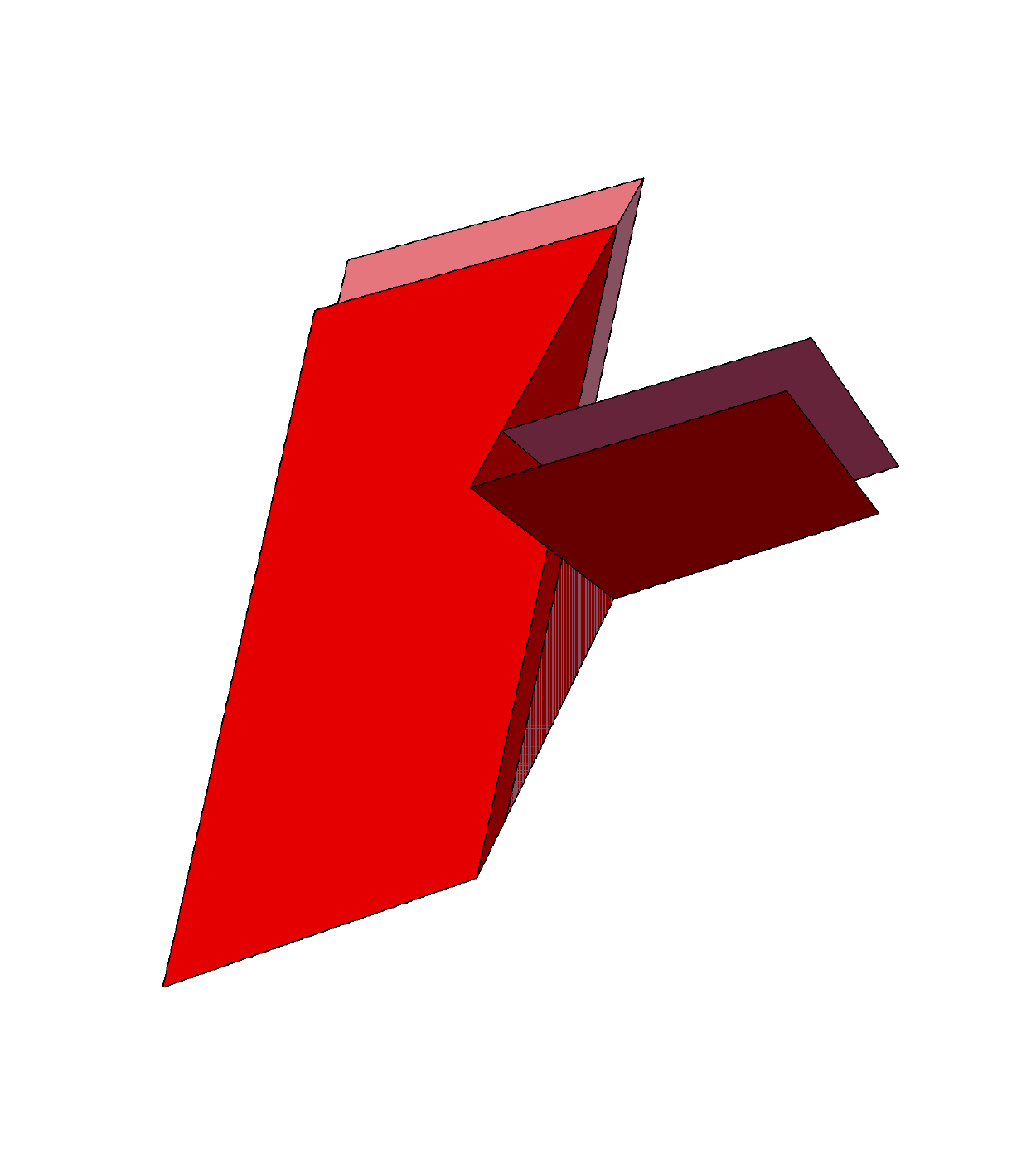}}
\caption{A crooked plane and a translation of the crooked plane by
a vector in the stem quadrant.}
\label{fig:StemQuad}
\end{figure}

In particular, start with the collection of crooked halfspaces 
$\{ \CH(\mathbf{0}, \ell_i ) \}$ whose
interiors are disjoint. Translate each crooked halfpace in a stem quadrant  direction
 $\vu_i\in Q(\ell_i)$, to create a collection  $\{ \CH(\vu_i, \ell_i ) \}$  of disjoint crooked 
halfspaces as in Lemma~\ref{lemma:disjointCPs}.

Burelle-Charette-Drumm-Goldman~\cite{MR3262435} 
introduce foliations by crooked planes;
Burelle-Francoeur~\cite{MR3916271} show that every crooked slab admits a foliation by crooked planes,
answering a question raised by Charette-Kim~\cite{MR3266531}.

\subsection{Tameness}\label{sec:tameness}
A natural question, in a direction converse to Theorem~\ref{thm:linearholonomy},
is whether every Margulis spacetime arises from a crooked polyhedron,
that is, whether Drumm's construction gives all Margulis spacetimes.
This question, first asked by
Drumm--Goldman~\cite{MR1336695},  
motivated much of the recent work on Margulis spacetimes.
This {\em Crooked Plane Conjecture\/} 
was established by Danciger--Gu\'eritaud--Kassel~\cite{MR3480555, dgknew} in general, 
following earlier work for two-generator groups, 
by Charette-Drumm-Goldman~\cite{MR2653729,MR3569564}. 
See \S\ref{sec:strips} for a discussion of these ideas.

This has the following purely topological consequence:

\begin{thm}\label{thm:TopologicalTameness}
A complete affine $3$-manifold with fundamental group $\Gamma$ free of rank $r$ is homeomorphic to 
a handlebody of genus $r$. 
\end{thm}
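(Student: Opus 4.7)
The plan is to reduce the theorem directly to the Crooked Plane Theorem of Danciger--Gu\'eritaud--Kassel and the crooked ping-pong Lemma~\ref{lemma:disjointCPs}. First, consider the generic case $r \geq 2$. Since a free group of rank at least two is not virtually solvable, the structural analysis of Section~1 (in particular Proposition~\ref{prop:vsolvable} together with the classification of possible Zariski closures of the linear holonomy in dimension three reviewed after it) forces $\big(\ZClosure{\L(\Gamma)}\big)^0 = \SOoto$. Consequently $\A^3$ carries a $\Gamma$-invariant flat Lorentzian structure, so that $M$ is a Margulis spacetime. Moreover, by Proposition~\ref{thm:DiscreteEmbedding}, the linear holonomy $\L \colon \Gamma \to \SOto$ is a discrete embedding onto a Fuchsian subgroup $\Gamma_0$ which, being abstractly isomorphic to $\Gamma$, is free of rank $r$.

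Next I would invoke the Crooked Plane Theorem (the former Crooked Plane Conjecture of Drumm--Goldman, established in general in \cite{MR3480555,dgknew}): the Margulis spacetime $M$ admits a fundamental domain bounded by pairwise disjoint crooked planes. Concretely, there exist a free generating set $\gamma_1, \ldots, \gamma_r$ of $\Gamma$ and $2r$ pairwise disjoint crooked halfspaces $\CH_{\pm 1}, \ldots, \CH_{\pm r}$ such that $\gamma_i(\CH_{-i}) = \overline{\Eto \setminus \CH_{+i}}$ for each $i = 1, \ldots, r$. Lemma~\ref{lemma:disjointCPs} then identifies $M = \Gamma\backslash\Eto$ with an open solid handlebody, and since such a handlebody has free fundamental group whose rank equals its genus, that genus must equal $r$.

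For the degenerate case $r = 1$, the group $\Gamma = \langle g \rangle$ is cyclic and hence virtually polycyclic, so the Margulis-spacetime framework does not directly apply. Instead, Lemma~\ref{lem:OneEigenvalue} ensures that $\L(g)$ has $1$ as an eigenvalue, which allows one to build a parallel slab fundamental domain as in Section~\ref{sec:ParallelSlabs} (or an ordinary slab between two parallel affine planes when $g$ is a pure translation). Identifying the two parallel faces of such a slab via $g$ yields a quotient homeomorphic to $\R^2 \times S^1$, the open solid torus, which is precisely the genus-one handlebody.

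The main obstacle is plainly the appeal to the Crooked Plane Theorem itself: its proof is a substantial undertaking, resting on the Danciger--Gu\'eritaud--Kassel structural theory of Margulis spacetimes via contracting Lipschitz vector fields and infinitesimal strip deformations sketched in Section~5 of the paper. Granting this deep input, however, the topological conclusion is an essentially formal consequence of the ping-pong lemma together with the elementary handling of the cyclic case.
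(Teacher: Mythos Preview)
Your approach for $r\ge 2$ is correct and is exactly the route the paper takes: the theorem is stated immediately after the Crooked Plane Conjecture as its ``purely topological consequence,'' and Lemma~\ref{lemma:disjointCPs} supplies the handlebody conclusion once a crooked fundamental domain is available. The paper goes on to note, however, that two independent proofs of Theorem~\ref{thm:TopologicalTameness} \emph{preceded} the resolution of the Crooked Plane Conjecture and do not use crooked planes at all: the Choi--Drumm--Goldman compactification of $M^3$ as an $\rpthree$-manifold with geodesic boundary, and the Danciger--Gu\'eritaud--Kassel fibration of $M^3$ by timelike lines over $\Sigma$ (Proposition~\ref{prop:proper}). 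So while your derivation is valid, it is worth knowing that tameness was established first by other means, and that the Crooked Plane Theorem itself rests on the same DGK machinery that already yields the fibration proof.

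One small quibble on the $r=1$ case: your appeal to \S\ref{sec:ParallelSlabs} is not quite on point, since that construction is specific to affine deformations of \emph{hyperbolic} elements of $\SOto$, whereas for a general proper free $\Z$-action on $\A^3$ the linear part need not lie in $\SOto$ at all. The conclusion is still true (any proper free affine $\Z$-action on $\A^3$ has quotient an open solid torus), but it requires a slightly different slab argument using the $1$-eigenspace of $\L(g)$ and the translational component along it, rather than the Lorentzian-orthogonal projection of \S\ref{sec:ParallelSlabs}.
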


Theorem~\ref{thm:TopologicalTameness}  is the analog of the {\em Marden Conjecture\/} for hyperbolic $3$-manifolds, 
proved independently by Agol~\cite{Agol-tame} and Calegari-Gabai~\cite{MR2188131}, 
which implies that every complete hyperbolic $3$-manifold with free fundamental group
is homeomorphic to an open solid handlebody.
There are two proofs of Theorem~\ref{thm:TopologicalTameness}  due independently to 
Choi-Drumm-Goldman and to Danciger-Gu\'eritaud-Kassel, which do not use crooked 
planes and which preceded the resolution of the Crooked Plane Conjecture.

Choi--Goldman~\cite{MR3636632} proved Theorem~\ref{thm:TopologicalTameness} 
in the case that $\Sigma$ has  compact convex core (i.e. the linear holonomy 
group is convex cocompact). 
This was later extended by Choi--Drumm--Goldman~\cite{ChDG-tame} to include 
the case that $\Sigma$ has cusps.
The proof involves compactifying
a Margulis spacetime $M^3$  with convex cocompact linear holonomy,
as an $\rpthree$-manifold with geodesic (ideal) boundary. 
The boundary is an $\rptwo$-manifold obtained by grafting annuli
to two copies of $\Sigma$ along its boundary, as in Goldman~\cite{MR882826} 
and Choi~\cite{MR1779499}. 
The boundary $\rptwo$ surface is naturally the quotient of a domain in the 
projective sphere at infinity for $\Eto$. 
Given that the $\Gamma$ action on $\Eto$ and on this domain at infinity are 
both proper, the difficulty lies in proving that the 
$\Gamma$-action on the union is also proper.
This is accomplished by using the dynamics of the lifted geodesic flow as
in Goldman-Labourie-Margulis~\cite{MR2600870} and the fact that the
linear holonomy group $\Gamma_0$ acts on $\overline{\Ht}$ as a 
{\em convergence group.\/} 
When $\Gamma_0$ is no longer convex cocompact (but still finitely generated),
then the proof requires a detailed technical analysis of the geometry near a cusp.

From a different point of view, Danciger--Gu\'eritaud--Kassel~\cite{MR3465975,dgknew} 
proved (Proposition~\ref{prop:proper}) that any Margulis spacetime $M$ is fibered 
in affine (timelike) lines over the associated surface $\Sigma$.
This also gives a proof of Theorem~\ref{thm:TopologicalTameness}. 
See \S\ref{sec:Deformations} for further discussion.

In another direction, 
Frances~\cite{MR1989275} defines an ideal boundary for Margulis spacetimes,
using the action on the (Lorentzian) conformal compactification $\EinThree$ of Minkowski space,
sometimes called the {\em Einstein Universe.\/}
This extends the local conformal Lorentzian geometry of $\Eto$ in the same way
that the conformal geometry of $S^n$ extends conformal Euclidean geometry on $\En$.
The Einstein Universe is diffeomorphic to the mapping torus of the antipodal map on $S^2$.
Its automorphism group is the projective orthogonal group $\POThreeTwo$.
(Compare \cite{MR2436232,MR3323635}.)

The main result is that the action extends to the conformal boundary in much the same way that actions of discrete isometry groups on hyperbolic $n-1$-space extends to its ideal boundary $S^n$. 
He defines a {\em limit set $\Lambda$\/} such that $\Gamma$ acts properly discontinuously
on the complement $\EinThree\setminus\Lambda$ and describes a compactification
for the quotient $\big(\EinThree\setminus\Lambda\big)/\Gamma$
(which is not a manifold).

\section{The Margulis spectrum}\label{sec:MargulisSpectrum}

\subsection{The marked signed Lorentzian length spectrum}\label{sec:msLlspectrum}

The {\em marked length spectrum\/}  of a hyperbolic surface $\Sigma$ is an important
invariant, which determines the isometry type of $\Sigma$.
Recall that this is the function 
\[ \pi_1(\Sigma) \xrightarrow{~\ell_\Sigma~} \Rnonneg \]
which associates to the homotopy class of a based loop $\gamma$ the infimum
of the lengths of loops (freely) homotopic to $\gamma$. 
When $\Sigma$ is closed, then $\ell(\gamma)$ equals the length of the closed
geodesic in $\Sigma$ homotopic to $\gamma$;
in particular $\ell(\gamma) > 0$. 
In general, $\gamma$ has parabolic holonomy if and only if  $\ell(\gamma) = 0$.

This function is part of a general construction defined on the group
$\Isom(\Ht)$.
The {\em geodesic displacement function\/} 
\begin{equation}\label{eq:GeodesicDisplacementFunction} 
\Isom(\Ht)  \xrightarrow{~\ell~} \mathbb \Rnonneg \end{equation}
associates to $g$ the infimum $d\big(p,g(p)\big),$ where $p\in\Ht$. 
If $g$ is elliptic or parabolic, then $\ell(g) = 0$. 
If $g$ is hyperbolic, then $\ell(g)$ equals the length of the
shortest closed geodesic in the cylinder $\Ht/\langle g\rangle$,
as in \S\ref{sec:cylinders}.

If $M^3 = \Gamma\backslash\Eto$ 
is a Margulis spacetime with associated hyperbolic surface $\Sigma\sim M^3$,
then each homotopy class of closed curve $\gamma\in \Gamma$ with non-parabolic 
holonomy is represented by a unique
spacelike geodesic whose Lorentzian length is $\vert \alpha(\gamma)\vert$, where 
$\alpha$ is defined by
\eqref{eq:FormulaForMargulisInvariant} in \S~\ref{sec:AffDefCylinders}.
More generally, the function
\begin{equation}\label{eq:MargulisInvariant}
\pi_1(M) \xrightarrow{~\alpha~} \R \end{equation}
is an important invariant of $M^3$ called the 
{\em marked Lorentzian length spectrum\/}, and is 
analogous to the marked length spectrum  $\ell_\Sigma$ for the associated
hyperbolic surface.  
As we shall see in the next subsection, the signs of the Margulis invariants 
$\alpha(\gamma)$ also play a central role in the theory.

\subsection{Properties of the Margulis invariant}\label{sec:MargInv}
Margulis defined the function $\alpha$ in 
\cite{margulis1983free}, \cite{margulis1987complete}.
Recall
from \S~\ref{sec:AffDefCylinders} that if $g\in\Isom^+(\Eto)$ is an 
orientation preserving Lorentzian isometry with 
$\LL(g)$ hyperbolic, then $g$ leaves invariant a unique spacelike line $\Axis(g)$
which carries a natural orientation induced from the orientation of $\Eto$, 
see Section~\ref{sec:AffDefCylinders}.
Furthermore, the restriction of $g$ to $\Axis(g)$ is  a translation by a multiple
$\alpha(g) \vw_g$, 
where $\vw_g$ is the unit-spacelike vector parallel to 
$\Axis(g)$ determined by the orientation of $\Axis(g)$. 
The {\em sign\/} of $g$ is defined as the sign of $\alpha(g)\in\R$
(positive, negative, or zero).

Margulis's invariant has the following important properties:
\begin{lemma}\label{lem:alpha}
Suppose  $g \in \Isom^+(\Eto)$  
with $\L(g)$ hyperbolic.
\begin{enumerate}
\item\label{alpha0} $\alpha(g) = 0$ if and only if $g$ has a fixed point.
\item\label{alphainv} $\alpha (g ) = \big(g(p)- p\big) \cdot  \vw_g$  for any 
$p \in \E$.
\item\label{alphaconj} $\alpha (g ) = \alpha ( \eta g \eta^{-1})$ for any 
$\eta\in\Isom(\Eto)$.    
\item\label{alphan} $\alpha (g^n) = | n | \alpha (g)$ for $n\neq 0$. % 
\end{enumerate}
\end{lemma}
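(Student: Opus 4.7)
My plan is to establish (2) first by direct linear computation, then derive (1), (3), and (4) from it. For (2), writing $g(x) = \L(g)x + \uu(g)$ and using the defining formula~\eqref{eq:FormulaForMargulisInvariant}, I would compute
\[
\big(g(p)-p\big)\cdot\vw_g \;=\; \big(\L(g)-I\big)p\cdot\vw_g \;+\; \uu(g)\cdot\vw_g.
\]
Since $\L(g)\in\SOto$ preserves the Lorentzian form and fixes $\vw_g$, the identity $\L(g)p\cdot\vw_g = \L(g)p\cdot\L(g)\vw_g = p\cdot\vw_g$ makes the first term vanish, leaving $\uu(g)\cdot\vw_g = \alpha(g)$. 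Property~(1) is then immediate: a fixed point $p_0$ gives $g(p_0)-p_0=0$, whence $\alpha(g)=0$; conversely, the normal form at the end of \S\ref{sec:AffDefCylinders} shows that $\alpha(g)=0$ forces $\uu(g)=0$ in coordinates where the origin lies on $\Axis(g)$, so $g$ fixes every point of the axis.

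For (3), I would set $h = \eta g \eta^{-1}$ and apply (2) with test point $q = \eta(p)$. The affine identity $h(\eta(p)) - \eta(p) = \L(\eta)\big(g(p)-p\big)$ together with the fact that $\L(\eta)$ preserves the Lorentzian form yields
\[
\alpha(h) \;=\; \L(\eta)\big(g(p)-p\big)\cdot\vw_h \;=\; \big(g(p)-p\big)\cdot\L(\eta)^{-1}\vw_h.
\]
The assignment $A\mapsto\vw_A$ is equivariant under conjugation (as noted in \S\ref{sec:AffDefCylinders}), so $\L(\eta)^{-1}\vw_h=\vw_g$ in the orientation-preserving case and $\alpha(h)=\alpha(g)$ follows. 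For orientation-reversing $\eta$ one uses the basis convention of \S\ref{sec:roleOfOrientation} to check that any sign flip in $\vw_h$ is cancelled by the compensating flip in the displacement vector.

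For (4), the case $n>0$ is straightforward: $\L(g^n)=\L(g)^n$ has the same positive $1$-eigenvector $\vw_{g^n}=\vw_g$, and iterating $g$ on a point of $\Axis(g)$ gives a translational contribution of $n\alpha(g)\vw_g$, whence $\alpha(g^n)=n\alpha(g)$. The case $n=-1$ requires care: $\L(g^{-1})$ interchanges the attracting and repelling null eigenlines of $\L(g)$, so swapping the first two vectors in the ordered basis of \S\ref{sec:roleOfOrientation} reverses orientation and forces $\vw_{g^{-1}}=-\vw_g$. Substituting $q=g^{-1}(p)$ into~(2) then gives
\[
\alpha(g^{-1}) \;=\; \big(g^{-1}(p)-p\big)\cdot(-\vw_g) \;=\; \big(g(q)-q\big)\cdot\vw_g \;=\; \alpha(g),
\]
and combining with the positive case yields $\alpha(g^n)=|n|\,\alpha(g)$ for all $n\neq 0$. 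The main obstacle throughout is sign-tracking of $\vw$ under the orientation convention of \S\ref{sec:roleOfOrientation}: once (2) is in hand, every remaining property reduces to verifying that each sign twist in $\vw$ is compensated by a corresponding twist in the displacement vector paired against it.
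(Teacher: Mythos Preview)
Your proof is correct. The paper does not actually give a proof of this lemma; it simply remarks that ``The four properties of Lemma~\ref{lem:alpha} are elementary'' and moves on. Your argument supplies precisely the elementary computations the paper omits, and your strategy of establishing (2) first and deducing the rest from it is the natural one.

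One small comment: in (3) you invoke the equivariance $\vw_{BAB^{-1}}=B\vw_A$ from \S\ref{sec:AffDefCylinders}, but the paper only states this for $B\in\Isom^+(\Ht)$, i.e.\ for $\L(\eta)$ in the identity component $\SOoto$. The full isometry group $\Isom(\Eto)$ has linear parts ranging over all four components of $\Oto$, and for components that reverse the orientation of $\Rto$ one indeed picks up a sign in $\vw_h$ relative to $\L(\eta)\vw_g$. Your remark that this sign is cancelled by a compensating sign in the inner product (since $\L(\eta)$ then satisfies $\L(\eta)a\cdot\L(\eta)b = a\cdot b$ regardless, but the orientation convention of \S\ref{sec:roleOfOrientation} flips $\vw$) is correct in spirit; a reader wanting full rigor would want the four components checked separately, but this is routine. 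Similarly, your treatment of $n=-1$ in (4) via $\vw_{g^{-1}}=-\vw_g$ is exactly the right observation and is consistent with the paper's emphasis on~\eqref{eq:scalar_margulis_invariant_of_inverse}.
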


\noindent
While  the definition of $\alpha (\gamma)$ provides the conceptual meaning  
of the Margulis  invariant, 
\ref{lem:alpha}.(\ref{alphainv}) is a useful formula for its computation.
\ref{lem:alpha}.(\ref{alphan}) implies that the sign of a power is independent of the exponent,
and, in particular, 
\begin{equation}
\label{eq:scalar_margulis_invariant_of_inverse}
\alpha ( \gamma^{-1}) = \alpha( \gamma).
\end{equation}
The four properties of Lemma~\ref{lem:alpha} are elementary.
In contrast, the following {\em Opposite Sign Lemma\/} is deep, 
playing an important role in characterizing proper affine deformations (Theorem~\ref{thm:GLM}).

\begin{thm}[Opposite Sign Lemma]\label{thm:OppositeSign}
If $g,h $ are 
isometries with hyperbolic  linear part, with opposite signs, that is, 
$\alpha(g)\alpha(h) \leq 0$,
then $\langle g, h\rangle$ does {\em not\/} act properly on $\A^3$.
\end{thm}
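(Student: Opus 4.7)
The plan is to prove the Opposite Sign Lemma by first handling the degenerate case where one Margulis invariant vanishes, and then tackling the main case by producing an explicit sequence of distinct group elements whose displacement of some point stays bounded, thereby violating properness.

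First, I would dispose of the case where $\alpha(g)=0$ or $\alpha(h)=0$. Suppose without loss of generality that $\alpha(g)=0$. By Lemma~\ref{lem:alpha}.(\ref{alpha0}), the transformation $g$ has a fixed point $p\in\Eto$. Since $\L(g)$ is hyperbolic, $g$ has infinite order, and the infinite stabilizer $\{g^n\}\subset\langle g,h\rangle$ of $p$ rules out properness at $p$. So we reduce to $\alpha(g)>0>\alpha(h)$.

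Second, I would set up normal forms. Using conjugation invariance of $\alpha$ (Lemma~\ref{lem:alpha}.(\ref{alphaconj})) and the diagonalization from \S\ref{sec:AffDefCylinders}, place the origin on $\Axis(g)$ so that in an orthonormal Lorentzian frame adapted to $\L(g)$, the element $g$ acts via a diagonal linear part with eigenvalues $e^{\ell_g}, e^{-\ell_g}, 1$ (along the attracting lightlike, repelling lightlike, and $\vw_g$ directions respectively), together with pure translation $\alpha(g)\vw_g$ along the neutral line. Record the analogous data for $h$ (with respect to $\vw_h$). The asymptotic behavior of high powers $g^n$ and $h^n$ is then controlled: transverse to the axis, one eigendirection expands exponentially and the other contracts exponentially, while along the axis one has linear drift in the $+\vw_g$ (resp.\ $+\vw_h$) direction with magnitudes $n\alpha(g)>0$ and $n\alpha(h)<0$.

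Third, the central step is to construct a sequence of distinct two-letter words $\gamma_n=g^{p_n}h^{q_n}$ (with $p_n, q_n\to\infty$ chosen carefully) and a point $x\in\Eto$ for which $\gamma_n x$ remains bounded. The idea is to pick $x$ to lie close to the repelling lightlike direction of $\L(h)$ so that $h^{q_n}x$ is pulled toward $\Axis(h)$ in the transverse directions while being translated by roughly $q_n\alpha(h)\vw_h$ in the axial direction; then choose $p_n$ so that $g^{p_n}$ both pulls the result toward $\Axis(g)$ transversally (so $x$ must be near the right stable manifold) and supplies an axial translation $p_n\alpha(g)\vw_g$ that cancels the axial shift from $h^{q_n}$. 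Because $\alpha(g)$ and $\alpha(h)$ have opposite signs, such a cancellation among linear combinations of $\vw_g$ and $\vw_h$ can indeed be arranged for infinitely many pairs $(p_n,q_n)$.

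The hard part will be the transverse bookkeeping: unlike the axial displacement which grows merely linearly and is easy to cancel, the transverse contribution to $\gamma_n x$ contains terms of size $e^{p_n\ell_g}$ and $e^{q_n\ell_h}$ multiplying translational-part vectors whose components in the dominant eigendirections generically do not vanish. The plan is to handle this by (i) choosing $x$ precisely on the intersection of the contracting stable manifolds of $\L(g)$ and $\L(h)$ (whose existence uses that $\L(g),\L(h)\in\SOoto$ are both hyperbolic with distinct axes in $\Ht$), and (ii) using the cocycle identity $\uu(g^{p_n}h^{q_n})=\uu(g^{p_n})+\L(g)^{p_n}\uu(h^{q_n})$ together with the decomposition of $\uu(h^{q_n})$ into eigendirections of $\L(g)$ to isolate and cancel the exponentially growing contribution. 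As a fallback, if the direct dynamical construction becomes too delicate, I would invoke the Goldman--Labourie--Margulis strategy alluded to in \S4: extend the normalized Margulis invariant $\alpha/\ell$ continuously to the convex space of geodesic currents on $\Sigma$, observe that a sign change of $\alpha$ on $\Gamma$ forces a zero on some current, and derive non-properness from the vanishing of $\alpha/\ell$ on that current.
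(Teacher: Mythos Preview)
Your fallback---the Goldman--Labourie--Margulis argument via geodesic currents---is precisely the proof the paper gives (in \S4, immediately after Theorem~\ref{thm:GLM}): extend $\alpha/\ell$ continuously to the convex space $\Cc(\Sigma)$, apply the Intermediate Value Theorem to find a current with $\halpha=0$, and conclude non-properness from the GLM criterion. So in the end you have the paper's argument.

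Your primary approach, however, has a genuine gap in the ``axial cancellation'' step. You propose to choose $p_n,q_n$ so that the translation $p_n\alpha(g)\vw_g$ cancels $q_n\alpha(h)\vw_h$. But $\vw_g$ and $\vw_h$ are generically linearly independent unit-spacelike vectors, so the only way $p_n\alpha(g)\vw_g+q_n\alpha(h)\vw_h$ can be bounded with $p_n,q_n\to\infty$ is if both coefficients are individually bounded---which they are not. Opposite signs alone do not help here. What Margulis actually proves (and what the paper summarizes in \S\ref{sec:OriginalProof} as formula~\eqref{eq:scalar_margulis_invariant_additivity}) is the \emph{approximate additivity of the scalar invariant}: for $\epsilon$-hyperbolic, $\epsilon$-transverse, sufficiently contracting $g',h'$, one has $\alpha(g'h')\approx\alpha(g')+\alpha(h')$, where the invariant of the product is measured along $\vw_{g'h'}$, not along $\vw_{g'}$ or $\vw_{h'}$. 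With this in hand, opposite signs let you produce elements $\gamma_n=g^{p_n}h^{q_n}$ with $\alpha(\gamma_n)$ bounded while hyperbolicity grows; one then argues that $\Axis(\gamma_n)$ passes through a fixed compact set and $\gamma_n$ translates along it by only $\alpha(\gamma_n)$, giving non-properness. Your ``transverse bookkeeping'' worry is absorbed into the hyperbolicity estimate that keeps $\Axis(\gamma_n)$ near the origin. Without the additivity lemma your direct displacement-tracking scheme does not close, and your stated plan for stable-manifold intersections does not address the unbounded drift in the $2$-plane $\mathrm{span}(\vw_g,\vw_h)$.
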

\noindent
Abels's survey paper~\cite{MR1866854} 
provides a detailed proof of Margulis's Opposite Sign Lemma,
along the lines of the original proof in 
(\cite{margulis1983free}, \cite{margulis1987complete}).

\subsection{Margulis's original construction}\label{sec:OriginalProof}
The Margulis invariant is also key in his original construction of proper
affine deformations of free discrete groups in $\Gamma_0\subgroup\SOto$.

To that end, first
define the {\em hyperbolicity\/} of a hyperbolic element 
$g \in \mathsf{SO}(2,1)$ as the Euclidean distance
\[
d( S^2 \cap \langle g^+ \rangle, S^2 \cap \langle g^- \rangle) 
\]
where $S^2$ is the Euclidean unit sphere, and $\langle g^+ \rangle$ 
(respectively  $\langle g^- \rangle$) is the attracting 
(respectively repelling) eigenline for $g$.
Hyperbolicity  is related to the distance of a fixed basepoint $0\in \Ht$
to the invariant geodesic $l_g \subset \Ht$ of $g$. 
Call an element {\em $\epsilon$-hyperbolic\/} if its hyperbolicity is greater that $\epsilon$.

Moreover, two elements $g,h\in\mathsf{SO}(2,1)$ are said to be $\epsilon$-transverse if they
are $\epsilon$-hyperbolic and 
\[
d( S^2 \cap \langle g^{\pm} \rangle, S^2 \cap \langle h^{\pm} \rangle)  > \epsilon .
\]

Margulis showed that, for any two $\epsilon$-hyperbolic, $\epsilon$-transverse elements 
$g, h \in \mathsf{SO}^+(2,1)$ that are ``sufficiently contracting''
(this basically means that their largest eigenvalues are sufficiently large), we have
\begin{equation}
\label{eq:scalar_margulis_invariant_additivity}
\alpha(gh) \approx \alpha(g) + \alpha(h).
\end{equation}
 
Now consider a free, two-generator discrete group
$\Gamma_0\subgroup\SO^+(2,1)$ whose limit set $\Lambda$ is not all of $\partial\Ht$
(equivalently, $\Gamma_0$ is not a lattice). 
Then there exists 
\[\eta\in \SO^+(2,1) / \Gamma_0 \]
so that every element in the coset $\eta\Gamma_0$ is $\epsilon$-hyperbolic. 
(In particular $\eta$ is $\epsilon$-hyperbolic, with attracting fixed 
point outside of $\Lambda$.)
Then using \eqref{eq:scalar_margulis_invariant_of_inverse} and
\eqref{eq:scalar_margulis_invariant_additivity}, Margulis showed that,  
for an affine deformation $\Gamma$ whose translational parts of the 
generators satisfy a suitable condition,
$\vert \alpha (\eta \gamma)\vert$  
grows roughly like the word-length of $\gamma$, for $\gamma\in \Gamma$.
Once the hyperbolicity is bounded below by $\epsilon$,
the Margulis invariant $\alpha(\eta\gamma)$ controls 
the minimum Euclidean distance $\eta\gamma$ moves any point. 
For any compact $K \subset\Eto$, 
\[
\{ \gamma\in\Gamma \mid  \eta\gamma(K) \cap  K \neq \emptyset\}
 \] 
is finite. This implies that  $\Gamma$ acts properly on $\Eto$. 
For further details, compare Drumm-Goldman~\cite{MR1060633}.

\subsection{Length spectrum rigidity}

The \emph{marked length spectrum} of a hyperbolic structure on a surface $\Sigma$ is the map which assigns to each free homotopy class $[\gamma]$ of loop, the length $\ell(\gamma)$ of the unique closed geodesic in that homotopy class. 
Regarding $ \Isom^+(\Ht) = \PSL(2,\R)$, suppose 
$\pi_1(\Sigma) \xrightarrow{~\rho_0~} \PSL(2,\R)$ is the holonomy
representation of the hyperbolic structure on $\Sigma$. 
Then $\ell(\gamma)$ relates to the {\em character\/} of $\rho_0$ by:
\[
\tr \big( \rho_0(\gamma) \big) \ = \ \pm 2 \cosh\left(\frac{\ell\big(\rho_0(\gamma)\big)}{2}\right).
\]
Hence, a hyperbolic structure on $\Sigma$ is determined by its length spectrum, simply because the holonomy representation $\rho_0$ is determined by its character. 
This is a general algebraic fact about irreducible linear representations;
see, for example Goldman~\cite{MR2497777} for a general proof.
For details on this question see Abikoff~\cite{MR590044}.
More recently Otal~\cite{MR1038361} 
and Croke~\cite{MR1036134} proved marked length spectrum rigidity for 
surfaces of {\em variable negative curvature,\/} 
where the algebraic methods are unavailable.
For length spectrum rigidity for locally symmetric spaces,
see Inkang Kim~\cite{MR1867246,MR1826662} and
Cooper-Delp~\cite{MR2653965}.

Now we discuss to what extent the marked Lorentzian length spectrum determines
the isometry type of a Margulis spacetime. 
As a consequence of Theorem~\ref{thm:OppositeSign},  either the $\alpha(g)$ are all positive or all negative.
By changing the orientation of $\Eto$, we may assume they are all positive.

Suppose $M^3$ is a Margulis spacetime whose associated (complete) hyperbolic surface $\Sigma$
has a compact convex core.
(In this case the holonomy group $\L(\Gamma)$ of $\Sigma$ is said to be 
{\em convex cocompact.\/})
As in \S\ref{sec:msLlspectrum}, 
every element of  $\L(\Gamma)\backslash\{1\}$ is hyperbolic and 
every closed curve in $M^3$ is freely homotopic to a unique closed geodesic in $M^3$.
The absolute value $\vert\alpha(\gamma)\vert$ equals the {\em Lorentzian length\/}  
of this closed geodesic in $M^3$. 
Thus the function 
\[
\pi_1(M^3) \xrightarrow{~\alpha\circ\rho~} \R
\]
represents the analogous {\em marked Lorentzian length spectrum\/} of $M^3$.

\begin{thm}\label{thm:StrongIsospectrality}
Consider two affine deformations $\rho, \rho'$ of $\Fn$
with the same convex-cocompact representation as linear part.
Suppose that $\alpha \circ \rho = \alpha\circ \rho'$.
Then $\rho$ and $\rho'$ are conjugate in $\Isom(\E)$.
\end{thm}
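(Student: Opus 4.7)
The plan is first to reduce the statement to the injectivity, on cohomology, of the map induced by the Margulis invariant. Setting the origin $p=0$ in Lemma~\ref{lem:alpha}.(\ref{alphainv}) and using \eqref{eq:AffineMap} with common linear part $\rho_0$, the hypothesis $\alpha\circ\rho=\alpha\circ\rho'$ becomes
$$\bigl(\u(\gamma)-\u'(\gamma)\bigr)\cdot\vw_{\rho_0(\gamma)}=0 \qquad\text{for every }\gamma\in\Gamma_0.$$
Conjugating $\rho$ by a pure translation $\vv$ leaves the linear part fixed and changes the translational cocycle by the coboundary $\delta\vv$; moreover, since $\rho_0(\gamma)\vw_{\rho_0(\gamma)}=\vw_{\rho_0(\gamma)}$, the function $\gamma\mapsto\u(\gamma)\cdot\vw_{\rho_0(\gamma)}$ is constant on cohomology classes. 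We therefore obtain a well-defined linear map
$$\overline\alpha:\HHh^1(\Gamma_0,\Rto)\longrightarrow \R^{[\Gamma_0]}, \qquad [\u]\longmapsto \bigl(\gamma\mapsto\u(\gamma)\cdot\vw_{\rho_0(\gamma)}\bigr),$$
and the theorem will follow once $\overline\alpha$ is shown to be injective: then $[\u-\u']=0$, so $\rho$ and $\rho'$ differ by a translation, hence are conjugate in $\Isom(\Eto)$.

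To establish injectivity of $\overline\alpha$, I would identify it with the differential at $\rho_0$ of the marked hyperbolic length spectrum. Via the low-dimensional coincidence $\Rto\cong\sltwoR$ recalled in the introduction (and developed in Section~5), the $\Gamma_0$-module $\Rto$ is the adjoint representation $\mathrm{Ad}\,\rho_0$, so $\HHh^1(\Gamma_0,\Rto)$ is canonically the Zariski tangent space at $\rho_0$ to the character variety $\mathrm{Hom}(\Gamma_0,\PSLtwoR)/\PSLtwoR$, equivalently to the Fricke--Teichm\"uller space $\mathfrak{F}(\Sigma)$. A variational computation --- differentiating the character relation $\tr\rho_t(\gamma)=\pm 2\cosh(\ell(\rho_t(\gamma))/2)$ along a smooth family of Fuchsian representations with basepoint $\rho_0$ and initial velocity $[\u]$, and exploiting that $\vw_{\rho_0(\gamma)}$ is the unit-spacelike element of $\sltwoR$ generating the centralizer of $\rho_0(\gamma)$ --- yields the Goldman--Margulis identity
$$\overline\alpha_{[\u]}(\gamma)\ =\ \frac{d}{dt}\bigg|_{t=0}\ell\bigl(\rho_t(\gamma)\bigr).$$
Thus $\overline\alpha_{[\u]}\equiv 0$ is equivalent to the whole marked length spectrum being critical at $[\u]\in T_{\rho_0}\mathfrak{F}(\Sigma)$.

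The final step is to invoke infinitesimal length spectrum rigidity for convex cocompact Fuchsian representations. On the finite-dimensional real analytic Fricke space $\mathfrak{F}(\Sigma)$, finitely many length functions attached to a pants-type decomposition of the convex core of $\Sigma$ provide local coordinates, so the differentials $\{d\ell_\gamma\}_\gamma$ span the cotangent space at every point; consequently $[\u-\u']=0$, finishing the proof. The principal obstacle is the Goldman--Margulis variational identity above, which requires careful bookkeeping with the orientation convention for $\vw_{\rho_0(\gamma)}$ (see \S\ref{sec:roleOfOrientation}) and a direct computation in $\sltwoR$ showing that differentiating the trace extracts precisely $\ell'$. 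Once that formula is in hand, the input from length spectrum rigidity is classical in the convex cocompact setting.
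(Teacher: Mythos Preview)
Your argument is correct, but it takes a genuinely different route from the paper's.  The paper argues by induction on the rank: given the statement for $\F_n$ (with the base case $n=2$ cited from Drumm--Goldman~\cite{MR1906782}), it applies the induction hypothesis to three distinguished rank-$n$ subgroups $S_1,S_2,S_3\subset\F_{n+1}$, obtaining vectors $a_1,a_2,a_3\in\Rto$ with $\u'-\u=\delta a_i$ on $S_i$.  A short computation shows $a_i-a_j\in\Fix\bigl(\L(x_k)\bigr)$ for $\{i,j,k\}=\{1,2,3\}$; since these three fixed lines can be arranged to be in direct sum and the three differences sum to zero, all three vanish, giving a single coboundary over all of $\F_{n+1}$.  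This is elementary linear algebra once the rank-$2$ case is in hand.

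Your approach instead linearizes immediately, reduces to injectivity of $\overline\alpha$ on $\HHh^1(\Gamma_0,\Rto)$, and then invokes the Goldman--Margulis identity $\alpha=\dd\ell$ (which appears in the paper as Lemma~\ref{lem:length-formula}) together with the classical fact that finitely many trace (equivalently, length) functions give local coordinates on the character variety near an irreducible representation.  This is more conceptual and ties directly into the deformation-theoretic viewpoint of \S\ref{sec:Deformations}, but it imports two outside ingredients --- the variational formula and infinitesimal length-spectrum rigidity --- that the paper's inductive argument avoids.  One small imprecision: the phrase ``length functions attached to a pants-type decomposition'' undercounts the coordinates needed (twists must also be expressed through lengths of transversal curves), but the conclusion you actually use --- that $\{\dd\ell_\gamma\}_{\gamma\in\Gamma_0}$ span $T^*_{\rho_0}\mathfrak{F}(\Sigma)$ --- is correct and well known for convex cocompact $\rho_0$.
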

This was proved by Drumm-Goldman~\cite{MR1906782} for $n=2$,
to which we shall refer. 
We give below the modifications needed to prove this for general $n>2$.
Charette-Drumm~\cite{MR2106472} proved the stronger statement
without the assumption that $\rho$ and $\rho'$ have the same linear part,
only assuming that $\alpha \circ \rho = \alpha\circ \rho'$. 
See also Kim~\cite{MR2108366} and Ghosh~\cite{Gho19}.

Assume inductively the result for all free groups of rank at most $n$, 
where $n \ge 2$.
For $\Fnp = \langle x_1, x_2, ..., x_{n+1} \rangle$ 
consider the three $n$-generator subgroups
\[
\begin{array}{rcl}
S_1 & =& \langle  x_2,x_3, x_4, ...., x_{n+1} \rangle \\
S_2 & =& \langle x_1, x_3, x_4, ...., x_{n+1} \rangle \\
S_3 & =& \langle x_1, x_2, x_4, ...., x_{n+1} \rangle 
\end{array}
\]
In the following we will only be concerned with the generators $x_1, x_2, x_3$ 
which do not occur inside every such subgroup. Without loss of generality, we may choose the generators $x_1, x_2, x_3$ so that the $1$-eigenspaces of the linear parts $\L(x_1), \L(x_2),$ and $\L(x_3)$ do not have a non-trivial linear dependence. If this is not the case, we simply replace $x_1$ by $x_2 x_1 x_2^{-1}$ and the assumption will hold.

We consider two representations $\rho,\rho'$ of $\Fnp$ and their restrictions
to $S_i$ for $i=1,2,3$.
Denote the translational parts of $\rho$ and $\rho'$ by
\[
\u, \u' \in \Zz^1(\Fnp,\Rto) \]
respectively. 
Let $i=1,2$ or $3$. 
Since the Margulis invariants $\alpha, \alpha'$ agree, their restrictions to the
$n$-generator subgroup $S_i$ also agree. 
Thus the restrictions of $\u$ and $\u'$ to $S_i$ are cohomologous 
in $\Zz^1(S_i,\Rto)$.
That is, there exists $a_i\in\Rto$ so that
\begin{equation}\label{eq:CohomologousCocycles}
 \u'(\gamma) - \u(\gamma) = \delta(a_i) (\gamma) = a_i - \L(\gamma) a_i
\end{equation}
for $\gamma\in S_i$.

We show that the vector $a_2 - a_3$ lies in the fixed line $\Fix\big(\L(x_1)\big) = 
\Ker\big(\Id - \L(x_1)\big)$. 
Apply \eqref{eq:CohomologousCocycles} to $\gamma = x_1$ and $i=2,3$:
\[
a_2 - \L(x_1) a_2  = \u'(x_1) - \u(x_1) = a_3 - \L(x_1) a_3, \]
from which follows:
\[
a_2 - a_3 = \L(x_1) (a_2 -a_3) \]
as claimed. 
Similarly,
$a_3 - a_1 \in \Fix\big(\L(x_2)\big)$ and 
$a_1 - a_2 \in \Fix\big(\L(x_3)\big)$.
By our assumption above, the three lines $\Fix\big(\L(x_i)\big)$ for $i=1,2,3$ are not coplanar, 
so in particular they form a direct sum decomposition of $\R^3$.
Observing that
\[ 
(a_2 - a_3) + (a_3 - a_1)  + (a_1 - a_2)  = 0, \]
we deduce that the vectors 
$a_2 - a_3$, $a_3 - a_1$, $a_1 - a_2$
must each be zero. 
Thus the vectors $a_1 = a_2 = a_3$ are all equal, and~\eqref{eq:CohomologousCocycles} 
holds over the entire group $\F_{n+1}$.

\subsection{Further remarks on the Margulis length spectrum}\label{sec:misc} \ 
\newline Charette--Goldman~\cite{MR3732683} proved an analog of McShane's identity~\cite{MR1625712}, 
a relation on the marked length spectrum for hyperbolic punctured tori. 

If a discrete group $\Gamma$ of affine isometries with hyperbolic linear part 
acts properly on $\E$, 
then the Margulis invariants $\alpha(\gamma)$ are either all positive or all negative.
In general, infinitely many positivity conditions are needed to ensure 
properness (but see \S\ref{sec:Classification} for the two examples of $\Sigma_0$
where only finitely many conditions suffice). 
Charette~\cite{MR2206247} found a sequence of affine deformations 
$\rho_n$ of a two-generator Fuchsian group $\Gamma_0$ with the following property: 
for any given integer~$n$,
\begin{itemize}
\item $\alpha\big(\rho_n(\gamma)\big) > 0$  for all $\gamma \in \Gamma_0$ with word length less than $n$;
\item $\alpha\big(\rho_n(\gamma')\big) < 0$ for some $\gamma' \in \Gamma_0$.
\end{itemize}
Using {\em strip deformations\/}, Minsky~\cite{Lams} explicitly showed there exist free groups with convex cocompact linear part 
with the property that the Margulis invariants of all elements have one sign but which do not act properly on $\E$. 
See the discussion of Theorem~\ref{thm:strips} in Section~\ref{sec:strips}. 

The sign of an affine deformation is undefined for elliptic affine transformations.
Charette-Drumm~\cite{MR3180618}
extended Margulis's sign   to {\em parabolic\/} affine transformations. 
A parabolic element $\gamma$ of $\SOto$ fixes no spacelike vectors and 
no closed geodesic has holonomy $\gamma$.
Charette and Drumm find a subspace of null vectors fixed by $\gamma$ with a natural orientation, 
and extend the {\em sign\/} of the Margulis invariant to $\gamma$.
Lemma~\ref{lem:alpha} can be  adapted to 
parabolic transformations. 
Moreover, Theorem~\ref{thm:OppositeSign}
extends  to the case where either or both transformations are parabolic,
using this extension of Margulis's invariant.

\section {Diffusing the Margulis invariant}

In this section we describe the extension of Margulis's marked Lorentzian length spectrum
to the space of geodesic currents and state the properness criterion of 
Goldman-Labourie-Margulis which leads to a description of the deformation space of 
Margulis spacetimes associated to a given hyperbolic surface.

\subsection{Normalizing the Margulis invariant}\label{Normalizing}
The (signed) Margulis invariant and the geodesic length function enjoy the
same homogeneity \big(Lemma~\ref{lem:alpha}, (\ref{alphan})\big):
\begin{align*}
\ell(\gamma^n) = \vert n \vert \ell(\gamma) \\
\alpha(\gamma^n) = \vert n \vert \alpha(\gamma) \end{align*}
Thus the quotient 
\begin{align*} 
\Gamma_0 & \xrightarrow{~\halpha~} \R \\
\gamma &\longmapsto   \frac{\alpha(\gamma)}{\ell(\gamma)}
\end{align*}
is constant on cyclic subgroups of $\Gamma \cong \pi_1(\Sigma)$.

\newcommand{\HOneG}{\HHh^1(\Gamma_0,\Rto)} 

Cyclic hyperbolic subgroups of $\pi_1(\Sigma)$ correspond to closed geodesics
on $\Sigma$. 
Closed geodesics on $\Sigma$  correspond to periodic trajectories of the geodesic 
flow $\Phi$ on the unit tangent bundle $U\Sigma$,
and hence determine
$\Phi$-invariant probability measures on $U\Sigma$ supported on the velocity vector 
field of the closed geodesic.

Recall that a {\em geodesic current\/} on $\Sigma$ is a 
$\Phi$-invariant probability measure on $U\Sigma$.
See Bonahon~\cite{MR931208}.
The convex set $\Cc(\Sigma)$ of all geodesic currents is equipped with the weak-* 
topology. It is compact if 
$\Sigma$ has compact convex core. 
Geodesic currents corresponding to closed geodesics are dense in $\Cc(\Sigma)$.

For a fixed affine deformation $\rho$ of a Fuchsian representation $\rho_0$,
the above function $\halpha$ extends to a continuous map
\[
\Cc(\Sigma) \xrightarrow{~\halpha~} \R.\]
Moreover, if we let the $\rho=\rho_{[\vu]}$ vary over the space
$\HOneG$
of affine deformations:

\begin{thm} [Goldman-Labourie-Margulis~\cite{MR2600870}]\label{thm:GLM}
Fix a hyperbolic surface $\Sigma$ with holonomy representation $\rho_0$. 
\begin{itemize}
\item 
There exists a continuous map
\[
\HOneG
\times \Cc(\Sigma) \xrightarrow{~\Psi~} \R \]
such that for a fixed affine deformation $\rho$ corresponding to
$[\u]\in \HOneG$
and an element $\gamma\in \Gamma$ corresponding to an $\Phi$-invariant probability
measure $\mu$,
\[
\Psi([\vu], \mu)  = \halpha_{\rho} (\gamma) \]
as above.
Furthermore this function is {\em bi-affine\/} with respect to the linear
structure on $\HOneG$ and the affine structure on
$\Cc(\Sigma)$.
\item The affine deformation $\rho = \rho_{[\vu]}$ is proper if and only if 
the image 
$\Psi\Big( \{\vu\} \times \Cc(\Sigma)\Big)$ is bounded away from $0$.
\end{itemize}
\end{thm}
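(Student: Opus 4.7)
The plan is to build $\Psi$ by representing $\halpha$ as the average, against a flow-invariant measure on $U\Sigma$, of a continuous function attached to the cohomology class $[\u]$. First I would construct, for any cocycle representative $\u \in \Zz^1(\Gamma_0,\Rto)$ of $[\u]$, a continuous $\Gamma_0$-equivariant function $\tilde F_\u : U\Ht \to \R$. The key ingredient is the continuous equivariant map $W : U\Ht \to \Rto$ sending $(p,v)$ to the unit-spacelike Killing vector field in $\sltwoR \cong \Rto$ whose axis is the oriented geodesic through $(p,v)$; this is the pointwise analog of the vector $\vw_A$ from Section~\ref{sec:AffDefCylinders}. Choosing a smooth $\Rto$-valued $0$-cochain $V$ on $\Ht$ (i.e.\ a vector field) whose affine coboundary under $\L$ equals $\u$, I would set $\tilde F_\u(p,v) := W(p,v) \cdot V(p)$. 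Formula \ref{lem:alpha}(\ref{alphainv}) together with the fundamental theorem of calculus along the axis of a hyperbolic $\gamma$ shows $\int_0^{\ell(\gamma)} \tilde F_\u(\Phi_t(p,v)) \, dt = \alpha(\rho(\gamma))$ for any $(p,v)$ on that axis, so the induced function $F_\u$ on $U\Sigma$ satisfies $\int_{U\Sigma} F_\u \, d\mu_\gamma = \halpha(\gamma)$ for the periodic probability measure $\mu_\gamma$. A different choice of $V$ changes $\tilde F_\u$ by a Lie derivative along the geodesic flow, whose integral against any $\Phi$-invariant probability measure vanishes, so $\Psi([\u],\mu) := \int_{U\Sigma} F_\u \, d\mu$ depends only on the cohomology class.

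The continuity and bi-affinity then follow almost formally. Continuity in $\mu$ is the definition of the weak-$*$ topology applied to the continuous bounded test function $F_\u$; continuity in $[\u]$ comes from the linear dependence $\u \mapsto F_\u$, together with the fact that the coboundary subspace is represented by functions whose integral against every $\Phi$-invariant probability measure vanishes. Bi-affinity follows at once: $[\u] \mapsto \Psi([\u],\mu)$ is linear by construction, and $\mu \mapsto \Psi([\u],\mu)$ is the restriction to the convex set $\Cc(\Sigma)$ of a linear functional on signed measures, hence affine.

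For the properness criterion, one direction is the easier one. Suppose $\Psi([\u],\Cc(\Sigma))$ is not bounded away from $0$. Its image is connected (continuous image of a convex set) and contains the values $\halpha(\gamma)$ on a dense subset of $\Cc(\Sigma)$. Either the image meets both open half-lines $\R^+$ and $\R^-$, in which case one finds hyperbolic elements with Margulis invariants of opposite signs and Theorem~\ref{thm:OppositeSign} forbids properness; or the image accumulates at $0$ from one side, in which case density gives hyperbolic $\gamma_n$ with $\alpha(\rho(\gamma_n))/\ell(\gamma_n) \to 0$, and a careful reading of Margulis's displacement estimate (Section~\ref{sec:OriginalProof}), which ties the minimum Euclidean displacement by $\rho(\gamma)$ to $|\alpha(\rho(\gamma))|$ once the hyperbolicity of $\L(\gamma)$ is under control, produces group elements moving a fixed compact set by arbitrarily small amounts, contradicting properness. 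The hard direction is the reverse: assuming after an orientation flip that $\Psi([\u],\mu)\ge c>0$ uniformly on $\Cc(\Sigma)$, one gets the uniform inequality $\alpha(\rho(\gamma)) \ge c\,\ell(\gamma)$ on all hyperbolic elements, and the task is to promote this per-element length-vs-invariant bound to a global properness statement. The main obstacle is precisely this implication: turning a qualitative uniform lower bound on the normalized invariant into a quantitative displacement estimate that grows with word-length along every geodesic segment, not merely along closed orbits. Margulis's near-additivity \eqref{eq:scalar_margulis_invariant_additivity} for pairs of sufficiently contracting transverse elements, combined with the compactness of $\Cc(\Sigma)$ (in the convex cocompact case) used to upgrade pointwise positivity to a uniform bound, is what drives this step; the technical heart of \cite{MR2600870} lies here.
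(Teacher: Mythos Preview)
Your construction of $\Psi$ is along the right lines and matches the Goldman--Labourie--Margulis approach in spirit: one chooses an equivariant section (your $V$, their $s$) of the flat affine bundle, pairs its variation with the neutral direction (your $W$, their $\nu$), and integrates against the current. But the formula $\tilde F_\u(p,v) = W(p,v)\cdot V(p)$ is wrong as written. First, it does not descend to $U\Sigma$: under $\gamma\in\Gamma_0$ one has $W\mapsto \L(\gamma)W$ and $V\mapsto \L(\gamma)V + \u(\gamma)$, so $\tilde F_\u$ picks up the extra term $W(p,v)\cdot\L(\gamma)^{-1}\u(\gamma)$. Second, the fundamental theorem of calculus you invoke, combined with Lemma~\ref{lem:alpha}(\ref{alphainv}), actually gives
\[
\alpha\big(\rho(\gamma)\big) \;=\; \vw_\gamma\cdot\big(V(\gamma q)-V(q)\big) \;=\; \int_0^{\ell(\gamma)} W\cdot \partial_t V\,dt,
\]
so the integrand must be the covariant derivative $W\cdot d_{\varphi_t}V$, which is exactly the paper's $\langle d_{\varphi_t}s,\nu\rangle$. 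Your remark that changing $V$ alters $\tilde F_\u$ by a flow-derivative (hence by something with zero average against every invariant measure) is correct only for this corrected integrand.

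For the properness criterion, the paper takes a different route from the one you sketch. You argue the ``easy'' direction via the Opposite Sign Lemma and Margulis's Euclidean displacement estimates; but in this paper the Opposite Sign Lemma is \emph{derived} from Theorem~\ref{thm:GLM}, so invoking it here is at best expositionally backwards. More substantively, the organizing idea in~\cite{MR2600870}, which you do not mention, is to reformulate properness of the $\Gamma$-action on $\Eto$ as properness of the lifted geodesic flow $\Phi_t$ on the flat affine bundle $\E_\rho = (\Eto\times U\Ht)/\Gamma$. Both implications are then handled through this flow: if some current $\mu$ has $\halpha(\mu)=0$, a suitably modified section carries the support of $\mu$ to a compact subset of $\E_\rho$ that $\Phi_t$ never moves off itself, so the flow (hence the action) is improper; conversely, if the action is improper then so is $\Phi_t$, and one finds arbitrarily long flow segments making arbitrarily small progress in the fiber, from which a current with $\halpha=0$ is extracted in the limit. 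Your sketch of the ``hard'' direction via per-element displacement bounds and near-additivity is not how the argument goes, and without the bundle-flow equivalence it is unclear how to close it.
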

Since probability measures supported on periodic trajectories are dense,
properness is equivalent to bounding $\halpha(\gamma)
= \alpha(\gamma)/\ell(\gamma)$ away from zero. 
In particular,  
Minsky's \cite{Lams} construction of non-proper affine deformations 
has the property that every element has the same sign, 
but there is a sequence of elements whose normalized
Margulis invariants approach zero.

Theorem~\ref{thm:GLM} immediately implies 
the Opposite Sign Lemma (Theorem~\ref{thm:OppositeSign}) as follows.
Suppose that $\alpha(\gamma_1) < 0 < \alpha(\gamma_2)$. 
Let $\mu_i$ denote the invariant probability measure corresponding to $\gamma_i$.
Then 
\[
\halpha(\mu_1) = \frac{\alpha(\gamma_1)}{\ell(\gamma_1)} 
< 0 < 
\frac{\alpha(\gamma_2)}{\ell(\gamma_2)} = \halpha(\mu_2). 
\]
Since $\Cc(\Sigma)$ is convex, 
 a continuous path $\mu_t$ (for $1\le t\le 2$) joins $\mu_1$ to $\mu_2$.
Continuity of $\halpha$ and 
the Intermediate Value Theorem imply that $\halpha(\mu_t) = 0$ 
for some $1< t < 2$ and  $\mu_t\in\Cc(\Sigma)$.
By Theorem~\ref{thm:GLM}, the affine deformation is not proper.

Let
us briefly contrast the properness criterion of Theorem~\ref{thm:GLM} with the
properness criterion of Benoist~\cite{MR1418901} and Kobayashi~\cite{MR1424629} for
reductive homogeneous spaces. 
In the setting of reductive homogeneous spaces $G/H$, properness of the action of a discrete group $\Gamma < G$ is characterized by the behavior of the Cartan projection (singular values) of $\Gamma$, specifically that the Cartan projection of $\Gamma$ goes away from the Cartan projection of $H$. 
There is no known analogue of this simple criterion in non-reductive settings, such as Minkowski geometry $\Eto$. In Minkowski geometry, the Margulis invariant of an element is less like a Cartan projection, and more like an infinitesimal Jordan projection (eigenvalues). 
The work of Danciger-Gu\'eritaud-Kassel~\cite{MR3465975} interprets an action of a group $\Gamma$ on Minkowski space $\Eto$ as an infinitesimal action on the $3$-dimensional anti-de Sitter space, the model for constant negative curvature Lorentzian geometry in dimension three. Note that anti-de Sitter space is a reductive homogeneous space, so the Benoist-Kobayashi properness criterion applies. However, Kassel~\cite{kasPhD} and Gu\'eritaud-Kassel~\cite{MR3626591} give a different properness criterion in terms of uniform behavior of the Jordan projections. 
The new proof of Theorem~\ref{thm:GLM} given in Danciger-Gu\'eritaud-Kassel~\cite{MR3465975} interprets the uniform behavior of the Margulis invariant as an infinitesimal analogue of the Gu\'eritaud-Kassel properness criterion in anti-de Sitter geometry. 
See the discussion in \S\ref{sec:Lipschitz}.

\subsection{Classification of Margulis spacetimes}\label{sec:Classification}
Theorem~\ref{thm:linearholonomy} implies that
proper affine deformations exist, 
whenever $\Sigma = \Gamma_0\backslash\Ht$ is  noncompact.
Another consequence of Theorem~\ref{thm:GLM} is a determination of the deformation space of Margulis spacetimes
as a convex domain.
Theorem~\ref{thm:GLM} implies that the space of {\em all\/}  
proper affine deformations of $\Gamma_0$ equals the subspace of $\HOneG$ 
comprised of $[\vu]$ such that $\Phi(\big[\vu], \mu\big)$ is either always positive or always 
negative, for all $\mu\in \Cc(\Sigma)$.

The positive affine deformations, those $[\vu]$ for which $\Phi(\big[\vu], \mu\big) > 0$ 
for all $\mu \in \Cc(\Sigma)$, form an open and convex cone. 
This cone is the interior of the intersection over $\gamma \in \Gamma$ of the set of halfspaces 
defined by $\alpha_{[\vu]}(\gamma) > 0$. 
In fact, it suffices to take this intersection over $\gamma$ corresponding to
{\em simple\/} loops on $\Sigma$. 
See Goldman-Labourie-Margulis-Minsky~\cite{Lams} and Danciger-Gu\'eritaud-Kassel~\cite{MR3465975}. 
Indeed, properness is implied by positivity (or negativity) of the Margulis invariant $\Psi([\vu], \mu)$ over all 
\emph{measured laminations} $\mu$ (those currents with self-intersection zero).

The following four figures depict the deformation space for four hyperbolic surfaces $\Sigma$ representing the four different topological types for which $\chi(\Sigma) = -1$,
or equivalently  $\pi_1(\Sigma) \cong \Ft$.  
Although in these cases 
\[ \dim~\HOneG = 3,  \]
we may projectivize the set and draw the image of this cone in the $2$-dimensional projective space 
$\P\big(\HOneG\big)$.
The lines drawn in these pictures are defined by $\alpha(\gamma) = 0$,
where $\gamma$ is a {\em primitive\/} element of $\Ft$ 
(that is, an element living in a free basis of $\Ft$).
However, only in the case of the one-holed torus, 
do the primitive elements correspond to simple nonseparating loops.

\begin{figure}[h]
\centering
\subfigure{\includegraphics[scale=.45]{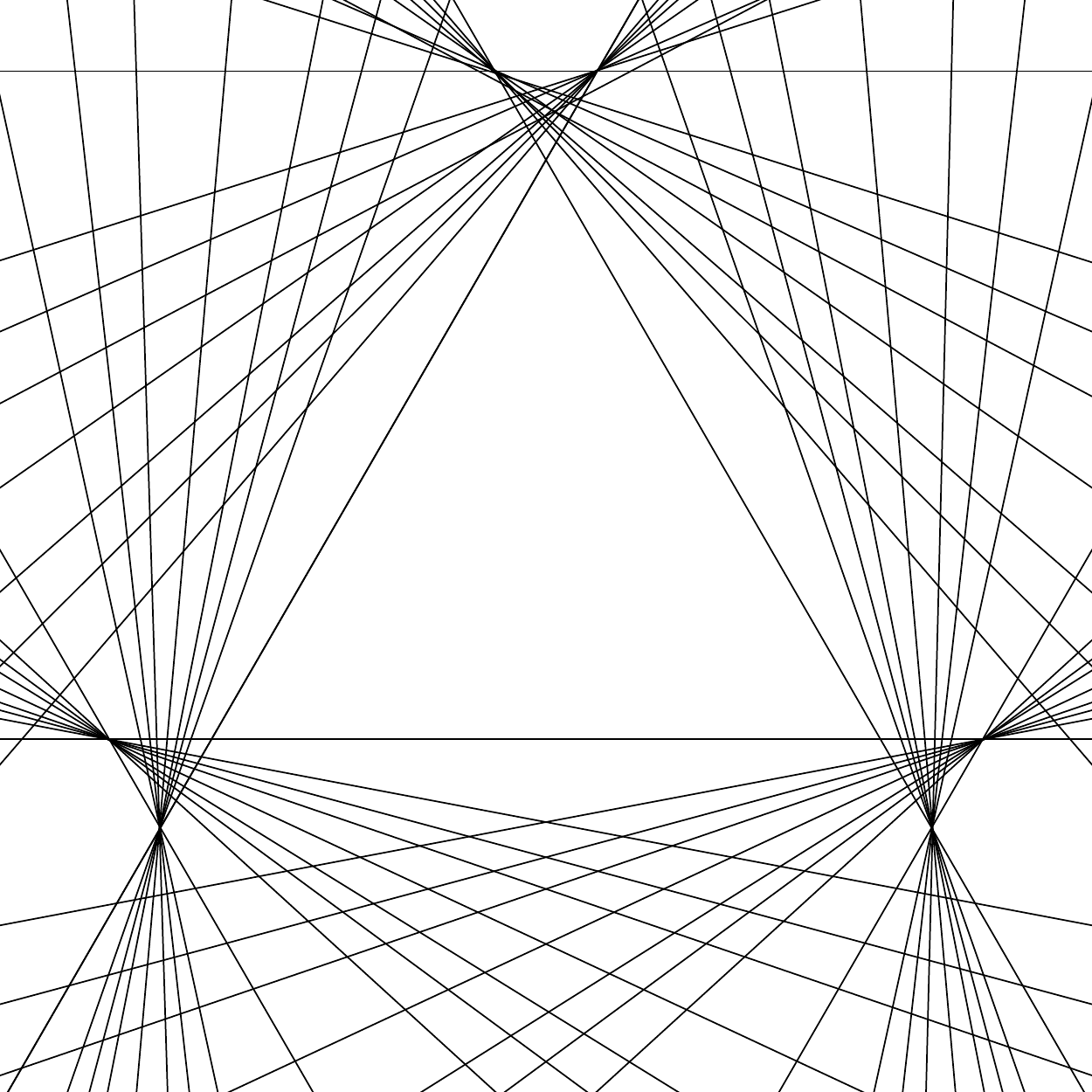}
} \qquad
\subfigure{\includegraphics[scale=.45]{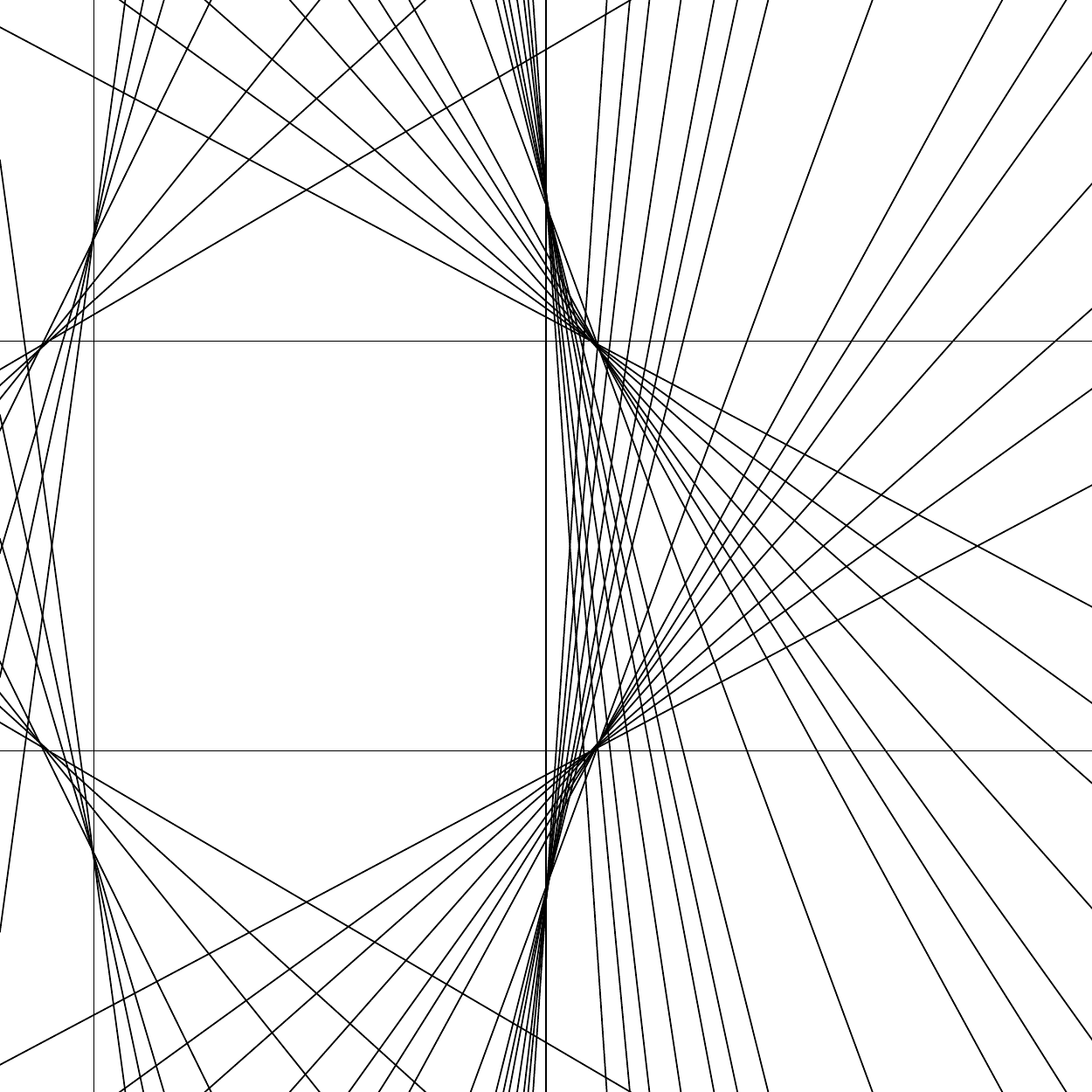}
}
\caption{\label{fig:TwoGeneratorDefSpaces1}Deformation spaces for the three-holed sphere and two-holed 
cross-surface (projective plane)}
\end{figure}
\begin{figure}[h]
\centering
\subfigure{\includegraphics[height=5.7cm,width=5.7cm]{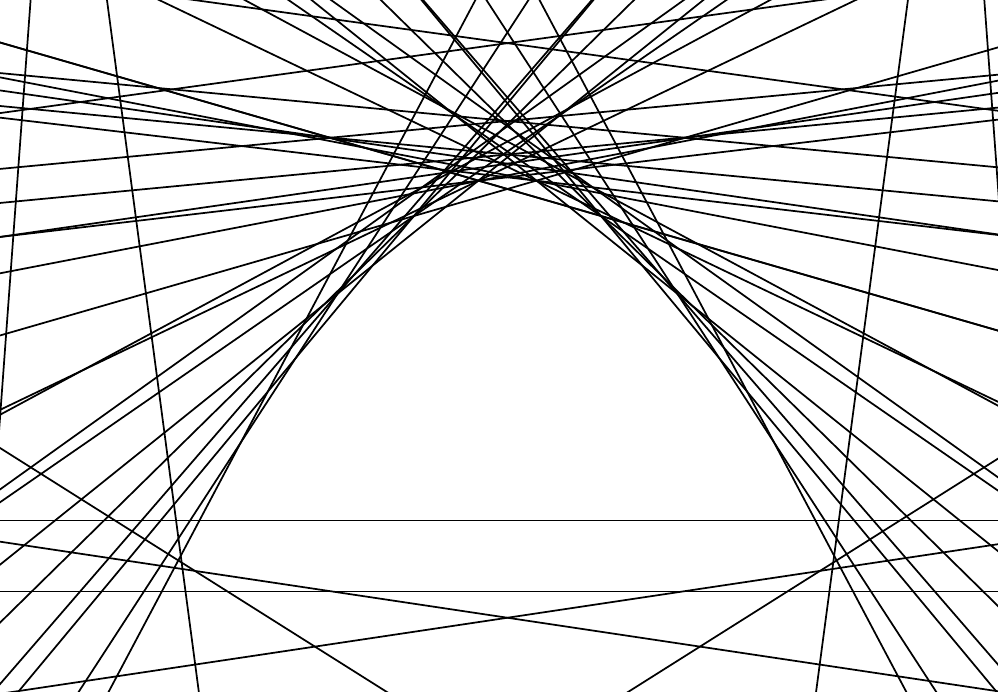}
} \qquad
\subfigure{\includegraphics[height=5.7cm,width=5.7cm]{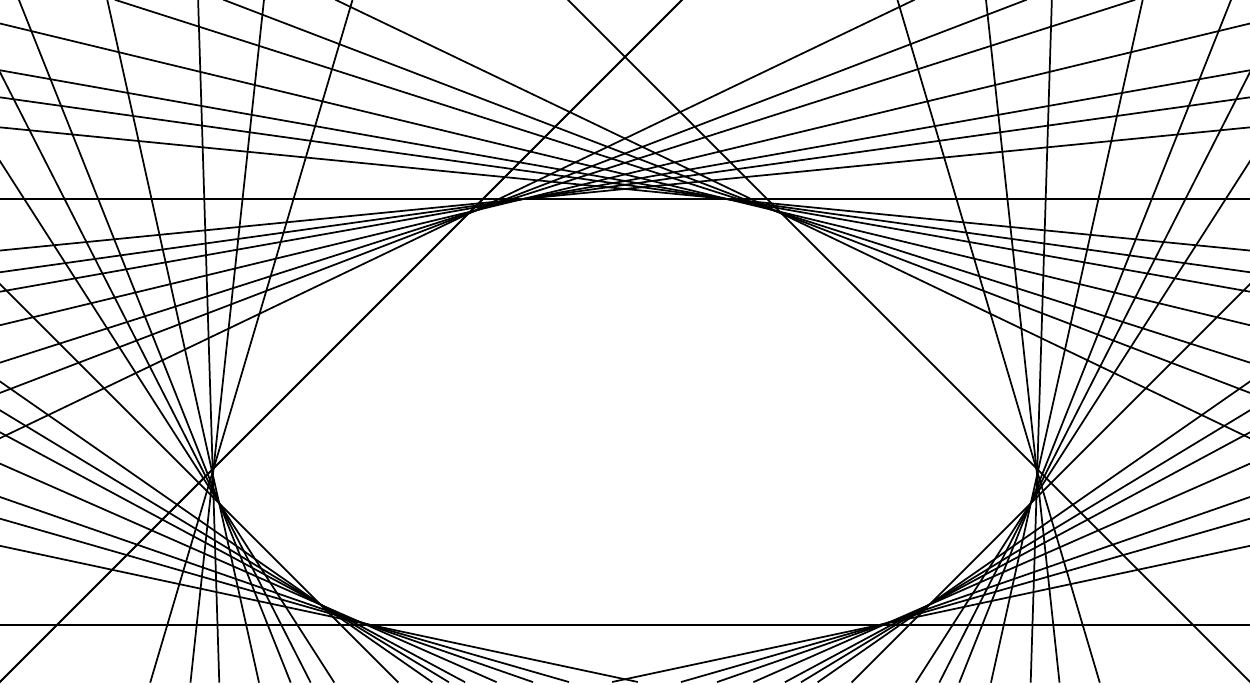}
}
\caption{\label{fig:TwoGeneratorDefSpaces2}Deformation spaces for the one-holed Klein bottle and one-holed torus}
\end{figure}

\subsection{Dynamical ideas from the proof of Theorem~\ref{thm:GLM}}

Let us sketch some ideas from Goldman-Labourie-Margulis~\cite{MR2600870}. 
The basic idea of the properness criterion in Theorem~\ref{thm:GLM} is to translate properness of the discrete group action into a question of properness of a continuous flow.
Consider the flat affine bundle $\E_\rho \to U\Sigma$, defined as the quotient $(\Eto \times U \Ht)/\Gamma$ where $\Gamma$ acts by $\rho = \rho_{[\vu]}$ in the $\Eto$ factor and by $\rho_0$ on the $U \Ht$ factor. The geodesic flow $\varphi_t$ on $U \Sigma$ lifts, via the flat connection, to a flow $\Phi_t$ on $\E_\rho$. Then the properness of the $\rho$-action of $\Gamma$ on $\Eto$ is equivalent to properness of the flow $\Phi_t$ on $\E_\rho$.
To determine properness of this flow, it suffices to consider only the recurrent part $U_{rec} \Sigma$ of the unit tangent bundle $U \Sigma$. 

Let us describe how to extend the Margulis invariant function to the space of currents.
Let $s: U \Sigma \to \E_\rho$ be a smooth section. Using the flat connection, one may measure how much $s$ changes along a path in the unit tangent bundle. The Margulis invariant $\alpha(\gamma)$ of an element $\gamma \in \Gamma$ is equal to the amount the section $s$ changes in the direction of the translation axis $\Axis(\rho(\gamma))$ after going once around the geodesic representative for $\gamma$ in $U \Sigma$. 
To generalize this, for $v \in U_{rec} \Sigma$, we may ask how much the section $s$ changes in the \emph{neutral direction} $\nu(v)$ along (some finite piece of) a trajectory $\Phi_t v$ of the geodesic flow. Here $\nu$ is the canonical section of the associated vector bundle $V_\rho = (\Rto \times U \Ht )/\Gamma$ which maps a tangent vector $v$ to the spacelike unit vector $\nu(v)$ dual to the geodesic in $\Sigma$ tangent to $v$. 
Hence, to each point $v \in U \Sigma$ we may associate the neutral variation $\langle d_{\varphi_t} s, \nu \rangle$ of $s$ in the direction of the flow, a real valued function on $U \Sigma$.  
The definition of the Margulis invariant $ \halpha(\mu)$ of a geodesic current $\mu$ on $\Sigma$ is simply the integral over $U_{rec}$ of this function agains the measure $\mu$:
\[ \halpha(\mu) := \int_{U_{rec}}  \langle d_{\varphi_t} s, \nu \rangle d\mu. \]
It can be shown that the definition does not depend on the choice of section $s$. Indeed, the functional $\mu \mapsto \halpha(\mu)$ is continuous and on the dense subset of currents $\mu_\gamma$ which are supported on a closed geodesic~$\gamma$, the definition gives $\halpha(\mu_\gamma) = \halpha(\gamma)$.

The properness criterion is proved as follows. On the one hand, if there is a current $\mu$ with $\halpha(\mu) = 0$, then the section $s$ (or rather a modified section whose variation along flow lines is only in the neutral direction) takes the support of $\mu$ to a compact subset of $\E_\rho$ which is not taken away from itself by the flow $\Phi_t$, hence the flow is not proper, and hence the action of $\Gamma$ on $\Eto$ is not proper.  Conversely, if the action of $\Gamma$ on $\Eto$ is not proper, then the flow $\Phi_t$ is not proper and it is possible to find a sequence of longer and longer flow lines which make less and less progress in the fiber with respect to the flat connection. One constructs a geodesic current with zero Margulis invariant in the limit. 

\subsection{Dynamical structure of Margulis spacetimes}

While
we do not explore the ideas here, we mention some further work on the dynamical structure of Margulis spacetimes.

In \cite{MR2901364}, 
Goldman and Labourie show that the union of closed geodesics in a Margulis spacetime
with convex cocompact linear holonomy is dense in the projection of the {\em nonwandering
set\/} for the geodesic flow. 
(Goldman and Labourie call the projections of the nonwandering orbits {\em recurrent.\/}) 
In other words, a Margulis spacetime with convex cocompact linear holonomy group has a compact ``dynamical core". 
This is completely analogous to the behavior of nonwandering orbits for the geodesic flow
for a convex cocompact hyperbolic surface.

Further, Ghosh~{\cite{MR3668058} proves an Anosov property for the geodesic flow in the dynamical core
of a Margulis spacetime.
From that he constructs a {\em pressure metric\/} on the moduli space  (\cite{MR3770278}), 
analogous to the pressure metric defined in higher Teichm\"uller theory constructed by
Bridgeman-Canary-Labourie-Sambarino~\cite{MR3833339,MR3385630}.

\section{Affine actions and deformations of geometric structures}\label{sec:Deformations}
The finer structure of the classification of Margulis spacetimes
is deeply tied to the theory of {\em infinitesimal deformations\/} of hyperbolic surfaces.
The connection comes from the low-dimensional coincidence that the standard 
representation of $\SOto$ acting on $\Rto$ is isomorphic to the 
\emph{adjoint representation} of $\SOto$ acting on its Lie algebra~$\mathfrak{so}(2,1)$. 
In general, if $G$ is a Lie group, affine actions with linear part in the adjoint 
representation of $G$ are in direct correspondence with infinitesimal deformations 
of representations into $G$, which in many cases correspond to deformations of 
geometric structures modeled on some homogeneous space of $G$. The dynamics of 
the affine action is often closely related to the geometry of the associated 
deformation of geometric structures. 
For the case of $G = \SOto$ corresponding to Margulis spacetimes, 
Danciger--Gu\'eritaud--Kassel~\cite{MR3465975} showed that 
properness of the affine deformaion  %
is equivalent to the condition that  
the associated deformation of hyperbolic structure is \emph{contracting}, 
in a sense to be made precise later in this section.
 
\subsection{Affine actions and deformation theory}\label{subsec:deform}

For the moment, let us work in the general setting that $G$ is an arbitrary finite dimensional Lie group.
Recall that the Lie algebra  $\gg$ of $G$ is the Lie subalgebra of $\Vect(G)$ consisting  of {\em right-invariant vector fields\/}  
on $G$.  
These vector fields generate   flows by left-multiplication by one-parameter subgroups of $G$. 
The action of $G$ on itself by left-multiplication induces a (left-) action on $\Vect(G)$.
Since left- and right-multiplication commute,
the left-action on $\Vect(G)$ preserves $\gg \subgroup \Vect(G)$.
The resulting action is the adjoint representation 
\[
G \xrightarrow{~\Ad~} \Aut(\gg).
\]

The linear action of $G$ on $\mathfrak g$ extends to an affine action of the semi-direct product $\Ggg$ 
where $\gg$ acts by translations:
\begin{equation}\label{eqn:Ad-affine-action}
\vv \xmapsto{~(g,\u)~}   \Ad(g) \vv + \u.
\end{equation}
The adjoint action of $G$ on $\gg$ preserves the Killing form $B (\cdot, \cdot)$. 
If $G$ is semisimple, which   will be the case in all of our applications, 
$B$ is a non-degenerate symmetric bilinear form of indefinite signature $(p,q)$. 
Hence the affine action of $\Ggg$ on $\gg$ is by isometries of a 
flat pseudo-Riemannian metric of signature $(p,q)$. This gives a map
\begin{align}\label{eq:map}
\Ggg \xrightarrow[]{\Phi_G} \Isom(\E^{p,q}),
\end{align} which maps $G$ into the stabilizer of a point, a copy of $\SO(p,q)$, 
and maps $\gg$ to the translation subgroup $\R^{p,q}$.

Affine actions of the form~\eqref{eqn:Ad-affine-action} closely relate to infinitesimal deformations 
of geometric structures and representations.
The affine group $\Ggg$ is naturally isomorphic to the total space of the tangent bundle 
$\T G$ of the Lie group~$G$, under the map which associates an element 
$g \in G$ and $\u \in \gg$ to the evaluation $\u_g \in \T_g G$ of $\u$ at $g$:
\begin{align}\label{eqn:iso}
\T G \cong \Ggg.\end{align}

Given a representation $\rho_0: \Gamma \to G$ of a discrete group $\Gamma$ in 
$G$, an \emph{infinitesimal deformation} of $\rho_0$ is a homomorphic lift 
$\rho$ to the tangent bundle~$\T G$:
$$\xymatrix{& \T G\ar[d]^{\Pi_G}\\ \Gamma\ar[r]^{\rho_0}\ar[ur]^{\rho} & G}.$$
Infinitesimal deformations arise naturally as tangent vectors to paths in 
the analytic set $\Hom(\Gamma, G)$.
Indeed, if $\rho_t \in \Hom(\Gamma, G)$ is a smooth path, then 
\begin{align}\label{eqn:derivative}
\rho(\gamma) := \frac{\dd}{\dd t}\Big|_{t=0} \rho_t(\gamma) \ \in \ \T_{\rho_0(\gamma)}G
\end{align}
defines an infinitesimal deformation. 
Using the isomorphism~\eqref{eqn:iso}, 
an infinitesimal deformation $\rho$ of a fixed representation 
$\rho_0 \in \Hom(\Gamma, G)$ is  efficiently described as a cocycle 
\[ \u\in\Zz^1(\Gamma,\gg_{\Ad\rho_0}), \] 
where $\gg_{\Ad\rho_0}$ denotes the $\Gamma$-module defined by the composition
\[
\Gamma\xrightarrow{~\rho_0~} G \xrightarrow{~\Ad~} \Aut(\gg). 
\]
(Compare Raghunathan~\cite{MR0507234}, \S VI.) 

We refer to  cocycles in $\Zz^1(\Gamma,\gg_{\Ad\rho_0})$ as \emph{deformation cocycles}. 
When $\rho$ is the derivative of a conjugation path 
\[ \rho_t(\cdot) = g_t \rho_0(\cdot) g_t^{-1},\] 
where $g_t$ is a smooth path in $G$ based at the identity, 
the associated cocycle $\u$ is the coboundary $\delta v$, 
where $v\in\gg$ extends the tangent vector
\[ \frac{\dd}{\dd t}\Big|_{t=0} g_t\in \T_e(G).\]
The set $\Bb^1(\Gamma, \gg_{\Ad\rho_0})$ of such coboundaries makes up the 
{\em infinitesimal conjugations}, or trivial infinitesimal deformations. The 
cohomology group $\mathsf{H}^1(\Gamma,  \gg_{\Ad\rho_0})$ describes the 
equivalence classes of infinitesimal deformations up to infinitesimal 
conjugation. For further details, see also Sikora~\cite{MR2931326} or Labourie~\cite{MR3155540}. 
  
\subsection{Margulis invariants and length functions}\label{sec:MargulisLength}

For the remainder of this section (except Section~\ref{sec:racg}) 
we consider the specific case when 
\[ G := \Isom(\Ht) \cong \PGLtwoR \cong \SOto,  \]
\[ \gg :=  \sltwoR \cong \soto,\]
and $\Gamma_0 
\xhookrightarrow{~\rho_0~} G$
is the inclusion of a finitely generated, torsion-free, discrete subgroup 
corresponding to the hyperbolic surface $\Sigma = \Gamma_0 \backslash \Ht$. 

Since the adjoint action of $G$ on $\gg$ is isomorphic to the standard 
representation of $\SOto$ on $\Rto$, 
the action of $\Isom^+(\Eto)$ on $\Eto$ identifies with the 
affine action of $\Ggg$ on the Lie algebra $\gg$: 
in other words, the map 
\[
\Ggg \xrightarrow[]{\Phi_G} \Isom^+(\Eto)
\]
from~\eqref{eq:map} is an isomorphism.
In particular, a cocycle in 
\[ \Zz^1(\Gamma_0,\Rto_{\rho_0}) \cong \Zz^1(\Gamma_0,\soto_{\Ad}) \]
corresponds both to an affine deformation $\Gamma_\u$ of $\Gamma_0$,
and to an infinitesimal deformations of the representation 
$\Gamma_0 \xhookrightarrow{~\rho_0~} G$.
By the Ehresmann-Weil-Thurston principle (see Goldman~\cite{History}), 
infinitesimal deformations of $\rho_0$ correspond to infinitesimal 
deformations of the hyperbolic structure on $\Sigma$.
Goldman-Margulis~\cite{MR1796129}  observed the first key entry in the dictionary 
between the dynamics of the affine action and the geometry of the 
associated infinitesimal deformation. 

Recall the geodesic displacement function $G \xrightarrow[]{~\ell~} \R_{\geq 0}$ 
defined in \eqref{eq:GeodesicDisplacementFunction}. Its restriction to the 
hyperbolic elements of $G$ (an open subset) is a smooth function, whose 
differential we denote by $\T G \xrightarrow[]{~\dd \ell~} \R$.

\begin{lemma}[\cite{MR1796129}]\label{lem:length-formula}
Let \[ (g,\u) \in \Ggg = \T G\] be an infinitesimal deformation 
of the hyperbolic element $g \in G$, and let $\Phi_G(g,\u)$ be the 
corresponding orientation preserving affine isometry of $\Eto$. 
Then the Margulis invariant (see Section~\ref{sec:MargInv}) 
equals %
the derivative of the length. That is,
\[
\alpha\big(\Phi_G(g,\u)\big) = \dd \ell (g,u). \]
\end{lemma}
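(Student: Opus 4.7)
My approach is to reduce the identity to a single explicit check by exploiting linearity and invariance. For fixed hyperbolic $g$, both sides of the claimed identity are linear functions of $u \in T_g G \cong \mathfrak{g}$ (using the right-invariant trivialization underlying the isomorphism $TG \cong G \ltimes \mathfrak{g}$): the right-hand side is the differential of the smooth function $\ell$ near $g$ (hyperbolicity being an open condition), and the left-hand side is given by the explicit formula $\alpha(\Phi_G(g,u)) = w_{\mathrm{Ad}(g)} \cdot u$ recalled earlier.

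Next, I would show that both linear functionals vanish on the coboundary subspace $(1 - \mathrm{Ad}(g))\mathfrak{g} \subset \mathfrak{g}$. The function $\ell$ is a class function on $G$, so its differential vanishes on tangent directions to the conjugation orbit through $g$, which in the right trivialization are exactly the coboundaries. The Margulis invariant $\alpha$ is conjugation invariant by Lemma~\ref{lem:alpha}~(3), so $\alpha\circ\Phi_G$ also vanishes on coboundaries; alternatively, this can be checked directly from $(v - \mathrm{Ad}(g)v)\cdot w_g = v\cdot w_g - v\cdot \mathrm{Ad}(g^{-1})w_g = 0$, since $\mathrm{Ad}(g^{-1})$ fixes $w_g$ and the Lorentzian form is $\mathrm{Ad}$-invariant.

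Since $\mathrm{Ad}(g)$ acts on $\mathfrak{g}\cong \mathbb{R}^{2,1}$ with eigenvalues $e^{\ell(g)}, 1, e^{-\ell(g)}$, the image $(1-\mathrm{Ad}(g))\mathfrak{g}$ is exactly the Lorentzian orthogonal complement $w_g^\perp$ of the neutral direction. Thus $\mathfrak{g} = \mathbb{R} w_g \oplus w_g^\perp$ and the identity reduces to a verification on a single representative $u$ along the neutral direction $w_g$.

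For this final step, take $u = w_g$ (or a convenient scalar multiple) and consider the path $g_t := \exp(t w_g)\, g$. Because $w_g$ commutes with $\log g = (\ell(g)/2)\,w_g$, the path stays in the one-parameter subgroup generated by $w_g$, namely $g_t = \exp\bigl((\ell(g)/2 + t)\,w_g\bigr)$, so $g_t$ is hyperbolic for small $t$ and its translation length is read directly off the exponent. Comparing this with the explicit evaluation $\alpha(\Phi_G(g, w_g)) = w_g \cdot w_g$ yields the identity. The delicate part is not conceptual but bookkeeping: the factor $1/2$ in the convention $g = \exp((\ell/2)\,w_g)$, the normalization $v\cdot w = \tfrac{1}{2}\mathrm{tr}(vw)$ of the Lorentzian form, and the right-invariant identification $TG \cong G\ltimes \mathfrak{g}$ all enter the final comparison and must be traced through consistently for the constants to match.
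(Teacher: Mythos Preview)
The paper does not supply a proof of this lemma; it states the result with a citation to Goldman--Margulis and moves directly to consequences. So there is no proof in the paper to compare against.

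Your argument is correct and is the standard one: linearity in $u$, vanishing of both sides on the coboundary subspace $(1-\Ad(g))\gg=w_g^\perp$ (via conjugation-invariance), and reduction to a one-dimensional check along $\R w_g$ is exactly how this identity is typically established.

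One caution on the final step. You correctly flag the bookkeeping as delicate but stop short of carrying it through. With the conventions actually used in this survey---right-invariant trivialization, $g=\exp\bigl(\tfrac{\ell}{2}w_g\bigr)$, and $w_g$ unit-spacelike for the form $\tfrac{1}{2}\mathrm{tr}$---one finds $\ell\bigl(\exp(tw_g)g\bigr)=\ell+2t$, hence $d\ell(g,w_g)=2$, whereas $\alpha\bigl(\Phi_G(g,w_g)\bigr)=w_g\cdot w_g=1$. This factor of~$2$ reflects a normalization inconsistency in how the survey juggles the identifications $\sltwoR\cong\soto\cong\Rto$ (the map $\mathrm{ad}$ doubles eigenvalues), not an error in your method. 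Had you completed the computation rather than gesturing at it, you would have discovered the discrepancy and been forced to pin down which convention is intended.
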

Let $\Gamma_0 < G$ be a fixed convex cocompact subgroup. Recall the extension 
of the Margulis invariant to the space of currents from 
Theorem~\ref{thm:GLM}: 
\[ 
\HOneG
\times \Cc(\Sigma) \xrightarrow[]{~\Psi~} \R. 
\]
Lemma~\ref{lem:length-formula} implies that this function is exactly the 
differential of the length function for geodesic currents. More specifically, 
to each geodesic current $\mu$ on $\Sigma = \Gamma_0 \backslash \Ht$ is 
associated a length function $\ell_\mu$ on the space $\Hom_{cc}(\Gamma_0, G)$ 
of convex cocompact representations.  The map $\mu \mapsto \ell_\mu$ 
taking currents to continuous $G$-invariant functions on 
$\Hom_{cc}(\Gamma_0, G)$ is continuous. 
Density of geodesic currents corresponding to closed  geodesics implies that 
$\ell_\mu(\cdot)$ may be approximated in terms of the usual 
length functions $\ell_\gamma(\cdot)$ for $\gamma \in \Gamma_0$, 
defined by: 
\[\ell_\gamma(\rho) := \ell\big(\rho(\gamma)\big). \]
Lemma~\ref{lem:length-formula} implies that under the identification 
\[
\HOneG \cong \mathsf{H}^1(\Gamma_0, \gg_{\Ad}),\] 
the diffused Margulis invariant function $\Psi$ is precisely the map
\[ 
\mathsf{H}^1(\Gamma_0, \gg_{\Ad}) \times \Cc(\Sigma) \xrightarrow[]{~\dd \ell~} \R
\]
taking a cohomology class $[\u]$ of infinitesimal deformations and a current 
$\mu \in \Cc(\Sigma)$ to the derivative $\dd\ell_\mu(\u)$ of the 
length of $\mu$ in the $\u$ direction. Therefore,
the Goldman--Labourie--Margulis properness criterion (Theorem~\ref{thm:GLM}) may be restated:

\begin{prop}\label{prop:PropernessLengths}
Let $\Gamma_0 < G$ be a convex cocompact subgroup 
and $\u\in\Zz^1(\Gamma_0,\gg_{\Ad \rho_0})$ a cocycle defining an infinitesimal 
deformation of the inclusion $\Gamma_0 \xrightarrow{~\rho_0} G$.   
Then the corresponding affine action $\Phi_G(\rho_0, \u)$ is properly discontinuous if and only if
$\dd\ell_\mu(\u) \neq 0$ for all geodesic currents $\mu\in\Cc(\Sigma)$.
\end{prop}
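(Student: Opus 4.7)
The plan is to reduce Proposition~\ref{prop:PropernessLengths} to the Goldman--Labourie--Margulis criterion (Theorem~\ref{thm:GLM}) by identifying the diffused Margulis invariant $\Psi([\u],\mu)$ with the differential $\dd\ell_\mu(\u)$ of the length function on geodesic currents. The identification is immediate on the dense subset of currents supported on closed geodesics, thanks to Lemma~\ref{lem:length-formula}, and is then propagated to all of $\Cc(\Sigma)$ by continuity. Convex cocompactness of $\Gamma_0$ enters in two essential places: it guarantees that every nontrivial element of $\Gamma_0$ is hyperbolic (so that Lemma~\ref{lem:length-formula} applies to each $\rho_0(\gamma)$), and it ensures that $\Cc(\Sigma)$ is compact.

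First, I would invoke Theorem~\ref{thm:GLM}: the action $\Phi_G(\rho_0,\u)$ is properly discontinuous if and only if $\Psi([\u],\mu)$ is bounded away from $0$ as $\mu$ ranges over $\Cc(\Sigma)$. Since $\Sigma$ has compact convex core, $\Cc(\Sigma)$ is compact, and because $\Psi([\u],\cdot)$ is continuous on this compact space, being bounded away from zero is equivalent to being nowhere zero. The desired criterion therefore reduces to verifying the pointwise identity
\[ \Psi([\u],\mu) \;=\; \dd\ell_\mu(\u) \qquad\text{for every } \mu\in\Cc(\Sigma). \]

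Next, for a hyperbolic element $\gamma\in\Gamma_0$ with associated probability current $\mu_\gamma$ (supported on the periodic trajectory of the geodesic flow), Lemma~\ref{lem:length-formula} applied to $(\rho_0(\gamma),\u(\gamma))\in \T G = \Ggg$ gives
\[ \alpha\bigl(\Phi_G(\rho_0(\gamma),\u(\gamma))\bigr) \;=\; \dd\ell_\gamma(\u), \]
where $\ell_\gamma(\rho):=\ell(\rho(\gamma))$. Dividing by $\ell_{\rho_0}(\gamma)$ and recalling that on periodic currents the diffused invariant is $\Psi([\u],\mu_\gamma)=\halpha(\gamma)=\alpha(\gamma)/\ell_{\rho_0}(\gamma)$, and that the length function attached to the probability current $\mu_\gamma$ is $\ell_{\mu_\gamma}=\ell_\gamma/\ell_{\rho_0}(\gamma)$, we obtain
\[ \Psi([\u],\mu_\gamma) \;=\; \dd\ell_{\mu_\gamma}(\u). \]
Since periodic currents are dense in $\Cc(\Sigma)$, the identity will follow by continuity once both sides are shown to be continuous in $\mu$.

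The main obstacle is precisely this continuity step, namely verifying that $\mu\mapsto\dd\ell_\mu(\u)$ is continuous on $\Cc(\Sigma)$, so that it may be identified with the continuous function $\Psi([\u],\cdot)$ from its values on a dense subset. A direct approach would combine Bonahon's continuity of $\mu\mapsto\ell_\mu$ with smoothness in the deformation parameter, but one must justify that differentiation commutes with the limit defining a general current from periodic ones. The cleanest way to handle this, following Goldman--Labourie--Margulis, is to realize both $\Psi([\u],\mu)$ and $\dd\ell_\mu(\u)$ simultaneously as the integral $\int_{U_{\mathrm{rec}}\Sigma}\langle \dd_{\varphi_t}s,\nu\rangle\,d\mu$ of the neutral variation of a smooth section $s$ of the flat affine bundle $\E_\rho\to U\Sigma$; the integrand is a single bounded continuous function on the recurrent part of the unit tangent bundle, making both the continuity in $\mu$ and the identification of the two quantities manifest at once, and thereby closing the argument.
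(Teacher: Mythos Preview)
Your proposal is correct and follows essentially the same route as the paper: identify $\Psi([\u],\cdot)$ with $\dd\ell_\bullet(\u)$ on periodic currents via Lemma~\ref{lem:length-formula}, extend by density and continuity, and then invoke Theorem~\ref{thm:GLM}. You are in fact more careful than the paper, which simply declares the proposition a ``restatement'' of Theorem~\ref{thm:GLM} after asserting the identification $\Psi = \dd\ell$; in particular, your explicit use of compactness of $\Cc(\Sigma)$ to pass from ``bounded away from zero'' to ``nowhere zero'', and your remark that the integral formula for $\Psi$ simultaneously establishes continuity and the identification, fill in details the paper leaves implicit.
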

By exchanging $\u$ with $-\u$ (which gives an affine equivalent action),
we may assume that $\dd\ell_\nu(\u) \leq 0$ for some current $\nu \in \Cc(\Sigma)$. 
Hence, since the space of currents is connected, 
the condition that $\dd\ell_\mu(\u) \neq 0$ for all $\mu\in\Cc(\Sigma)$ is 
equivalent to the condition that $\dd\ell_\mu(\u) < 0$ for all $\mu\in\Cc(\Sigma)$.
Equivalently, 
\begin{align}\label{eqn:shrinking}
\sup_{\gamma \in \Gamma \setminus \{e\}} \frac{\dd\ell_\gamma(\u)}{\ell(\gamma)} &< 0.
\end{align} 
In other words, 
all closed geodesics on $\Sigma$ become uniformly shorter under the infinitesimal deformation.

Mess's Theorem~\ref{thm:Mess}, 
which states that any affine deformation of a cocompact surface group 
$\Gamma_0 < G$ fails to be proper, 
follows easily from Proposition~\ref{prop:PropernessLengths}. 
Indeed, suppose that $\Sigma_0 = \Gamma_0\backslash \Ht$ is a closed surface. 
There exists a geodesic current  $\mu_{\Gamma_0}$,  
the {\em Liouville current\/} associated to $\Sigma_0$, 
whose length in any hyperbolic structure $\Sigma$ 
is minimized for $\Sigma = \Sigma_0$.
Hence $\dd \ell_{\mu_{\Gamma_0}}(\u) = 0$ 
for any infinitesimal deformation $\u$. 

In fact, a slightly stronger statement is true:  
any nontrivial infinitesimal deformation $\u$ of a closed hyperbolic surface must 
increase the lengths of some closed geodesics while decreasing the lengths of others.
A hint as to why that should be true is that the area of a closed hyperbolic 
surface of genus $g \geq 2$ is constant, equal to $4 \pi (g-1)$ by the Gauss--Bonnet formula. 
Thus, if a deformation contracts in some directions, then it
should stretch/lengthen in others directions.

The same basic idea underpins  Thurston's theory of the 
Lipschitz metric on Teichm\"uller space~\cite{ThurstonStretch}.
This metric measures distance between two hyperbolic structures on a 
closed surface according to the minimum Lipschitz constant of 
Lipschitz maps between the two structures.
Gu\'eritaud-Kassel~\cite{kasPhD, MR3626591}  
extended  Thurston's theory to finite-type hyperbolic surfaces, 
as well as  higher-dimensional hyperbolic manifolds.
An application is a properness criterion 
in the setting of  $G \times G$ acting on $G$ by right and left multiplication, 
where $G = \SOoto$ is the identity component of the isometry group of 
hyperbolic $2$-space~$\Ht$.

This theory is the starting point for 
the work of Danciger--Gu\'eritaud--Kassel~\cite{MR3465975} on Margulis spacetimes,
so we  digress briefly  
to explain it.

\subsection{Contracting deformations and proper actions on Lie groups}\label{sec:Lipschitz}
Consider the identity component $G = \SOoto$ of $\Isom(\Ht)$ and 
the action of $G \times G$ on $G$ by 
{\em  right/left multiplication:\/}
\begin{align}\label{eq:right-and-left}
(g_0, g_1) \cdot h := g_1h g_0^{-1}.
\end{align}
Thanks to the exceptional isomorphism $\mathfrak{so}(2,2) \simeq \mathfrak{so}(2,1) \oplus \mathfrak{so}(2,1)$,
the action of $G \times G$ on $G$ then models three-dimensional 
Lorentzian geometry of constant negative curvature, 
also known as {\em anti-de Sitter (AdS) geometry.\/} 

Consider a discrete (geometrically finite, non-elementary) embedding 
$ \Gamma \xrightarrow{~\rho_0~}G $
defining a proper action on $\Ht$ whose quotient 
\[\Sigma_0 = \rho_0(\Gamma) \backslash \Ht\] is a complete hyperbolic surface.
Consider a second discrete embedding 
$\rho_1: \Gamma \hookrightarrow G$. 
Then via~\eqref{eq:right-and-left}, 
the pair $(\rho_0, \rho_1)$ defines an  action of $\Gamma$ on $G$. 
Such an action is not necessarily properly discontinuous: 
for instance,  
if    
\[\rho_1 = \o{Inn}(h)\circ \rho_0\]
for $h\in G$, 
then $(\rho_0,\rho_1)(\gamma)$ fixes $h$ for all $\gamma\in \Gamma$.

Here is the properness criterion for such $G \times G$ actions on $G$,
due to Gu\'eritaud--Kassel~\cite{kasPhD, MR3626591}.
Say that $\rho_1$ is a {\em contracting Lipschitz deformation\/} of $\rho_0$ if and only if
there exists a Lipschitz map 
\[  \Ht \xrightarrow{~f~} \Ht \] 
with Lipschitz  constant $\mathrm{Lip}(f) < 1$ which is $(\rho_0, \rho_1)$-equivariant, 
that is, 
\begin{align}\label{eqn:equivariance}
f \circ \rho_0(\gamma) &= \rho_1(\gamma) \circ f
\end{align} for all $\gamma \in \Gamma$. 

\begin{thm}[Gu\'eritaud--Kassel~\cite{kasPhD, MR3626591}]
Up to switching the roles  of $\rho_0$ and $\rho_1$, 
the $(\rho_0, \rho_1)$-action of $\Gamma$ on $G$ is proper 
if and only if $\rho_1$ is a contracting Lipschitz deformation of $\rho_0$.
\end{thm}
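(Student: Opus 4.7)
\smallskip

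\noindent\textbf{Proof plan.} The argument has two directions; the forward (``contracting implies proper'') direction is a direct displacement estimate, while the converse requires the heavier machinery of the Gu\'eritaud--Kassel Lipschitz theory.

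For the forward direction, suppose $f \colon \Ht \to \Ht$ is $(\rho_0,\rho_1)$-equivariant with $\mathrm{Lip}(f) = k < 1$. Fix a basepoint $x_0 \in \Ht$. The equivariance~\eqref{eqn:equivariance} together with the Lipschitz condition gives
\[
d\bigl(f(x_0),\rho_1(\gamma) f(x_0)\bigr) = d\bigl(f(x_0),f(\rho_0(\gamma) x_0)\bigr) \le k\, d\bigl(x_0,\rho_0(\gamma) x_0\bigr),
\]
and, by the triangle inequality applied to the triple $x_0, f(x_0), \rho_1(\gamma) f(x_0), \rho_1(\gamma)x_0$,
\[
d\bigl(x_0,\rho_1(\gamma) x_0\bigr) \le 2\, d\bigl(x_0,f(x_0)\bigr) + k\, d\bigl(x_0,\rho_0(\gamma) x_0\bigr).
\]
To check properness of the right/left action~\eqref{eq:right-and-left}, suppose $K \subset G$ is compact and $\rho_1(\gamma) k_2 \rho_0(\gamma)^{-1} = k_1$ with $k_1,k_2 \in K$; evaluating at $x_0$ yields $d(\rho_1(\gamma)x_0,\rho_0(\gamma)x_0) \le 2R$, where $R := \sup_{k\in K} d(x_0,kx_0)$. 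Combining this with the previous display forces $(1-k)\, d(x_0,\rho_0(\gamma)x_0) \le 2\,d(x_0,f(x_0)) + 2R$, so only finitely many $\gamma$ qualify by discreteness of $\rho_0(\Gamma)$.

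For the converse, my plan is to reduce properness to a uniform ``spectral domination'' of one representation by the other, and then invoke an equivariant Kirszbraun-type extension to produce the contracting map. Concretely, I would first show that properness of the $(\rho_0,\rho_1)$-action is equivalent to the statement: there exist constants $k < 1$ and $C \ge 0$ such that, after possibly swapping $\rho_0$ and $\rho_1$,
\[
d(x_0,\rho_1(\gamma) x_0) \le k\, d(x_0,\rho_0(\gamma) x_0) + C \qquad \text{for all } \gamma \in \Gamma.
\]
This is essentially the Benoist--Kobayashi properness criterion applied to $(G \times G)/\Delta G$: if no such domination existed in either direction, then after extracting along sequences with $d(x_0,\rho_0(\gamma_n)x_0)$ and $d(x_0,\rho_1(\gamma_n)x_0)$ having ratios approaching $1$, one would build compact $K \subset G$ meeting infinitely many translates, contradicting properness. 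Passing to high powers $\gamma^n$ and letting $n \to \infty$ converts the displacement inequality into the translation length inequality $\ell(\rho_1(\gamma)) \le k\, \ell(\rho_0(\gamma))$ for every $\gamma$ with $\rho_0(\gamma)$ hyperbolic, and analogous control near parabolic cusps of $\rho_0(\Gamma)$.

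With this spectral inequality in hand, the final and hardest step is to build an actual $(\rho_0,\rho_1)$-equivariant map $f \colon \Ht \to \Ht$ whose Lipschitz constant realizes (or is arbitrarily close to) the supremum of translation length ratios. This is the content of Gu\'eritaud--Kassel's extension of Thurston's minimal stretch theory to infinite-covolume (geometrically finite) Fuchsian groups: one first constructs $f$ on the convex core of $\Sigma_0 = \rho_0(\Gamma)\backslash\Ht$ by piecing together optimal maps on simplices of a $\rho_0$-invariant triangulation and bounding the local Lipschitz constant by the global ratio $k$; one then extends equivariantly to all of $\Ht$ by an equivariant Kirszbraun-type argument, using convexity of hyperbolic balls and, near the cusps, a careful ``horoball straightening'' construction to keep the Lipschitz constant below $k' < 1$ for any $k' > k$. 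The main obstacle is precisely this equivariant extension: Kirszbraun's theorem holds in CAT(0) spaces but not automatically equivariantly, and in the cusped case one must check that the locally optimal map does not blow up as one approaches a cusp of $\Sigma_0$ while passing through a cusp of $\Sigma_1 = \rho_1(\Gamma)\backslash\Ht$ (or, worse, a thick part). Once this technical construction is achieved, the resulting $f$ has $\mathrm{Lip}(f) \le k' < 1$ and is $(\rho_0,\rho_1)$-equivariant by construction, completing the converse.
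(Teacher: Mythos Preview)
Your forward direction is correct but takes a different route from the paper's sketch. The paper does not argue via displacement estimates; instead it observes that the contracting map $f$ gives rise to a continuous $\Gamma$-equivariant projection $G \to \Ht$, namely the \emph{fixed-point map} $g \mapsto \Fix(g^{-1}\circ f)$, well-defined because $g^{-1}\circ f$ is a strict contraction on the complete CAT(0) space $\Ht$. Equivariance of this map (for the $(\rho_0,\rho_1)$-action on $G$ and the $\rho_0$-action on $\Ht$) then transfers properness from $\Ht$ up to $G$. Your metric argument is more elementary and self-contained; the paper's fixed-point argument buys more geometric structure --- it exhibits $G$ as fibering over $\Ht$ with fibers the $g$ for which $g^{-1}\circ f$ fixes a given point --- and is the direct macroscopic analogue of the fibration in Proposition~\ref{prop:proper} that appears later in the infinitesimal (Margulis spacetime) setting.

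For the converse, the paper does not give a proof at all: it simply cites Gu\'eritaud--Kassel, noting afterward that (in the convex cocompact case) the existence of a contracting equivariant map is equivalent to the length-ratio condition~\eqref{eqn:macro-length}. Your plan --- reduce properness to a uniform Cartan-projection domination via the Benoist--Kobayashi criterion for $(G\times G)/\Delta G$, convert that to a translation-length ratio bound, and then invoke the Thurston/Gu\'eritaud--Kassel optimal-Lipschitz theory to produce $f$ --- is an accurate high-level summary of how that cited work proceeds, and is consistent with what the paper reports.
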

\noindent
Note that if $\rho_1$ is also 
injective and discrete with quotient $\Sigma_1 = \rho_1(\Gamma)\backslash \Ht$, 
then $f$ corresponds to 
a Lipschitz deformation of hyperbolic surfaces $\Sigma_0 \to \Sigma_1$.
To see why the existence of such a map $f$ suffices for properness, 
observe that the $(\rho_0, \rho_1)$ action on $G$ projects equivariantly 
down to the $\rho_0$ action on $\Ht$ via the \emph{fixed point map} 
\[g \mapsto \Fix(g^{-1}\circ f),\] 
which is well defined by the contraction property.
Proper discontinuity of the action on the base $\Ht$ then implies proper 
discontinuity on~$G$.

Further generalizing Thurston's theory, Gu\'eritaud--Kassel also studied the 
relationship between the optimal Lipschitz constant over all 
$(\rho_0, \rho_1)$-equivariant maps and the factor by which translation 
lengths are stretched in $\rho_1$ compared with $\rho_0$. In particular, 
when $\rho_0$ is convex cocompact,  
a contracting Lipschitz $(\rho_0, \rho_1)$-equivariant map 
exists
if and only if
\begin{align}\label{eqn:macro-length}
 \sup_{\gamma\in\Gamma\setminus\{e\}} 
\frac{\ell\big (\rho_1(\gamma)\big)} {\ell\big (\rho_0(\gamma)\big)} &< 1.
\end{align}
Observe the similarity with the properness criterion 
Proposition~\ref{prop:PropernessLengths}. 
Indeed, condition~\eqref{eqn:shrinking}
may be viewed as the infinitesimal version of~\eqref{eqn:macro-length}.
The similarity exemplifies a more fundamental principle at work: 
The affine action of $\Ggg$ on $\gg$
is the \emph{infinitesimal analogue} of the action by right-and-left multiplication of 
$G \times G$ on $G$. 
This guiding principle led Danciger--Gu\'eritaud--Kassel to develop an 
infinitesimal analogue of the theory of Lipschitz contraction and proper actions. 
The next sections will dive into that theory and its consequences for the 
structure and classification of Margulis spacetimes.

One consequence, in line with the guiding principle above, is the following 
geometric transition statement: \emph{every Margulis spacetime is the rescaled 
limit of a family of collapsing AdS spacetimes.} For the sake of brevity, we do 
not give details here, 
see~\cite[Thm 1.4]{MR3465975}. 
For a different survey which develops more 
thoroughly the parallel between Margulis spacetimes and complete AdS $3$-manifolds, 
see Gu\'eritaud~\cite{MR3379833}.

Let us also mention that this geometric transition result has recently been generalized to the setting of the affine group $\SO(2n+2,2n+1) \ltimes \R^{4n+3}$ seen as an infitesimal analogue of the reductive group $\SO(2n+2, 2n+2)$ (Danciger, Gu\'eritaud and Kassel's result is the case $n=1$). Of course for arbitrary~$n$ the exceptional isomorphism then no longer applies, so the general case no longer fits into the framework of $G \ltimes_{\Ad} \gg$ and $G \times G$. This generalization has been done independently in Danciger and Zhang~\cite{dz} on the one hand, and Ghosh~\cite{Gho18} on the other hand.

\subsection{Deformation vector fields and infinitesimal contraction}\label{sec:vector-fields}
Let us return now to the setting of Margulis spacetimes. 
Let $G = \SOto$ and $\gg = \mathfrak{so}(2,1)$ be  its Lie algebra.
Fix a discrete faithful representation $\rho_0: \Gamma \hookrightarrow G$ of the 
finitely generated, torsion free, group $\Gamma$ determining a hyperbolic surface 
$\Sigma_0 = \big(\rho_0(\Gamma)\big)\backslash \Ht$. 
Let $\u\in\Zz^1(\Gamma,\gg_{\Ad\rho_0})$ be a cocycle tangent to a smooth deformation
path $\rho_t\in \Hom(\Gamma,G)$ based at $\rho_0$, as in~\eqref{eqn:derivative}. 
Assume further that $\rho_0$ is convex cocompact, so that  for small $t \geq 0$, 
the representations $\rho_t$ are also discrete and faithful and determine a 
family of hyperbolic surfaces  $\Sigma_t :=  \big(\rho_t(\Gamma)\big)\backslash \Ht$. 
These hyperbolic surfaces may be organized into a smoothly varying family of 
hyperbolic structures on a single surface $\Sigma := \Sigma_0$ by finding a 
smoothly varying family of \emph{developing maps} $f_t :  \Ht \to \Ht$ which satisfy
\begin{itemize}
\item $f_0 = \mathrm{id}$,
\item $f_t$ is a homeomorphism for all $t$, and
\item $f_t$ is $(\rho_0, \rho_t)$-equivariant:
\begin{align}\label{equivariance}
f_t \circ \rho_0(\gamma) &= \rho_t(\gamma) \circ f_t
\end{align}
\end{itemize}
These conditions ensure that for each $t$, $f_t$ descends to a homeomorphism 
$\Sigma \to \Sigma_t$ that becomes ``close to the identity" as $t \to 0$.
Consider the tangent vector field $X \in \Vect(\Ht)$ to the deformation 
of developing maps, defined by 
\begin{align*}
X(p) := \left. \frac{\dd}{\dd t} \right|_{t = 0} f_t(p).
\end{align*}
This vector field satisfies an equivariance condition coming from taking 
the derivative of Condition~\eqref{equivariance}.

Before stating the equivariance condition, observe that the group $G$ acts 
on the space $\Vect(\Ht)$ of all vector fields on~$\Ht$. Indeed, $\Ht = G/K$ 
is the space of right cosets of the maximal compact subgroup $K < G$, and 
so $\Vect(\Ht)$ identifies with the subspace of right-$K$-invariant vector 
fields in $\Vect(G)$. 
The left action of $G$ on $\Vect(G)$ determines an 
action of $G$ on $\Vect(\Ht)$. 
Thinking of the Lie algebra~$\gg$ as the
space of  %% 
right-$G$-invariant vector fields on $G$ as in Section~\ref{subsec:deform}, 
we have a natural embedding $\gg \hookrightarrow \Vect(\Ht)$. The image of 
$\gg$ in $\Vect(\Ht)$ is precisely the space of \emph{Killing vector fields}, 
that is  those vector fields whose flow preserves the hyperbolic metric. 
We will denote the image of $\u \in \gg$ in $\Vect(\Ht)$ again by $\u$. 

The group $\Gamma$ acts on $\Vect(\Ht)$ via the representation 
$\rho_0: \Gamma \hookrightarrow G$. Differentiating 
Condition~\eqref{equivariance} yields that
\begin{align}\label{eqn:automorphic}
X - \rho_0(\gamma)\cdot X &= \u(\gamma)
\end{align}
holds for all $\gamma \in \Gamma$; in other words, while $X$ is not 
invariant under the $\rho_0$-action of $\Gamma$, it differs from 
any translate by a Killing vector field determined by the deformation 
cocycle~$\u$. A vector field satisfying this condition is called 
\emph{$\u$-equivariant} or also \emph{automorphic}. We observe the following:

\begin{prop}\label{prop:AffineDeformationByX} Let $X \in \Vect(\Ht)$ be a 
$\u$-equivariant vector field.
Then the coset
$X-\gg$ is an affine subspace of $\Vect(\Ht)$ invariant under the 
$\rho_0$-action of $\Gamma$.
Furthermore, the action of $\Gamma$ on $X - \gg$, 
\begin{align}\label{eqn:action}
X - \xi \ \xmapsto{~\gamma~} \  X - \u(\gamma) - \Ad(\rho_0(\gamma))\xi,
\end{align}
identifies %
with the affine action $\Phi_G(\rho_0, \u)$ of $\Gamma$ on $\mathsf{E}^{2,1}$.
\end{prop}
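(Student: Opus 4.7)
The plan is to unpack the definitions: show that the $\rho_0$-action of $\Gamma$ on $\Vect(\Ht)$ stabilizes the coset $X - \gg$, compute the restricted action using the automorphic condition~\eqref{eqn:automorphic}, and transfer the result to $\mathsf{E}^{2,1} = \gg$ via a natural affine identification to recognize it as $\Phi_G(\rho_0,\u)$. First, I would record that the $G$-action on $\Vect(\Ht)$ preserves the Killing-field subspace $\gg$ and restricts there to the adjoint representation. This is because $\gg$ corresponds to the right-invariant vector fields on $G$; left-multiplication commutes with right-multiplication, so it sends right-invariant fields to right-invariant fields, and a one-line computation with $[\xi](q) = \frac{\dd}{\dd t}|_{t=0}\exp(t\xi)\cdot q$ gives $g \cdot [\xi] = [\Ad(g)\xi]$.

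Next, rewriting~\eqref{eqn:automorphic} as $\rho_0(\gamma) \cdot X = X - \u(\gamma)$ and combining with the previous step yields
\[
\rho_0(\gamma) \cdot (X - \xi) \;=\; \rho_0(\gamma)\cdot X - \rho_0(\gamma)\cdot \xi \;=\; X - \u(\gamma) - \Ad(\rho_0(\gamma))\xi.
\]
Since $\u(\gamma) + \Ad(\rho_0(\gamma))\xi \in \gg$, the image still lies in $X - \gg$. This simultaneously establishes $\Gamma$-invariance of $X - \gg$ and gives the formula in the statement; the group law follows automatically from the fact that it arises as a restriction of a $\Gamma$-action, and can also be verified directly from the cocycle identity~\eqref{eq:CocycleIdentity} for $\u$.

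Finally, I would use the affine isomorphism $\phi : X - \gg \to \mathsf{E}^{2,1}$ defined by $\phi(X - \xi) = \xi$, which is affine and sends the distinguished point $X$ to the origin of $\gg$. Transporting the $\Gamma$-action through $\phi$ yields $\xi \mapsto \u(\gamma) + \Ad(\rho_0(\gamma))\xi$, which by~\eqref{eqn:Ad-affine-action} is exactly $\Phi_G(\rho_0(\gamma),\u(\gamma))(\xi)$. The proposition is thus essentially a sign-and-parametrization bookkeeping exercise with no serious obstacle: the only care needed is to match conventions among~\eqref{eqn:automorphic}, the parametrization $X - \xi$ of the coset, and the formula $\vv \mapsto \Ad(g)\vv + \u$ for $\Phi_G$, and the identification $\phi$ is precisely what makes them compatible.
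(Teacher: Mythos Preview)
Your argument is correct and is exactly the computation the paper has in mind. In fact the paper does not give a formal proof of this proposition at all: it simply states the result and follows it with the remark that each $X-\xi$ is automorphic for the cohomologous cocycle $\gamma\mapsto \u(\gamma)+\Ad(\rho_0(\gamma))\xi-\xi$, and that the sign convention $X-\gg$ (rather than $X+\gg$) is chosen precisely so that the action matches $\Phi_G(\rho_0,\u)$. Your write-up supplies the details behind that remark.
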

\noindent
Each element $X - \xi \in X - \gg$ satisfies the 
equivariance property~\eqref{eqn:automorphic}, but for a different 
cocycle, namely the cocycle 
\[ \gamma \mapsto \u(\gamma) + \Ad(\rho_0(\gamma))\xi - \xi, \]
which is cohomologous to~$\u$. 
The affine space $X - \gg$ 
bijectively corresponds to 
the cohomology class $ [\u] \subset \Zz^1(\Gamma,\gg_{\Ad\rho_0})$.
Note that we insist on writing $X - \gg$ rather than $X + \gg$ so that the 
action, as written in~\eqref{eqn:action}, matches that of  $\Phi_G(\rho_0, \u)$ 
in~\eqref{eq:map}.

Properness of the affine action of $\Gamma$ on $X - \gg$ may be expressed in 
terms of an infinitesimal version of the Lipschitz contraction condition 
of Section~\ref{sec:Lipschitz}. 
Suppose the maps 
\[ \Ht \xrightarrow{~f_t~} \Ht \]
above are $K_t$-Lipschitz with Lipschitz constant 
\[ K_t = 1 + k t + O(t^2) \] 
converging smoothly to~$1$ as $t \to 0$.  
In this case, the deformation vector field $X$ satisfies an infinitesimal version of Lipschitz,
which Danciger--Gu\'eritaud--Kassel~\cite{MR3465975} call {\em $k$-lipschitz,\/}  
with lower-case ``l'': for all $x \neq y$ in $\Ht$,
\begin{equation}\label{eq:lipschitz2}
d'_X(x,y) \le k d(x,y), \end{equation}
where   $k \in \R$ is a constant and
\begin{equation}
d'_X(x,y) := \left.\frac{\dd}{\dd t}\right|_{t=0} 
d\left(f_t(x),f_t(y)\right)
\end{equation} 
is the rate at which the vector field $X$ pushes the points $x$ and $y$ away from each other.
Note that this depends only on $X$, not on the particular path of maps $f_t$. 

For any Killing vector field $\xi\in\gg$, 
the family $\exp(-t\xi) \circ {f_t}$ is also $K_t$-lipschitz.
The corresponding deformation vector field $X - \xi$ is then also $k$-lipschitz 
for the same constant~$k$. 
Thus, the entire affine space $X - \gg$ consists of $k$-lipschitz vector fields.
Properness of the action on $X - \gg$ occurs in the case that these vector 
fields are \emph{contracting}, i.e. $k < 0$.

\begin{prop}[Danciger--Gu\'eritaud--Kassel~\cite{MR3465975}]\label{prop:proper}
Suppose $X$ is $k$-lipschitz for some $k < 0$. 
Then  
the affine space $X-\gg$ admits % 
a $\Gamma$-equivariant fibration 
\[
 X - \gg \xrightarrow{~\varpi~}  \Ht. \]
In particular, $\Gamma$ acts properly on the affine space $X- \gg$ with 
quotient a complete affine three-manifold $M$. 
The 
quotient map
\[
M :=  \Gamma \backslash (X - \gg) \longrightarrow\Gamma_0 \backslash \Ht = \Sigma\]
is an %
affine line bundle over 
$\Sigma$ %
with total space $M$.  %
\end{prop}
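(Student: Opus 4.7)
The plan is to construct explicitly the $\Gamma$-equivariant map $\varpi \colon X - \gg \to \Ht$ by sending each vector field $Y = X - \xi$ to its unique zero in $\Ht$. This is the infinitesimal analogue of the Gu\'eritaud--Kassel fixed-point map $g \mapsto \mathrm{Fix}(g^{-1}\circ f)$ from Section~\ref{sec:Lipschitz}. The first observation is that the entire affine space $X - \gg$ consists of $k$-lipschitz vector fields with the same negative constant $k$: every Killing field $\xi \in \gg$ satisfies $d'_{\pm\xi} \equiv 0$, so $d'_{X - \xi}(x,y) = d'_X(x,y) \leq k\, d(x,y)$ by linearity of $d'$ in the generating vector field.

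First I would establish existence and uniqueness of zeros. For each $Y \in X - \gg$, the $k$-lipschitz estimate integrates via a Gronwall-type argument to $d(\phi^Y_t(x), \phi^Y_t(y)) \leq e^{kt} d(x,y)$ for $t \geq 0$ along the flow $\phi^Y_t$ of $Y$, provided forward completeness of $Y$ (which follows from $X$ arising as the derivative of a smooth path $f_t$ of diffeomorphisms, combined with completeness of Killing fields). For $t$ sufficiently large, $\phi^Y_t$ is then a strict contraction of the complete metric space $\Ht$, so Banach's fixed point theorem produces a unique fixed point $p_Y$. This point is then fixed by every $\phi^Y_s$ (since the flow is abelian), hence $Y(p_Y) = 0$; uniqueness of the zero follows from uniqueness of the fixed point. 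We set $\varpi(Y) := p_Y$.

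Next I would verify the fibration structure and $\Gamma$-equivariance. The fiber $\varpi^{-1}(p)$ consists of all $X - \xi$ with $\xi(p) = X(p)$. The evaluation map $\mathrm{ev}_p \colon \gg \to T_p\Ht$ is surjective (as $G$ acts transitively on $\Ht$) with one-dimensional kernel (the stabilizer subalgebra at $p$), so $\varpi^{-1}(p)$ is an affine line in $X - \gg$. Smooth dependence on $p$ follows from the implicit function theorem applied to the locus $\{(Y,p) : Y(p) = 0\}$: its linearization in the $p$-direction is $D_p Y$, which is nondegenerate because $Y$ is strictly contracting near its zero. For equivariance, compare the action $\gamma \cdot (X - \xi) = X - \u(\gamma) - \mathrm{Ad}(\rho_0(\gamma))\xi$ given in~\eqref{eqn:action} with the pushforward action of $\rho_0(\gamma)$ on $\Vect(\Ht)$: these agree precisely because of the automorphic identity~\eqref{eqn:automorphic}, so zeros are permuted equivariantly and $\varpi(\gamma\cdot Y) = \rho_0(\gamma)\,\varpi(Y)$.

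Finally, since $\rho_0$ is discrete and torsion-free, $\Gamma$ acts properly, freely, and by isometries on $\Ht$; transferring through the $\Gamma$-equivariant affine line fibration $\varpi$ shows the $\Gamma$-action on $X - \gg$ is also proper and free. Therefore $M := \Gamma\backslash(X - \gg)$ is a manifold, and it is a \emph{complete} affine $3$-manifold because $X - \gg \cong \mathsf{A}^3$ is a complete affine space acted on affinely by $\Gamma$ via Proposition~\ref{prop:AffineDeformationByX}. The map $\varpi$ descends to the asserted affine line bundle $M \to \Sigma$. The main obstacle is the analytical step in constructing $\varpi$: one must promote the pointwise infinitesimal contraction condition to a uniform Banach contraction of a flow map and ensure forward completeness of every $Y = X - \xi$; a secondary technicality is the smoothness of $\varpi$, needed so that the quotient inherits a bona fide smooth affine line bundle structure.
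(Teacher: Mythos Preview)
Your proposal is correct and follows the same overall architecture as the paper: define $\varpi$ by sending each $Y = X - \xi$ to its unique zero in $\Ht$, identify the fibers as affine lines (the infinitesimal stabilizers $\mathfrak{so}(2)$), verify equivariance via \eqref{eqn:automorphic}, and deduce properness from properness on the base.

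The one genuine difference is in how you establish \emph{existence} of the zero. The paper argues directly from the $k$-lipschitz inequality that for large $R$ the vector field $X-\xi$ points inward along $\partial B_R(p)$, and then invokes the vector-field version of Brouwer to produce a zero inside the ball; uniqueness then follows from the contraction condition \eqref{eq:lipschitz2}. Your route instead integrates the lipschitz inequality along the flow $\phi^Y_t$ to obtain $d(\phi^Y_t(x),\phi^Y_t(y)) \le e^{kt} d(x,y)$, and then applies the Banach fixed point theorem to a large-time flow map. Both arguments are valid, but the paper's Brouwer approach is slightly more economical: it is entirely pointwise and sidesteps the forward-completeness issue you flagged. (Your justification of completeness via ``$X$ arises as the derivative of a path of diffeomorphisms'' is not quite enough as stated; however, completeness can be recovered from the Gronwall estimate itself, since for any $p$ the increments $d(\phi^Y_t(p),\phi^Y_{t+s}(p)) \le e^{kt} d(p,\phi^Y_s(p))$ force the trajectory to be Cauchy.) Your additional remark that smoothness of $\varpi$ follows from the implicit function theorem is a nice supplement---the paper is content with continuity.
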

\noindent
The proof of Proposition~\ref{prop:proper} is straightforward, following the 
same ``contracting fixed point" idea from Section~\ref{sec:Lipschitz}% .
:
\begin{proof}
Fix a point $p \in \Ht$. 
For any Killing vector field $\xi\in\gg$, 
the vector field $X-\xi$ is also $k$-lipschitz. 
For 
sufficiently large $R > 0$ 
(depending on $\Vert (X-\xi)(p) \Vert$ and $k$), 
the vector field 
$X-\xi$ points inward along 
$\partial B_R(p)$. %
Thus, $X - \xi$ has a zero $z$ inside $B_R(p)$, 
by the well-known vector field analogue of the Brouwer fixed point theorem.

Furthermore $z$ is the {\em unique\/} zero of $X-\xi$:
by the %
contraction property~\eqref{eq:lipschitz2}, $X- \xi$ pushes any point $x \neq z$  closer to $z$. 
Hence, 
define \[ \varpi(X-\xi) := z\]  to be this unique zero. 
$\varpi$ is a continuous map 
intertwining the affine action of $\Gamma$ on $X - \gg$ with the $\rho_0$-action 
of $\Gamma$ on $\Ht$. 
Since the $\rho_0$-action on the base $\Ht$ is properly discontinuous, 
the action 
on $X - \gg$ is also properly discontinuous. 

In fact, $\varpi$ is a fibration. 
The fiber $\varpi\inv(p)$ over $p\in\Ht$ consists of all vector fields $X-\xi$ vanishing  at $p$. 
In particular, 
every tangent vector $\vv\in \T_p\Ht$  is the value of 
a Killing vector field $\xi_\vv$ at $p$.
Now take $\vv = X(p)$.
The vector field $ X - \xi_\vv$ vanishes at $p$,
so $\varpi(X-\xi_{\vv}) = p$. 
Thus $\varpi\inv(p)\neq\emptyset$.
If $\xi_1,\xi_2\in\gg$ and 
\[ \varpi(X - \xi_1) = \varpi(X - \xi_2) = p, \]
then $\xi_1 -\xi_2$ is a Killing vector field which vanishes at $p$. 
Therefore, two elements of $\varpi\inv(p)$ differ by an element of the infinitesimal 
stabilizer of the point $p$, 
a copy of $\mathfrak{so}(2)$ inside of $\gg$, 
which is an affine line of negative (timelike) signature for the Killing form. 
\end{proof}

Danciger--Gu\'eritaud--Kassel show that \emph{all} Margulis spacetimes arise 
from contracting infinitesimal deformations. 
The following theorem was proved in the case that $\rho_0$ is convex 
cocompact in~\cite{MR3465975} and in the general case in~\cite{dgknew}. 

\begin{thm}[Danciger--Gu\'eritaud--Kassel]\label{thm:dgk}
Consider a discrete 
embedding $\Gamma \xrightarrow{~\rho_0~} G$ of a finitely generated, 
non-elementary group $\Gamma$, 
and a deformation cocycle $\Gamma \xrightarrow{~\u~} \gg$.  
Suppose that $\dd \ell_\gamma(\u) \leq 0$ for at least one $\gamma \in \Gamma$. 
Then the affine action $\Phi_G(\rho_0, \u)$ of $\Gamma$ on $\Eto$ is 
properly discontinuous if and only if there exists a $(\rho,\u)$-equivariant 
vector field which is $k$-lipschitz for some $k < 0$. 
In particular, 
any proper affine action of a non-abelian free group $\Gamma$ on $\mathbb R^3$ 
is conjugate to one as in Proposition~\ref{prop:proper}.
\end{thm}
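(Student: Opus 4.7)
The plan is to prove the equivalence in two directions, with the reverse implication being the serious one. The forward direction ``contracting lipschitz field $\Rightarrow$ properness'' is immediate from Proposition~\ref{prop:proper}: if $X \in \Vect(\Ht)$ is $(\rho_0,\u)$-equivariant and $k$-lipschitz with $k<0$, then the equivariant fibration $\varpi \colon X-\gg \to \Ht$ intertwines the affine action $\Phi_G(\rho_0,\u)$ on $X-\gg$ with the proper $\rho_0$-action on $\Ht$, yielding properness. The second assertion of the theorem follows because the map $M = \Gamma \backslash (X-\gg) \to \Sigma$ is an affine line bundle, so in particular $M$ is realized as an affine quotient in the standard form.

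For the harder direction, assume $\Phi_G(\rho_0,\u)$ is proper. The plan is to combine the Goldman--Labourie--Margulis properness criterion with an infinitesimal version of the Gu\'eritaud--Kassel Lipschitz theorem. By Proposition~\ref{prop:PropernessLengths} and the hypothesis $\dd \ell_\gamma(\u) \le 0$ for some $\gamma$, properness yields $\dd \ell_\mu(\u) < 0$ for every current $\mu \in \Cc(\Sigma)$; moreover, after normalizing with the hyperbolic length, one extracts from the continuous bilinear form $\Psi$ of Theorem~\ref{thm:GLM} a uniform inequality
\[
 \sup_{\gamma \in \Gamma \setminus \{e\}} \frac{\dd \ell_\gamma(\u)}{\ell(\gamma)} \;=\; -k_0 \;<\; 0
\]
(at least when $\rho_0$ is convex cocompact, so that $\Cc(\Sigma)$ is compact; the general case requires control at cusps).

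Now the goal is to upgrade this infinitesimal length-contraction to the existence of a $(\rho_0,\u)$-equivariant vector field $X$ that is $k$-lipschitz for some $k<0$, in the sense of~\eqref{eq:lipschitz2}. The natural approach is to ``integrate'' the problem: choose a smooth path $\rho_t \in \Hom(\Gamma,G)$ with $\rho_0$ as given and tangent cocycle~$\u$. The infinitesimal length inequality above says that to first order in~$t$,
\[
\sup_{\gamma} \frac{\ell(\rho_t(\gamma))}{\ell(\rho_0(\gamma))} \;\le\; 1 - k_0 t + o(t),
\]
so by Gu\'eritaud--Kassel's macroscopic theorem there exist $(\rho_0,\rho_t)$-equivariant maps $f_t \colon \Ht \to \Ht$ whose optimal Lipschitz constant $K_t$ satisfies $K_t = 1 + kt + o(t)$ with $k < 0$. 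One then wishes to define $X := \lim_{t \to 0^+} (f_t - \mathrm{id})/t$ via Arzel\`a--Ascoli, using that the candidates $(f_t - \mathrm{id})/t$ are uniformly bounded on compact subsets of $\Ht$ after a gauge fixing (e.g.\ normalizing by an appropriate translate in~$\gg$), and then verify that the limit $X$ is $(\rho_0,\u)$-equivariant and inherits the infinitesimal $k$-lipschitz bound.

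The main obstacle is precisely this passage from the macroscopic to the infinitesimal regime: optimal Lipschitz $(\rho_0,\rho_t)$-equivariant maps need neither be unique nor vary smoothly in~$t$, so the naive derivative need not exist. Overcoming this requires either a direct construction of the contracting field $X$ --- for instance by solving a convex variational problem in the space of equivariant fields of bounded infinitesimal lipschitz seminorm, analogous to Thurston's stretch construction --- or a careful selection argument showing that uniformly normalized $f_t$ converge subsequentially to a $k$-lipschitz limit whose lipschitz constant matches the infimum $-k_0$ dictated by the length ratios. In the convex cocompact case, the compactness of the convex core gives enough control to carry this out; the general setting with cusps needs additional asymptotic analysis to rule out escape of the optimal field towards ideal points, which is the technical heart of~\cite{dgknew}.
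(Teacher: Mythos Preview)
Your forward direction is exactly right and matches the paper: Proposition~\ref{prop:proper} gives properness from a contracting lipschitz field, and the line-bundle structure follows.

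For the reverse direction, however, your primary strategy differs from the one the paper sketches, and the difference matters. You propose to \emph{integrate} the cocycle to a path $\rho_t$, invoke the macroscopic Gu\'eritaud--Kassel theorem to get equivariant $K_t$-Lipschitz maps $f_t$, and then \emph{differentiate} to recover a $k$-lipschitz field. The paper (following \cite{MR3465975}) does the opposite: it stays entirely in the infinitesimal category and proves an infinitesimal analogue of the Gu\'eritaud--Kassel theorem directly. Namely, one studies the infimum $k_{\min}$ of lipschitz constants over all $(\rho_0,\u)$-equivariant vector fields and shows that if $k_{\min}\ge 0$ then any minimizer stretches some geodesic lamination at rate exactly $k_{\min}$; this lamination furnishes a current $\mu$ with $\dd\ell_\mu(\u)\ge 0$, contradicting properness.

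Your route has two concrete obstacles that the direct approach avoids. First, the passage from $\sup_\gamma \dd\ell_\gamma(\u)/\ell(\gamma)=-k_0<0$ to $\sup_\gamma \ell(\rho_t(\gamma))/\ell(\rho_0(\gamma))\le 1-k_0 t+o(t)$ requires the second-order error in $t$ to be uniform in $\gamma$, which you have not established and which is not automatic. Second, and more seriously, the paper explicitly warns that the Arzel\`a--Ascoli theorem \emph{fails} for lipschitz vector fields: a bounded sequence of $k$-lipschitz fields need not subconverge to a vector field, but only to a \emph{convex field} (a set-valued section). So even granting good maps $f_t$, your proposed limit $X=\lim (f_t-\mathrm{id})/t$ may not exist as a vector field. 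This is precisely the technical crux of \cite{MR3465975}, and it is handled there by developing the theory of convex fields and proving the stretched-lamination statement in that enlarged class. Your alternative suggestion---a direct variational construction in the space of equivariant fields---is in fact much closer to what is actually done, but carrying it out requires exactly this convex-field machinery, which your sketch does not supply.
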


Note that the assumption that $\dd \ell_\gamma(\u) \leq 0$ for a least one 
$\gamma \in \Gamma$ is satisfied by either $\u$ or $-\u$, and that the affine 
actions $\Phi_G(\rho_0, \u)$ and $\Phi_G(\rho_0, -\u)$ are conjugate by an 
orientation reversing affine transformation.
The proof of Theorem~\ref{thm:dgk} follows the same strategy as the work of 
Gu\'eritaud-Kassel~\cite{MR3626591} discussed in Section~\ref{sec:Lipschitz}. 
The key point is that if the infimum $k_{\min}$ of lipschitz constants for 
$(\rho_0, \u)$-equivariant vector fields is non-negative, 
then any $(\rho_0, \u)$-equivariant vector field realizing $k_{\min}$ must infinitesimally stretch (the lift of) a geodesic lamination in the convex core of $\Sigma$ at a rate precisely equal to $k_{\min}$.

However, contrary to the setting of Lipschitz maps, 
the Arzel\`a-Ascoli compactness theorem does \emph{not} hold for lipschitz vector fields. 
Indeed, the limit of a (bounded) sequence of $k$-lipschitz vector fields is not necessarily a vector field, 
but instead a convex set valued section of 
the tangent bundle, called a \emph{convex field}. Much technical care is 
needed in adapting the arguments of~\cite{MR3626591} to this setting.

\subsection{Tameness of Margulis spacetimes}

The topology of a Margulis spacetime may be read off from Theorem~\ref{thm:dgk} 
and Proposition~\ref{prop:proper}.
By Theorem~\ref{thm:dgk}, every proper affine action of a free group $\Gamma$ 
on $\Eto$ comes from a contracting infinitesimal deformation of a 
non-compact hyperbolic surface $\Sigma$ as in Proposition~\ref{prop:proper}. 
The quotient Margulis spacetime $M = \Gamma \backslash \Eto$ is an affine 
line bundle over the surface $\Sigma$. 
This implies the topological tameness of $M$ (Theorem~\ref{thm:TopologicalTameness}). 
See the discussion in Section~\ref{sec:tameness}.

\subsection{The moduli space of Margulis spacetimes: Strip deformations}\label{sec:strips}
Fix a discrete embedding $\rho_0$ of a free group $\Gamma$ into $G = \SOto$. 
It follows from Proposition~\ref{prop:PropernessLengths} or Theorem~\ref{thm:dgk} 
that the set of cohomology classes $[\u] \in \mathsf{H}^1(\Gamma_0, \gg_{\Ad})$ 
of infinitesimal deformations of $\rho_0$ for which the affine action 
$\Phi_G(\rho_0, \u)$ is proper is an open cone in $\mathsf{H}^1(\Gamma_0, \gg_{\Ad})$, sometimes called the \emph{admissible cone} 
or \emph{cone of proper deformations}. The admissible cone is the disjoint union of a properly convex cone and its negative.
One convex component contains the infinitesimal deformations which uniformly 
contract the geometry of the surface $\Sigma_0 = \rho_0(\Gamma)\backslash \Ht$ 
in the sense of~\eqref{eqn:shrinking} and Theorem~\ref{thm:dgk}. 
The other component
contains the infinitesimal deformations which uniformly lengthen. 
The projectivization of the admissible cone will be denoted $\adm(\rho_0)$; 
it is the moduli space of Margulis spacetimes associated to a fixed 
hyperbolic surface~$\Sigma_0$, considered up to affine equivalence. 

In~\cite{MR3480555, dgknew}, 
Danciger-Gu\'eritaud-Kassel give a combinatorial parameterization of $\adm(\rho_0)$ 
in terms of the \emph{arc complex} of $\Sigma_0$, 
in a similar spirit to Penner's cell decomposition of the decorated 
Teichm\"uller space of a punctured surface~\cite{penner}.
The parameterization realizes each contracting deformation 
$[\u] \in \mathsf{H}^1(\Gamma_0, \gg_{\Ad})$ as an 
\emph{infinitesimal strip deformation}. 
The following construction goes back to Thurston~\cite{ThurstonStretch} 
(see also Papadopolous-Th\'eret~\cite{MR2587462}). 
Let us assume the hyperbolic surface $\Sigma_0$ has no cusp, so that all 
infinite ends are funnels, that is
$\rho_0$ is convex cocompact (this assumption was present in~\cite{MR3480555}, 
but removed in~\cite{dgknew}). Starting with the hyperbolic surface $\Sigma_0$: 
\begin{itemize}
\item Choose a collection of disjoint non-isotopic properly embedded 
geodesic arcs $\alpha_1, \ldots, \alpha_r \subset \Sigma_0$.
\item For each $1 \leq i \leq r$ choose an arc $\alpha_i'$ disjoint from, 
but very close to $\alpha_i$ (in particular isotopic to $\alpha_i$) so 
that $\alpha_i$ and $\alpha_i'$ bound a \emph{strip} in $\Sigma_0$, that is a 
region isometric to the region between two ultraparallel geodesics in the 
hyperbolic plane $\Ht$. Let $p_i$ and $p_i'$ be the points on 
$\alpha_i$ and $\alpha_i'$ respectively with minimal distance. 
The geodesic segment $[p_i, p_i']$ is called the \emph{waist} of the strip.
\end{itemize}
Then, for each $i$, delete the strip bounded by $\alpha_i$ and $\alpha_i'$ 
(we assume the strips are disjoint), and glue $\alpha_i$ to $\alpha_i'$ by 
the isometry that identifies $p_i$ to $p_i'$. The result is a hyperbolic 
surface $\Sigma_1 = \rho_1(\Gamma) \backslash \Ht$ equipped with a natural 
$1$-Lipschitz map 
$ \Sigma_0 \xrightarrow{~f~} \Sigma_1$ 
(which collapses the strips). 
The holonomy representation 
$\Gamma \xrightarrow{~\rho_1~} G$ 
(defined here up to conjugation) 
is a new representation of~$\Gamma$, which we call a \emph{strip deformation} of 
$\rho_0$. See Figure~\ref{fig:strip}. One can show that, even though $f$ is only 
$1$-Lipschitz, in fact if the arcs $\alpha_1, \ldots, \alpha_r$ cut $\Sigma_0$ 
into disks, then the Lipschitz constant may be improved to $< 1$ by deforming $f$; 
in particular, \eqref{eqn:macro-length} holds. 

\begin{figure}[h]
\centering
\vspace{0.2cm}
\includegraphics[width=10.0cm]{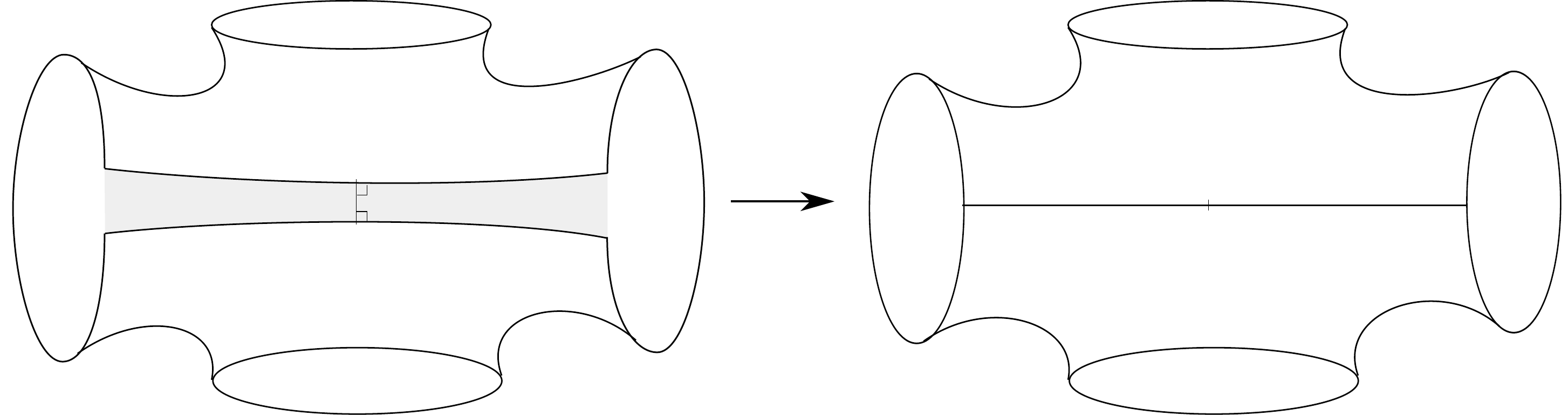}
\vspace{8pt}
\caption{A strip deformation along a single arc.}
\label{fig:strip} 
\end{figure}
 
Now consider a family $\rho_t$ of strip deformations of $\rho_0$, as follows. For each 
$1 \leq i \leq r$, let $\alpha_i'$ move closer and closer to the fixed arc 
$\alpha_i$ in such a way that the endpoint $p_i \in \alpha_i$ of the waist 
remains constant and the width  
\[
 d(p_i, p_i') = w_i t + O(t^2) 
 \]  
tends to zero at some linear rate $w_i \in \R^+$.
The cohomology class $[\u] \in \mathsf{H}^1(\Gamma_0, \gg_{\Ad})$ of the derivative 
of the path $\rho_t$ is called an \emph{infinitesimal strip deformation} of $\rho_0$. 
The points $p_i \in \alpha_i$ are called the \emph{waists} and the coefficients 
$w_i$ are called the \emph{widths} of the infinitesimal strip deformation.
Note that if the arcs cut $\Sigma_0$ into disks, then every closed curve must 
cross the arcs a number of times roughly proportional to its length, which 
should make plausible the fact that lengths of closed curves are decreasing at a 
uniform rate as in~\eqref{eqn:shrinking};  hence $\u \in \adm(\rho_0)$ in this case.

Danciger-Gu\'eritaud-Kassel~\cite{MR3480555} proved that every contracting infinitesimal 
deformation $\u$ is realized by an infinitesimal strip deformation. 
The realization becomes unique if further requirements are put on the strips. 
For example, let us require that each $\alpha_i$ crosses the boundary of the convex core 
$\Omega \subset \Sigma_0$ at a right angle, 
and that $p_i$ is the midpoint  of $\Omega \cap \alpha_i$.
Strip deformations of this type are naturally organized into 
an abstract simplicial complex  $\AC$, 
with:

\begin{itemize}
\item a vertex for each geodesic arc $\alpha$ which exits the convex core
$\Omega$ orthogonal to $\partial \Omega$ at both ends;
\item a $k$-dimensional simplex for each collection of $k+1$ pairwise disjoint 
geodesic arcs $\alpha_1, \ldots, \alpha_{k+1}$. 
\end{itemize}
This combinatorial object $\AC$ is the {\em arc complex\/} of $\Sigma_0$.
Note that it depends only on the topology of $\Sigma_0$.

Consider the map 
\[ 
\AC \xrightarrow{~\bfstrip~} \mathsf{H}^1(\Gamma_0, \gg_{\Ad})\]
defined as follows. 
Write any element $x \in \overline{\mathcal X}$ as a formal weighted sum of arcs
\begin{align*}
x &= w_1 \alpha_1 + \cdots + w_{k+1} \alpha_{k+1}
\end{align*}
with each $w_i > 0$ and $\sum w_i = 1$.
Then define $\bfstrip (x)$ to be the infinitesimal strip deformation for the 
arcs $\alpha_1, \ldots, \alpha_{k+1}$, where for $1 \leq i \leq k+1$, the 
waist of the infinitesimal strip at $\alpha_i$ is the midpoint of 
$\alpha_i \cap \Omega$ and the width is~$w_i$. 
Denote by $\mathcal X$ the subset of $\overline{\mathcal X}$ obtained by removing all open faces corresponding to collections of arcs which fail to cut the surface into disks. 

Penner~\cite{penner} showed that $\mathcal X$ is  homeomorphic to a ball of dimension 
one smaller than dimension of the Fricke-Teichm\"uller space 
of complete hyperbolic structures on $\Sigma_0$.  
The map $\bfstrip$ sends $\mathcal X$  
into the contracting half of the admissible cone in $\mathsf{H}^1(\Gamma, \gg_{\Ad \rho_0})$. 
The projectivization of the restriction of $\bfstrip$, denoted 
\[ 
\mathcal X \xrightarrow{~\strip~} \adm(\rho_0),
\]
is then a map between balls of the same dimension.
The main theorem of~\cite{MR3480555} (extended in~\cite{dgknew} to  when 
$\Sigma_0$ may have cusps), is:
\begin{thm}\label{thm:strips}
$\mathcal X \xrightarrow{~\strip~} \adm(\rho_0)$ is a homeomorphism.
\end{thm}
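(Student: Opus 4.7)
The strategy is to show that $\strip$ is a continuous, proper, injective map between two open topological balls of the same dimension, and then invoke invariance of domain together with the connectedness of $\adm(\rho_0)$ to conclude that it is a homeomorphism.

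\emph{Step 1: The image lies in $\adm(\rho_0)$.} First I would verify that for $x = \sum_{i=1}^{k+1} w_i \alpha_i \in \mathcal X$, with $w_i > 0$ and the arcs $\alpha_1, \ldots, \alpha_{k+1}$ cutting $\Sigma_0$ into disks, the cocycle $\bfstrip(x)$ lies in the contracting half of the admissible cone. The central calculation is that for any $\gamma \in \Gamma \setminus \{e\}$, the derivative $\dd \ell_\gamma(\bfstrip(x))$ is a negative sum of contributions, one for each transverse intersection of the geodesic representative of $\gamma$ with an arc $\alpha_i$, of magnitude proportional to $w_i$ and the sine of the crossing angle. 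Because the arcs cut $\Sigma_0$ into disks, every closed geodesic crosses the arc system a number of times roughly proportional to its length, yielding
\[
\sup_{\gamma \in \Gamma \setminus \{e\}} \frac{\dd \ell_\gamma(\bfstrip(x))}{\ell(\gamma)} < 0,
\]
so $\bfstrip(x)$ is admissible by Proposition~\ref{prop:PropernessLengths}.

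\emph{Step 2: Continuity, injectivity, and properness.} Continuity of $\strip$ follows directly from the formulation. For injectivity, I would recover the weighted arc system $(w_i, \alpha_i)$ from the cohomology class $\bfstrip(x)$ by probing it with length-derivatives along curves crossing exactly one $\alpha_i$: each strip acts infinitesimally as a simple shear localized near its arc, so these derivatives isolate the widths and support. For properness, suppose $x_n \in \mathcal X$ eventually leaves every compact set; after passing to a subsequence, $x_n$ converges in $\overline{\mathcal X}$ to a weighted arc system $x_\infty \in \overline{\mathcal X} \setminus \mathcal X$ whose support fails to cut $\Sigma_0$ into disks. Some complementary component then contains an essential simple closed curve $\gamma$ disjoint from every $\alpha_i$, so $\dd\ell_\gamma(\bfstrip(x_\infty)) = 0$ and the ratio $\dd \ell_\gamma(\bfstrip(x_n))/\ell(\gamma) \to 0$. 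By Theorem~\ref{thm:GLM}, $\strip(x_n)$ exits every compact subset of $\adm(\rho_0)$.

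\emph{Step 3: Conclusion.} Both $\mathcal X$ (by Penner's theorem~\cite{penner}) and $\adm(\rho_0)$ (as an open convex subset of a projective space) are open topological balls, and the dimension counts match. Invariance of domain implies that $\strip$ is an open map onto its image, while properness together with injectivity forces the image to be closed in $\adm(\rho_0)$; connectedness of the convex set $\adm(\rho_0)$ then yields surjectivity, and $\strip$ is a homeomorphism.

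\emph{Main obstacle.} The deepest point is the uniform contraction estimate in Step 1: one must convert the purely combinatorial condition ``arcs cut $\Sigma_0$ into disks'' into a uniform negative upper bound for $\dd\ell_\gamma(\bfstrip(x))/\ell(\gamma)$ valid across \emph{all} $\gamma \in \Gamma$, not just those representable by short curves. The cusped case (as extended in~\cite{dgknew}) adds substantial technical difficulty, since arcs may exit into cusp neighborhoods rather than meet the convex core boundary orthogonally, and the normalization that makes $\bfstrip$ well-defined on $\mathcal X$ must be chosen carefully; moreover, in the properness step one must distinguish genuine escape to $\partial \mathcal X$ (loss of the disk-cutting property) from mere passage to a lower-dimensional face of $\mathcal X$ (some $w_i \to 0$ while disk-cutting persists), which should not correspond to escape in $\adm(\rho_0)$.
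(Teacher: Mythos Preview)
Your properness argument has a genuine gap. You assume that any divergent sequence $x_n$ in $\mathcal X$ has a subsequence converging in $\overline{\mathcal X}$ to some $x_\infty \in \overline{\mathcal X}\setminus\mathcal X$. But $\overline{\mathcal X}$ is \emph{not compact} in general: for the one-holed torus or one-holed Klein bottle the arc complex is infinite, and a sequence $x_n$ can escape $\mathcal X$ by having its supporting arcs themselves become more and more complicated, with no subsequential limit in $\overline{\mathcal X}$ at all. The paper treats this case explicitly: the arcs Hausdorff-converge to arcs spiraling onto a geodesic lamination $\Lambda$ in the convex core, and the limiting cocycle is a ``parabolic strip'' deformation whose waists are infinitely deep in $\Lambda$. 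One then shows that closed curves $\gamma$ traveling close to $\Lambda$ have $\dd\ell_\gamma/\ell(\gamma)\to 0$, so the limit lies on $\partial\adm(\rho_0)$. Your argument covers only the first of the two escape modes and says nothing about the second, which is in fact the harder one.

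There is also a methodological divergence worth noting. The paper does \emph{not} prove global injectivity of $\strip$; instead it proves that $\strip$ is a \emph{local homeomorphism}, and then properness plus simple connectedness of the target ball gives a degree-one covering, hence a homeomorphism. Your proposed injectivity argument---recovering the arc system by probing with curves that cross exactly one $\alpha_i$---is not obviously workable: such curves need not exist (the arcs may all be mutually crossing-free but every simple closed curve may cross several of them), and in any case two different weighted arc systems could a priori yield cohomologous cocycles. Establishing injectivity directly would require substantially more than what you sketch; the local-homeomorphism route avoids this.
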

\noindent
The  proof has two parts:
first,  $\mathsf{Strip}$ is a local homeomorphism,
and second, $\mathsf{Strip}$ is proper. 
Both are nontrivial, but let us comment only on the second. 
Consider a sequence $x_n$ going to infinity in $\mathcal X$. 
There are two ways this can happen. 
First, it could be that, up to subsequence, $x_n$ converges in $\overline{\mathcal X}$ to a point 
$x_\infty \in \overline{\mathcal X} \setminus \mathcal X$, 
which is supported on arcs whose complement includes a subsurface of nontrivial topology. 
The limit \[[\u_\infty] = [\bfstrip(x_\infty)]\] of the projective classes 
$[\u_n] = \mathsf{Strip}(x_n)$  
is a projective class of infinitesimal deformations leaving 
unchanged the lengths of closed curves in this subsurface; 
hence $[\u_\infty] \in \partial \adm(\rho_0)$.
Consider second the case that $x_n$ diverges even in~$\overline{\mathcal X}$. 
Then the supporting arcs of $x_n$ become more and more complicated and, after 
taking a subsequence, converge in the Hausdorff sense to (up to twice as many) 
geodesic arcs $\beta_1, \ldots, \beta_s$ which are no longer properly embedded, 
but rather accumulate in one direction around a geodesic lamination $\Lambda$ 
in the convex core $\Omega$. The limit $[\u_\infty]$ of the strip deformations 
$[\u_n] = \mathsf{Strip}(x_n)$ should be thought of as a strip deformation for 
which the waists of the strips are infinitely deep in the lamination $\Lambda$; 
in other words, $[\u_\infty]$ is obtained by removing (infinitesimal) parabolic 
strips, each of whose thickness goes to zero as the strip winds closer and 
closer to $\Lambda$. The lengths of longer and longer closed curves 
$\gamma \in \Gamma$ which travel very close to $\Lambda$ are affected 
(proportionally) less and less by $[\u_\infty]$, showing that uniform
 contraction~\eqref{eqn:shrinking} fails for $[\u_\infty]$, so that again 
$[\u_\infty] \in \partial \P\big(\adm(\rho_0)\big)$. 
Thus $\mathsf{Strip}$ is proper.

Note that Minsky~\cite{Lams} used strip deformations with parabolic strips to 
show that there exist affine deformations of a one-holed torus which are not  proper, 
but for which the Margulis spectrum is positive. 
See the discussion in Section~\ref{sec:misc}.

\subsection{Strip deformations and crooked planes}\label{sec:strips-and-CPs}

One consequence of Theorem~\ref{thm:strips} is the resolution of the 
\emph{Crooked Plane Conjecture}, see Section~\ref{sec:crooked}.

\begin{cor}\label{cor:CPs}
Consider a discrete  embedding $\Gamma \xhookrightarrow{~\rho_0~} G$ of a free group 
$\Gamma$ of rank $r \geq 1$,  and a deformation cocycle 
$\Gamma \xrightarrow{~\u~} \gg$. Suppose that the affine action 
$\Phi_G(\rho_0, \u)$ of $\Gamma$ on $\Eto$ is properly discontinuous. 
Then there exists a fundamental domain in $\Eto$ bounded by $2r$ 
pairwise disjoint crooked planes.
\end{cor}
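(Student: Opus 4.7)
My plan is to use the parameterization of $\adm(\rho_0)$ by the arc complex (Theorem~\ref{thm:strips}) to extract from the given proper action an explicit system of arcs on $\Sigma_0$, then promote these arcs to a configuration of disjoint crooked planes via Lemma~\ref{lemma:disjointCPs} and the stem quadrant criterion (Lemma~\ref{lemma:StemQuadrants}).

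First I would reduce to an infinitesimal strip picture. By Theorem~\ref{thm:dgk} (and after possibly reversing orientation), the cohomology class $[\u]$ of the translational cocycle lies in the contracting half of $\adm(\rho_0)$. By Theorem~\ref{thm:strips}, there exists $x \in \mathcal X$ with $\mathsf{Strip}(x) = [\u]$. Since the open top-dimensional simplices of $\mathcal X$ correspond to systems of $r$ arcs cutting $\Sigma_0$ into disks (the correct count: for $\chi(\Sigma_0) = 1-r$, such a maximal system has exactly $r$ arcs), and since $\adm(\rho_0)$ is open, I may perturb $\u$ slightly and replace $[\u]$ by a nearby class still defining a properly discontinuous affine action whose $\mathsf{Strip}$-preimage lies in the interior of a top-dimensional simplex. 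This yields disjoint arcs $\alpha_1,\dots,\alpha_r$ on $\Sigma_0$ together with positive waist widths.

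Next I would promote the arc system to a combinatorial skeleton in $\Ht$. Lifting the $\alpha_i$ and their $\Gamma_0$-translates gives a locally finite collection of geodesics in $\Ht$ whose complementary regions, projected to $\Sigma_0$, are the disks of the decomposition. Choosing one such region as a fundamental polygon $D \subset \Ht$ for $\Gamma_0$, its $2r$ sides lie in geodesics $\ell_1^\pm,\dots,\ell_r^\pm$, paired by a free basis $\gamma_1,\dots,\gamma_r$ of $\Gamma$ with $\rho_0(\gamma_i)(\ell_i^-) = \ell_i^+$; orient each $\ell_i^\pm$ so that the corresponding unit spacelike vector $\vw_i^\pm$ points away from $D$.

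The core step is to construct vertices $\vx_i^\pm \in \Eto$ such that the crooked halfspaces $\CH_i^\pm := \CH(\vx_i^\pm, \ell_i^\pm)$ are pairwise disjoint and satisfy $g_i(\CH_i^-) = \overline{\Eto \setminus \CH_i^+}$, where $g_i = \Phi_G(\rho_0,\u)(\gamma_i)$. I would obtain these vertices from the lipschitz vector field $X$ provided by Theorem~\ref{thm:dgk}, via the $\Gamma$-equivariant fibration $\varpi: X - \gg \to \Ht$ of Proposition~\ref{prop:proper}: for each side $\ell_i^\pm$, place $\vx_i^\pm$ at $\varpi^{-1}(p_i^\pm)$ for a chosen basepoint $p_i^\pm$ on $\ell_i^\pm$, shifted by a positive multiple of $\vw_i^\pm$ reflecting the infinitesimal strip width. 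The contraction property of $X$ manifests, after identifying $\Eto$ with $X-\gg$, as the condition that for any two sides $\ell_i^\pm, \ell_j^\pm$ of $D$ the difference $\vx_i^\pm - \vx_j^\pm$ lies in the open cone $Q(\ell_i^\pm) - Q(\ell_j^\pm)$; by Lemma~\ref{lemma:StemQuadrants}, this gives disjointness of $\CH_i^\pm$ and $\CH_j^\pm$. The equivariance $g_i(\CH_i^-) = \overline{\Eto \setminus \CH_i^+}$ holds because $g_i$ maps the oriented geodesic $\ell_i^-$ to $-\ell_i^+$ and the waist data was chosen $\Gamma$-equivariantly.

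The main obstacle is the stem-quadrant disjointness computation in the third step: translating the abstract contraction condition \eqref{eqn:shrinking} into a concrete positivity statement about vertex differences lying in $Q(\ell_i^\pm) - Q(\ell_j^\pm)$. This is essentially the content of the technical lemmas in Danciger--Gu\'eritaud--Kassel~\cite{MR3480555}, which show that an infinitesimal strip deformation, when scaled by a sufficiently small positive parameter, produces vertex offsets whose pairwise differences genuinely lie in the open stem-quadrant differences associated to \emph{any} two sides of the polygonal tiling, not merely adjacent ones. Once this is established, Lemma~\ref{lemma:disjointCPs} applies directly and produces the desired fundamental domain bounded by the $2r$ crooked planes $\partial \CH_i^\pm$.
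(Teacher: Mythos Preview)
Your overall strategy --- apply Theorem~\ref{thm:strips} to realize $[\u]$ as an infinitesimal strip deformation, lift the arcs to $\Ht$, and assign crooked planes to the sides of a fundamental polygon --- matches the paper's approach. But the execution has two genuine problems.

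First, the perturbation step is both wrong and unnecessary. Replacing $[\u]$ by a nearby class changes the affine action: you would then build a fundamental domain for $\Phi_G(\rho_0,\u')$, not for $\Phi_G(\rho_0,\u)$. Moreover, your claim that top-dimensional simplices of $\mathcal X$ correspond to systems of $r$ arcs is backwards. The \emph{minimal} systems cutting $\Sigma_0$ into a single disk have $r$ arcs; maximal systems (hyper-ideal triangulations) have more. No perturbation is needed: any $x\in\mathcal X$ already lies in an open face whose supporting arcs cut $\Sigma_0$ into disks, and regardless of how many arcs there are, a fundamental polygon for $\Gamma_0$ in $\Ht$ is bounded by exactly $2r$ lifted arcs.

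Second, your vertex construction is too vague to carry the disjointness argument. The fiber $\varpi^{-1}(p)$ is an affine line, not a point, and ``shifted by a positive multiple of $\vw_i^\pm$'' does not pin down $\vx_i^\pm$; there is no reason the resulting differences land in the stem-quadrant cones. The paper avoids the smooth lipschitz field entirely here and instead uses the \emph{piecewise Killing} vector field $X$ that \emph{is} the infinitesimal strip deformation: on each tile $\Delta$ it equals a Killing field $\xi_\Delta$, and the vertex assigned to a lifted arc $\widetilde\alpha$ separating tiles $\Delta,\Delta'$ is the average $\mathbf v_{\widetilde\alpha}=(\xi_\Delta+\xi_{\Delta'})/2$. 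The relative motion $\psi_{\Delta,\Delta'}=\xi_\Delta-\xi_{\Delta'}$ is, by definition of a strip deformation, an infinitesimal translation with axis orthogonal to $\widetilde\alpha$ --- i.e.\ it lies in the stem quadrant $Q(\widetilde\alpha)$. Lemma~\ref{lemma:StemQuadrants} then gives nesting of the crooked halfspaces for adjacent arcs immediately (this is Lemma~\ref{lem:ArcHalfspace}), and an induction propagates this across the tiling. That explicit algebraic link between strip width data and stem quadrants is the missing idea in your third step.
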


Before explaining the proof, a quick note about more general fundamental domains. 
Recall from Proposition~\ref{prop:proper} that any $(\rho_0, \u)$-equivariant 
vector field $Y$ which is $k$-lipschitz, for some $k < 0$, 
determines a $\Gamma$-equivariant fibration 
\[ \Eto \xrightarrow{~\varpi~}  \Ht. \]
If $\Delta\subset\Ht$ is a fundamental domain, 
then $\varpi\inv\Delta\subset\Eto$ is a fundamental domain.
The surfaces bounding $\varpi\inv\Delta$ are ruled by affine lines, 
but do not have any other particularly nice structure, 
and are far from canonical. 
Indeed much freedom exists in choosing $Y$. 
However, 
Theorem~\ref{thm:strips} 
implies $\u$ is realized uniquely as an infinitesimal strip deformation. 

As described  below,
an infinitesimal strip deformation is a $(\rho_0, \u)$-equivariant piecewise Killing vector field 
$X$ on $\Ht$. 
The vector field $X$ is discontinuous along a $\rho_0(\Gamma)$-invariant collection 
$\widetilde{\mathscr A}$ of pairwise disjoint geodesic arcs, 
namely the lifts of the arcs $\mathscr A = \{\alpha_1, \ldots, \alpha_r\}$ 
supporting the strip deformation. 
Although $X$ is only $0$-lipschitz, 
it is sufficiently contractive to define a singular version of the fibration from 
Proposition~\ref{prop:proper}. 
The surfaces in $\Eto$ that lift arcs $\widetilde\alpha \in \widetilde{\mathscr A}$ of the strip 
deformation are precisely crooked planes! 
Indeed, 
crooked planes are seen in the limit of the fibrations for $k$-lipschitz vector fields $Y$ 
converging to $X$, 
with $k \to 0^-$. 
See Figure~\ref{fig:quartic}.
Here is the precise recipe for finding crooked planes from strip deformation data. 

\begin{figure}[h]\includegraphics[width=3.5cm]{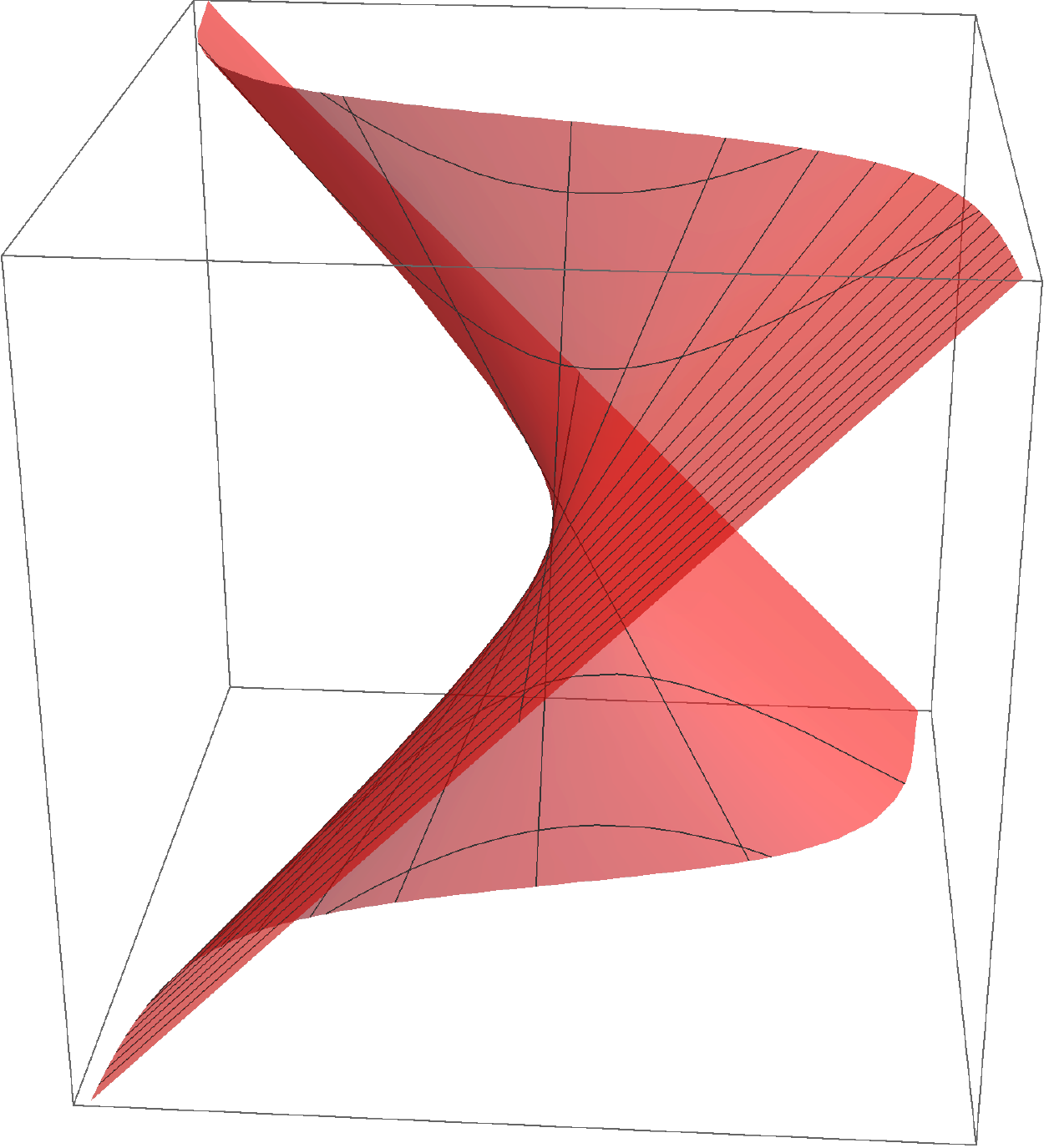}
\hspace{0.15cm}
\includegraphics[width=3.5cm]{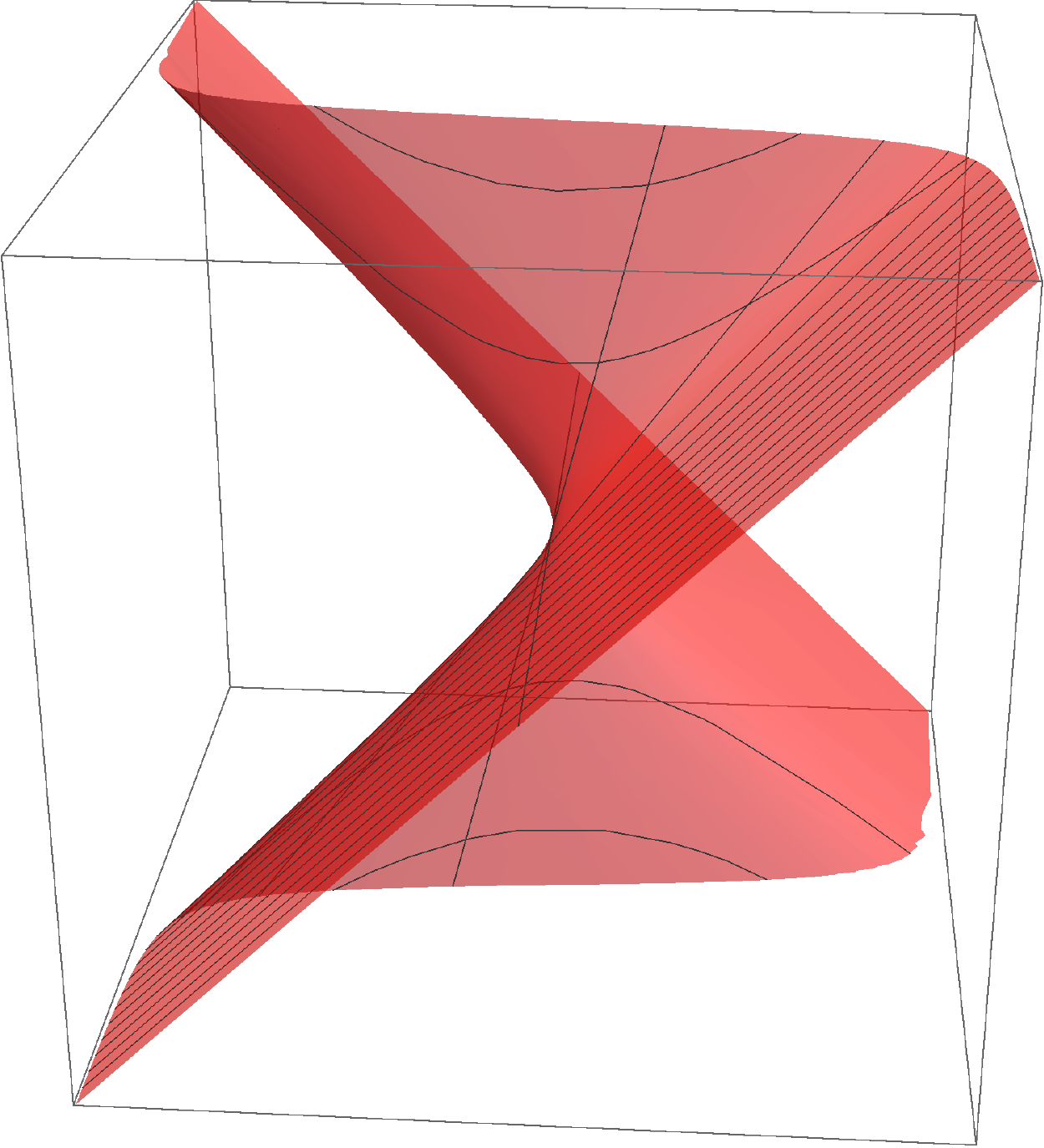}
\vspace{8pt}
\caption{The preimage of an arc $\widetilde \alpha \subset \Ht$ under the 
fibration determined by a $k$-lipschitz vector field $Y$, 
as $Y$ converges to an infinitesimal strip deformation $X$ along $\widetilde \alpha$ with 
\ $d_Y'(p,q) \to 0$ for all $p,q\in\tilde\alpha$,
as $Y\to Y_\infty$.  
The limit is a crooked plane.}
\label{fig:quartic} 
\end{figure}

First, we describe in more detail the $(\rho_0, \u)$-equivariant vector field 
$X$ associated to the strip deformation realizing $X$. The connected components 
of the complement of $\alpha_1 \cup \cdots \cup \alpha_r$ in $\Sigma_0$ are 
each homeomorphic to a disk (if the collection $\mathscr A$ of arcs is maximal, 
each component is a hyper-ideal triangle). We denote the set of these components 
by~$\mathscr T$. The lift to $\Ht$ of $\mathscr T$ is denoted $\widetilde{\mathscr T}$; 
its elements are the \emph{tiles} of a $\rho_0(\Gamma)$-invariant tiling of $\Ht$. Then:
\begin{itemize}
\item The restriction of $X$ to each of the tiles $\Delta \in \widetilde{\mathscr T}$ 
is a Killing field $\xi_\Delta \in \gg$. 
\item If two tiles $\Delta, \Delta' \in \widetilde{\mathscr T}$ are adjacent 
along an arc $\widetilde{\alpha} \in \widetilde{\mathscr A}$, then the 
relative motion of $\Delta$ with respect to $\Delta'$, namely the difference 
$$\psi_{\Delta, \Delta'} := \xi_\Delta - \xi_{\Delta'} \in \gg,$$ is an 
infinitesimal translation along an axis orthogonal to $\widetilde{\alpha}$ in the 
direction of $\Delta'$. If $\widetilde{\alpha}$ is a lift of $\alpha_i$, then 
the axis of $\psi_{\Delta,\Delta'}$ intersects $\widetilde{\alpha}$ at the lift 
of the waist $p_i \in \alpha_i$ and the velocity of the translation is equal to 
the width $w_i$, as defined in Section~\ref{sec:strips}. Note 
$\psi_{\Delta,\Delta'} = - \psi_{\Delta', \Delta}$.
\item Since $X$ must be discontinuous along $\widetilde{\alpha}$, 
we define $X$ along $\widetilde{\alpha}$ to agree with the Killing field
 $$\mathbf{v}_{\widetilde{\alpha}} := (\xi_{\Delta} + \xi_{\Delta'})/2$$ 
which is the average of the Killing fields associated to the adjacent tiles 
$\Delta$ and $\Delta'$. Think of $\mathbf{v}_{\widetilde{\alpha}}$ as the 
infinitesimal motion of the arc $\widetilde{\alpha}$ under the deformation.
\end{itemize}

Each arc $\widetilde{\alpha} \in \widetilde{\mathscr A}$ together with its 
infinitesimal motion $\mathbf{v}_{\widetilde{\alpha}}$ determines a crooked plane,
$$\CP_{\widetilde \alpha} := \CP(\mathbf{v}_ {\widetilde \alpha}, \widetilde \alpha).$$
Here we identify $\Eto$ with the (affine space of the) Lie algebra $\gg$, 
as in Section~\ref{sec:crooked}, and recall that for $\ell$ a geodesic in $\Ht$ 
and $\mathbf v \in \gg$ a Killing vector field, the crooked plane 
$\CP(\mathbf v, \ell) \subset \gg$ is the collection of Killing fields 
$\mathbf w \in \gg$ such that $\mathbf w - \mathbf v$ has a non-attracting
fixed point on the closure $\overline{\ell}$ of $\ell$ in $\overline{\Ht}$. 
Equipping $\ell$ with a transverse orientation, the closed 
crooked halfspace $\CH(\mathbf v, \ell) \subset \gg$ is the collection of 
Killing fields $\mathbf w \in \gg$ such that $\mathbf w - \mathbf v$ has a 
non-attracting fixed point on the closure  in $\overline{\Ht}$ of the 
positive halfspace $h_\ell$ bounded by $\ell$.

\begin{lemma}\label{lem:ArcHalfspace}
Let $\widetilde{\alpha}, \widetilde{\alpha}' \in \widetilde{\mathscr A}$ and 
endow each arc with a transverse orientation so that the positive halfspace 
of  $\widetilde{\alpha}$ is contained in that of $\widetilde{\alpha}'$. Then 
\begin{align}\label{eqn:inclusion}
\CH(\mathbf{v}_{\widetilde{\alpha}}, \widetilde{\alpha}) \subset
 \mathrm{Int}\left(\CH(\mathbf{v}_{\widetilde{\alpha}'}, \widetilde{\alpha}')\right).
\end{align} 
\end{lemma}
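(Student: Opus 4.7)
The plan is to reformulate \eqref{eqn:inclusion} as a condition on stem quadrants, verify it in the simple case where $\widetilde{\alpha}$ and $\widetilde{\alpha}'$ bound a common tile, and then chain through intermediate tiles. Since $\CH(\mathbf{v}, \ell)$ and $\CH(\mathbf{v}, -\ell)$ share the crooked plane as their common boundary and together cover $\mathbb{E}^{2,1}$, the complement of $\mathrm{Int}\,\CH(\mathbf{v}_{\widetilde{\alpha}'}, \widetilde{\alpha}')$ equals $\CH(\mathbf{v}_{\widetilde{\alpha}'}, -\widetilde{\alpha}')$. Hence \eqref{eqn:inclusion} is equivalent to the disjointness
\[
\CH(\mathbf{v}_{\widetilde{\alpha}}, \widetilde{\alpha}) \cap \CH(\mathbf{v}_{\widetilde{\alpha}'}, -\widetilde{\alpha}') = \emptyset,
\]
which by Lemma~\ref{lemma:StemQuadrants} reduces to the stem-quadrant condition
\[
\mathbf{v}_{\widetilde{\alpha}} - \mathbf{v}_{\widetilde{\alpha}'} \in Q(\widetilde{\alpha}) - Q(-\widetilde{\alpha}') = Q(\widetilde{\alpha}) + Q(\widetilde{\alpha}'),
\]
where the last equality uses the identity $Q(-\ell) = -Q(\ell)$. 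This identity is immediate from the characterization of $Q(\ell)$ as those spacelike vectors whose hyperbolic axis is perpendicular to $\ell$ with attracting fixed point in the closure of the negative halfspace of $\ell$ (equivalently, repelling endpoint in $\overline{h_\ell}$), since negating a spacelike vector swaps the attracting and repelling fixed points of its associated hyperbolic flow.

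Next, I would reduce to the case where $\widetilde{\alpha}$ and $\widetilde{\alpha}'$ share a common tile. Form a chain of tiles $\Delta_0, \ldots, \Delta_n$, where $\Delta_{i-1}$ and $\Delta_i$ are adjacent across an arc $\widetilde{\beta}_i \in \widetilde{\mathscr A}$, with $\widetilde{\beta}_1 = \widetilde{\alpha}'$ and $\widetilde{\beta}_n = \widetilde{\alpha}$; orient each $\widetilde{\beta}_i$ so that $\Delta_i$ lies on its positive side. Since $\widetilde{\beta}_i$ and $\widetilde{\beta}_{i-1}$ are disjoint and $\widetilde{\beta}_i$ lies in the closure of the positive halfspace of $\widetilde{\beta}_{i-1}$ (as it bounds the tile $\Delta_{i-1}$, which lies on the positive side of $\widetilde{\beta}_{i-1}$), one deduces $h_{\widetilde{\beta}_i} \subset h_{\widetilde{\beta}_{i-1}}$ for consecutive arcs; by induction the same holds for any $i < j$. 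Since the relation $A \subset \mathrm{Int}\,B$ is transitive, chaining reduces the lemma to the consecutive case.

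For the consecutive case, denote the three involved tiles by $\Delta_0, \Delta, \Delta_2$, with $\widetilde{\alpha}'$ separating $\Delta_0$ from $\Delta$ and $\widetilde{\alpha}$ separating $\Delta$ from $\Delta_2$. A direct computation gives
\[
\mathbf{v}_{\widetilde{\alpha}} - \mathbf{v}_{\widetilde{\alpha}'} = \tfrac{1}{2}\bigl(\xi_{\Delta_2} - \xi_{\Delta_0}\bigr) = \tfrac{1}{2}\bigl(\psi_{\Delta_2, \Delta} + \psi_{\Delta, \Delta_0}\bigr).
\]
By the paper's convention, $\psi_{\Delta_2, \Delta}$ is a hyperbolic Killing field whose axis is perpendicular to $\widetilde{\alpha}$ and which translates toward $\Delta$; since $\Delta$ lies on the negative side of $\widetilde{\alpha}$, the attracting endpoint of this axis lies in the closure of the negative halfspace of $\widetilde{\alpha}$, equivalently the repelling endpoint lies in $\overline{h_{\widetilde{\alpha}}}$, placing $\psi_{\Delta_2, \Delta} \in Q(\widetilde{\alpha})$. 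Likewise $\psi_{\Delta, \Delta_0} \in Q(\widetilde{\alpha}')$. Summing and using that each stem quadrant is a cone closed under positive scaling produces the required membership.

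The main obstacle will be the orientation bookkeeping: verifying $Q(-\ell) = -Q(\ell)$ from the attracting/repelling characterization of $Q(\ell)$, and checking that the ``translates toward the second subscript'' convention for $\psi_{\cdot,\cdot}$ places the vector in $Q$ of the arc with the correct orientation rather than its antipode. Once these sign conventions are pinned down, the chaining and the computation in the consecutive case are essentially formal.
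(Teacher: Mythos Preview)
Your proof is correct and follows essentially the same strategy as the paper's: reduce to the case of arcs bounding a common tile, invoke the stem-quadrant Lemma~\ref{lemma:StemQuadrants}, and then chain. The only difference is in how Lemma~\ref{lemma:StemQuadrants} is applied: you reformulate \eqref{eqn:inclusion} as a disjointness statement and appeal directly to the ``if and only if'' criterion $\vu_1-\vu_2\in Q(\ell_1)-Q(\ell_2)$, whereas the paper instead writes both vertices relative to the common-tile Killing field $\xi_{\Delta''}$, observes the nesting $\CH(\xi_{\Delta''},\widetilde{\alpha})\subset\mathrm{Int}\,\CH(\xi_{\Delta''},\widetilde{\alpha}')\cup\{\xi_{\Delta''}\}$ of halfspaces sharing a vertex, and then pushes each halfspace into itself using the first part of Lemma~\ref{lemma:StemQuadrants}. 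Your packaging is slightly slicker; the paper's makes the role of the common vertex $\xi_{\Delta''}$ more visible. The orientation bookkeeping you flagged (that $\psi_{\Delta_2,\Delta}$ translates toward the \emph{negative} side of $\widetilde{\alpha}$, hence has repelling endpoint in $\overline{h_{\widetilde{\alpha}}}$, hence lies in $Q(\widetilde{\alpha})$) is exactly what the paper also uses.
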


\begin{proof} 
First, consider the case that $\widetilde{\alpha}, 
\widetilde{\alpha}' \in \widetilde{\mathscr A}$ are two distinct arcs on the 
boundary of a common tile $\Delta''$. 
Let $\Delta$ (respectively $\Delta'$) denote the tile on the other side of 
$\widetilde{\alpha}$ (respectively $\widetilde{\alpha}'$) from $\Delta''$. 
Then the vertices of the crooked planes $\CP_{\widetilde \alpha}$ and 
$\CP_{\widetilde \alpha'}$ may be written:
\begin{align*}
\mathbf{v}_{\widetilde{\alpha}} &= \xi_{\Delta''} + (1/2)\psi_{\Delta, \Delta''},
 &\mathbf{v}_{\widetilde{\alpha}'} &= \xi_{\Delta''} + (1/2)\psi_{\Delta', \Delta''}.
\end{align*}
Hence the crooked halfspace 
$\CH_{\widetilde \alpha}:= \CH(\mathbf{v}_{\widetilde{\alpha}},\widetilde{\alpha})$ 
is obtained from $\CH(\xi_{\Delta''}, \widetilde \alpha)$ by translating in the direction 
$(1/2)\psi_{\Delta, \Delta''}$ and similarly 
$\CH_{\widetilde \alpha'}:= \CH(\mathbf{v}_{\widetilde{\alpha}'}, \widetilde{\alpha}')$ 
is obtained from $\CH(\xi_{\Delta''}, \widetilde \alpha')$ by translating in the 
direction $(1/2)\psi_{\Delta', \Delta''}$. 

In fact, the two crooked halfspaces, 
$\CH(\xi_{\Delta''},\widetilde \alpha)$ and 
$\CH(\xi_{\Delta''}, \widetilde \alpha')$
are nested and their bounding crooked planes meet only at the vertex:
\begin{align}\label{eqn:nested}
\CH(\xi_{\Delta''}, \widetilde \alpha) \subset  \mathrm{Int}\left(\CH(\xi_{\Delta''}, 
\widetilde \alpha')\right) \cup \{\xi_{\Delta''}\}.
\end{align}
The key observation is Lemma~\ref{lemma:StemQuadrants}.
If $\mathbf{w} \in \gg$ is an infinitesimal translation along an axis orthogonal 
to $\widetilde{\alpha}$ and pushes toward the negative side of $\widetilde{\alpha}$, 
then affine translation by $\mathbf{w}$ pushes the crooked halfspace 
$\CH(\mathbf{0}, \widetilde{\alpha})$ inside of itself. In particular,
\begin{align}
\CH_{\widetilde{\alpha}} &=  \mathbf{v}_{\widetilde{\alpha}} + 
\CH(\mathbf{0}, \widetilde{\alpha})\label{eqn:nested2} = 
\xi_{\Delta''} + (1/2)\psi_{\Delta, \Delta''} + \CH(\mathbf{0}, \widetilde{\alpha}) \\
&\subset \xi_{\Delta''} + \CH(\mathbf{0}, \widetilde{\alpha}) = 
\CH(\xi_{\Delta''}, \widetilde{\alpha}). \nonumber
\end{align}
Similarly, $\CH(\xi_{\Delta''}, \widetilde{\alpha}') + 
(1/2)\psi_{\Delta'', \Delta'} \subset  \CH(\xi_{\Delta''}, \widetilde{\alpha}')$, and hence:
\begin{align}\label{eqn:nested3} 
\CH(\xi_{\Delta''}, \widetilde{\alpha}') &\subset \CH(\xi_{\Delta''}, 
\widetilde{\alpha}') - (1/2)\psi_{\Delta'', \Delta'}\\
&= \CH(\xi_{\Delta''}, \widetilde{\alpha}') + (1/2)\psi_{\Delta', \Delta''}\nonumber \ = 
\ \CH(\mathbf{v}_{\widetilde{\alpha}'}, \widetilde{\alpha}'). \nonumber
\end{align}
So~\eqref{eqn:inclusion} follows from~\eqref{eqn:nested}, \eqref{eqn:nested2}, 
and ~\eqref{eqn:nested3} upon observing that the vertex $\xi_{\Delta''}$ is not 
contained in $\CH_{\widetilde{\alpha}}$.

Now a simple inductive argument shows that~\eqref{eqn:inclusion} indeed holds for 
\emph{any} pair of arcs $\widetilde{\alpha}, \widetilde{\alpha}'$ oriented so 
that the positive halfspace of $\widetilde{\alpha}$ is contained in that 
of $\widetilde{\alpha'}$.
\end{proof}
Observe that by Lemma~\ref{lem:ArcHalfspace}, the crooked planes in the collection 
\begin{align}\label{eqn:collection}
\left\{\CP_{\widetilde \alpha} := \CP(\mathbf{v}_ {\widetilde \alpha}, \widetilde \alpha):
 \widetilde \alpha \in \widetilde{\mathscr A} \right\}
\end{align}
are pairwise disjoint. Further, the halfspaces bounded by these crooked planes 
obey the same inclusion relations that hold for halfplanes in $\Ht$ bounded by 
the corresponding arcs.
To find a fundamental domain in $\Eto \cong \gg$ bounded by disjoint crooked planes, 
one simply chooses the crooked planes associated to a subset of arcs of
$\widetilde{\mathscr{A}}$ that bound a fundamental domain for the action on 
$\Ht$. This proves Corollary~\ref{cor:CPs}.

In the same spirit of Section~\ref{sec:Lipschitz}, there is a parallel theory of 
strip deformations and crooked planes in the setting of three-dimensional anti 
de Sitter geometry, see~\cite{MR3533195} and \cite{MR3323635}.

\subsection{Two-generator groups}\label{sec:ex}
We now focus on the special case that the free group $\Gamma$ has rank two, 
corresponding to Euler characteristic $\chi(\Sigma_0) = -1$. 
There are four possible topological types for~$\Sigma_0$. 
In each case, the arc complex $\overline{\mathcal X}$ is two-dimensional, 
but the combinatorics is quite different across the cases (see Figure~\ref{fig:small}). 
This results in a substantially different picture of the (projectivized) 
cone of proper deformations $\adm(\rho_0)$, 
depending on the topology of $\Sigma_0$ (see again Figure~\ref{fig:small}). 
We describe the qualitative behavior in each of the four cases below in 
the language of Theorem~\ref{thm:strips}.
However, we remark that the understanding of $\adm(\rho_0)$ in the rank two case, 
in particular each description below, 
predates Theorem~\ref{thm:strips}. 
Charette- Drumm-Goldman~\cite{MR2653729, MR3180618, MR3569564} % 
described a tiling of 
$\adm(\rho_0)$ according to which isotopy classes of crooked planes embed 
disjointly in the associated Margulis spacetime. 
From this, they deduced the Crooked Plane Conjecture, Corollary~\ref{cor:CPs}, 
in the rank two case. 
The relationship between $\adm(\rho_0)$ and the arc complex 
$\overline{\mathcal X}$ of $\Sigma_0$ is already apparent in this 
work, which was an important precursor to Theorem~\ref{thm:strips} and Corollary~\ref{cor:CPs}. 

\begin{figure}[h]
\labellist
\small\hair 2pt
\pinlabel {\bf (a)} [b] at 47 -18
\pinlabel {\bf (b)} [b] at 145 -18
\pinlabel {\bf (c)} [b] at 249 -18
\pinlabel {\bf (d)} [b] at 357 -18
\pinlabel {${}_1$} at 6 107
\pinlabel {${}_1$} at 67 34
\pinlabel {${}_1$} at 106 109
\pinlabel {${}_1$} at 115 33
\pinlabel {${}_1$} at 215 96
\pinlabel {${}_1$} at 209 63
\pinlabel {${}_1$} at 326 94
\pinlabel {${}_1$} at 333 28
\pinlabel {${}_2$} at 50 83
\pinlabel {${}_2$} at 46 54
\pinlabel {${}_2$} at 147 94
\pinlabel {${}_2$} at 145 56
\pinlabel {${}_2$} at 244 103
\pinlabel {${}_2$} at 245 14
\pinlabel {${}_2$} at 355 97
\pinlabel {${}_2$} at 357 65
\pinlabel {${}_3$} at 144 84
\pinlabel {${}_3$} at 113 63
\pinlabel {${}_3$} at 253 89
\pinlabel {${}_3$} at 223 71
\pinlabel {${}_3$} at 380 94
\pinlabel {${}_3$} at 380 28
\endlabellist
\centering
\includegraphics[width=12cm]{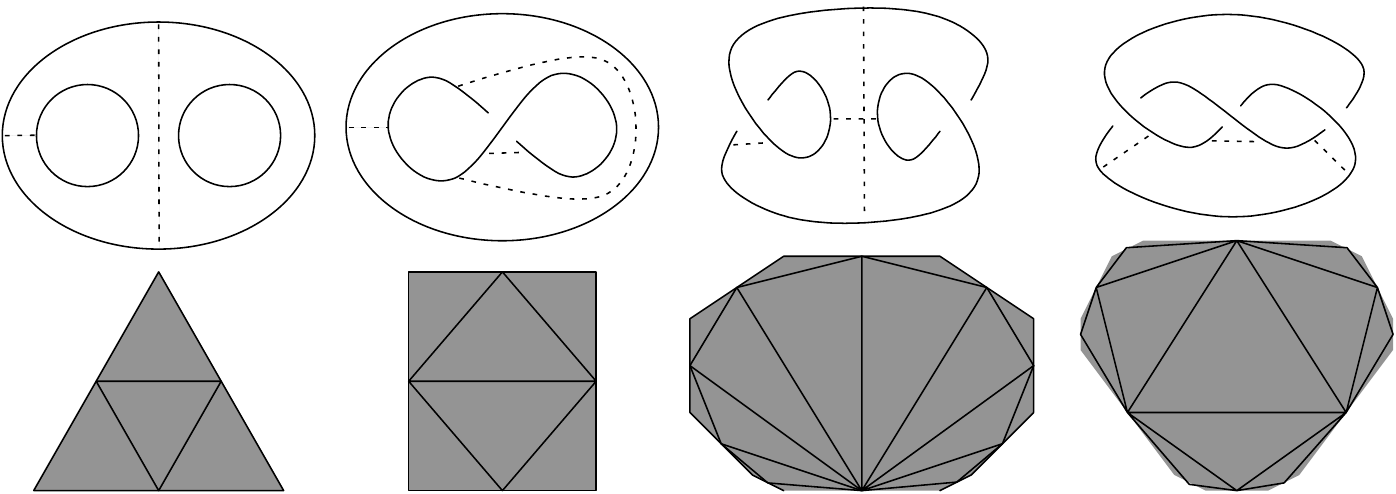}
\vspace{8pt}
\caption{Four surfaces of small complexity (top) and their arc complexes, 
mapped under $\mathsf{Strip}$ to the closure of $\adm(\rho_0)$ in an affine 
chart of $\P(\mathsf{H}^1(\Gamma, \gg_{\Ad \rho_0}))$ (bottom). 
Some arcs are labeled by  Arabic numerals. 
Figure originally appeared in~\cite{MR3480555}.}
\label{fig:small} 
\end{figure}

\smallskip

\noindent {\bf (a)} \textbf{Three-holed sphere}:
The arc complex $\overline{\mathcal X}$ has $6$ vertices, $9$ edges, $4$ faces.
Its image $\mathsf{Strip}(\overline{\mathcal X})$ is a triangle whose sides stand 
in natural bijection with the three boundary components of the convex core of 
$\Sigma_0$: an infinitesimal deformation $\u$ of~$\rho_0$ lies in a side of 
the triangle if and only if it fixes the length of the corresponding boundary 
component, to first order.
The set $\adm(\rho)=\mathsf{Strip}(\mathcal X)$ is the interior of the triangle. 
See also the left part of Figure~\ref{fig:TwoGeneratorDefSpaces1}.

\smallskip

\noindent {\bf (b)} \textbf{Two-holed projective plane}:
The arc complex $\overline{\mathcal X}$ has $8$ vertices, $13$ edges, $6$ faces.
Its image $\mathsf{Strip}(\overline{\mathcal X})$ is a quadrilateral.
The horizontal sides of the quadrilateral correspond to infinitesimal 
deformations $\u$ that fix the length of a boundary component.
The vertical sides correspond to infinitesimal deformations that fix the 
length of one of the two simple closed curves running through the half-twist.
The set $\adm(\rho_0)=\mathsf{Strip}(\mathcal X)$ is the interior of the quadrilateral. 
See also the right part of Figure~\ref{fig:TwoGeneratorDefSpaces1}.

\smallskip

\noindent {\bf (c)} \textbf{One-holed Klein bottle}:
The arc complex $\overline{\mathcal X}$ is infinite, with one vertex of infinite degree 
and all other vertices of degree either $2$ or~$5$.
The closure of $\mathsf{Strip}(\overline{\mathcal X})$ is an infinite-sided 
polygon with sides indexed in $\mathbb Z \cup\{\infty\}$.
The exceptional side has only one point in $\mathsf{Strip}(\overline{X})$, 
and corresponds to infinitesimal deformations that fix the length of the 
only nonperipheral, two-sided simple closed curve $\gamma$, which goes 
through the two half-twists.
The group $\mathbb Z$ naturally acts on the arc complex $\overline{\mathcal X}$, 
via Dehn twists along~$\gamma$.
All nonexceptional sides are contained in $\mathsf{Strip}(\overline{\mathcal X})$ 
and correspond to infinitesimal deformations that fix the length of some curve, 
all these curves being related by some power of the Dehn twist along~$\gamma$.
The set $\adm(\rho_0)=\mathsf{Strip}(\mathcal X)$ is the interior of the polygon. 
See also the left part of Figure~\ref{fig:TwoGeneratorDefSpaces2}.
\smallskip

\noindent {\bf (d)} \textbf{One-holed torus}:
The arc complex $\overline{\mathcal X}$ is infinite, with all vertices of 
infinite degree; it is known as the \emph{Farey triangulation}.
The arcs are parameterized by $\mathbb P^1(\mathbb{Q})$.
The closure of $\mathsf{Strip}(\overline{\mathcal X})$ contains infinitely many 
segments in its boundary. 
These segments, also indexed by $\mathbb{P}^1(\mathbb{Q})$, are in natural 
correspondence with the simple closed curves.
However, the boundary is not the union of these segments; there are additional 
points corresponding to deformations for which the length of every curve 
decreases, but the length of some lamination remains constant.

The  structure of the boundary of $\adm(\rho_0)$ in this case was described in
Gu\'eritaud~\cite{MR3485334} and Goldman-Labourie-Minsky-Margulis~\cite{Lams}. 
See also the right part of Figure~\ref{fig:TwoGeneratorDefSpaces2}.
For more details on the affine deformations of nonorientable surfaces, 
compare also Goldman-Laun~\cite{GoldmanLaun} and Laun's thesis~\cite{MR3553574}.

\subsection{Beyond free groups: right-angled Coxeter groups}\label{sec:racg}

The existence of proper affine actions by nonabelian free groups suggests the 
possibility that other finitely generated groups which are not virtually 
solvable might also admit proper affine actions.
However, in the more than thirty years since Margulis's discovery, 
very few examples have appeared.
In particular, until recently, all known examples of word hyperbolic 
groups acting properly by affine transformations on~$\R^n$ were virtually free groups. 
To conclude this section, we summarize further work of 
Danciger-Gu\'ertaud-Kassel~\cite{dgk-racg} that generalizes the ideas 
of Section~\ref{sec:vector-fields} to give many new examples both word hyperbolic and not.%

\begin{thm}{\cite[Thm 1.1]{dgk-racg}} \label{thm:racg}
Any right-angled Coxeter group on $k$ generators admits proper affine 
actions on $\R^{k(k-1)/2}$.
\end{thm}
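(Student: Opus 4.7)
The plan is to realize the right-angled Coxeter group $W$ as a discrete reflection subgroup of some $G = \mathrm{O}(p,q)$ with $p+q = k$, and then apply the general adjoint-affine framework developed in Section~\ref{subsec:deform}: since $\dim \mathfrak{so}(p,q) = k(k-1)/2$, the affine action of $G \ltimes_{\mathrm{Ad}} \mathfrak{g}$ on $\mathfrak{g}$ is an action on $\R^{k(k-1)/2}$ by isometries of a flat pseudo-Riemannian metric induced by the Killing form, and a proper action of $W$ would then follow from a suitably contracting infinitesimal deformation of the reflection representation.

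\emph{Step 1: Reflection representation.} Choose a Gram matrix $(a_{ij})$ with $a_{ii}=1$, $a_{ij}=0$ when $s_is_j = s_js_i$, and $a_{ij} < -1$ otherwise, realized as inner products $\langle e_i,e_j\rangle$ of a basis of $\R^{p,q}$ for the appropriate signature (the values $a_{ij}$ can be adjusted freely in the ultraparallel regime). The orthogonal reflections $r_i \in \mathrm{O}(p,q)$ in the hyperplanes $e_i^\perp$ generate a discrete subgroup isomorphic to $W$ by Vinberg's theory of reflection groups. This provides an embedding $\rho_0 : W \hookrightarrow G = \mathrm{O}(p,q)$, whose orbifold quotient is a right-angled reflection orbifold in the pseudo-hyperbolic space $\Ht^{p,q-1}$.

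\emph{Step 2: Contracting infinitesimal deformation.} Deform the reflection representation by \emph{pushing the mirrors apart}: replace each ultraparallel pair $(e_i, e_j)$ by a one-parameter family $(e_i^t, e_j^t)$ with $\langle e_i^t, e_j^t\rangle$ becoming more negative, while preserving orthogonality relations for commuting pairs. This yields a smooth path $\rho_t \in \mathrm{Hom}(W, G)$ of discrete embeddings, whose derivative defines a cocycle $\u \in \mathrm{Z}^1(W, \mathfrak{g}_{\mathrm{Ad}\,\rho_0})$. Geometrically, $\u$ is realized by a piecewise-Killing, $W$-equivariant vector field $X$ on $\Ht^{p,q-1}$, built out of ``infinitesimal strip deformations'' along the walls of the fundamental chamber, entirely analogous to those in \S\ref{sec:strips}.

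\emph{Step 3: From contraction to properness.} Invoke the pseudo-Riemannian analogue of Proposition~\ref{prop:proper}: since the ``widening'' deformation infinitesimally shrinks the spacelike directions between reflecting walls, the vector field $X$ (and every element of the affine space $X - \mathfrak{g}$) is infinitesimally contracting transverse to the neutral directions. A fixed-point/fibration argument, in the spirit of the Brouwer-type argument in the proof of Proposition~\ref{prop:proper}, then produces a $W$-equivariant fibration $X - \mathfrak{g} \to \Ht^{p,q-1}$; properness of the $W$-action on the base (which holds because $\rho_0$ is discrete and its fundamental chamber is a Coxeter polytope) lifts to properness of the affine action of $W$ on $\mathfrak{g} \cong \R^{k(k-1)/2}$.

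The main obstacle lies in Step~3: the theory of lipschitz vector fields developed for $\Ht$ uses negative curvature crucially, and its extension to pseudo-hyperbolic spaces $\Ht^{p,q-1}$ requires isolating the correct notion of ``infinitesimal contraction'' along spacelike rays and verifying it is preserved under the $W$-equivariant gluing of strips along walls. The combinatorial rigidity of right-angled Coxeter systems (commuting relations exactly at orthogonal walls) is what makes this construction succeed; without the right angles, the simultaneous contraction transverse to all wall families would generally fail. This reduces the remaining work to a combinatorial compatibility check at each codimension-two stratum, where two orthogonal walls meet, rather than a global dynamical argument.
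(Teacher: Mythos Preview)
Your outline follows essentially the same route as the paper's sketch: realize the Coxeter group via a Tits--Vinberg reflection representation $\rho_0:\Gamma\hookrightarrow G=\mathrm{O}(p,q+1)$ with $p+q+1=k$, so that $\dim\gg=k(k-1)/2$; deform $\rho_0$ inside the family of reflection representations by moving the walls of the fundamental polytope $\Delta_{\rho_0}$; differentiate to get a cocycle $\u$; and then run the contraction/fibration argument of Proposition~\ref{prop:proper}, suitably reinterpreted in the pseudo-Riemannian model space~$\Hpq$.

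One point needs correcting. You deform by pushing the mirrors \emph{apart} (making $\langle e_i^t,e_j^t\rangle$ more negative), and then assert this ``infinitesimally shrinks the spacelike directions between reflecting walls.'' It does the opposite: widening the chamber makes the natural walls-to-walls map $\Delta_{\rho_0}\to\Delta_{\rho_t}$ \emph{expanding}. The paper explicitly obtains contracting deformations by ``pushing the walls of $\Delta_\rho$ closer together.'' This is only a sign error---replacing $\u$ by $-\u$ repairs it---but as written your Step~2 and Step~3 are inconsistent with each other.

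A smaller remark: the paper describes the equivariant maps concretely as the projective walls-to-walls maps between polytopes, rather than via strip deformations glued along arcs; the strip picture from \S\ref{sec:strips} is specific to surfaces and does not transfer literally to the higher-rank polytope setting. Your identification of the main obstacle (that $\Hpq$ is not a metric space, so the Brouwer-type argument of Proposition~\ref{prop:proper} needs reinterpretation) matches the paper, which handles it by observing that $\rho_0(\Gamma)$-orbits in $\Hpq$ escape only in spacelike directions, where contraction in the cross-ratio sense can still be checked locally on the fundamental chamber.
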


A right-angled Coxeter group $\Gamma$ is a finitely presented group of the form
$$\Gamma = \langle s_1, \ldots, s_g \ | \  (s_i s_j)^{m_{ij}}= 1, 
\forall \ 1 \leq i,j \leq g\rangle$$
where $m_{ii}= 1$, i.e. each generator $s_i$ is an involution, and 
$m_{ij} = m_{ji} \in \{2, \infty\}$ for $i \neq j$, meaning two 
distinct generators either commute ($m_{ij} = 2$) or have no relation ($m_{ij} = \infty$). 
Some examples come from \emph{reflection groups} in hyperbolic space. Indeed, the 
group generated by reflections in the faces of a right-angled polyhedron in $\Hn$ 
is a right-angled Coxeter group. Though simple to define, right-angled Coxeter 
groups have a rich structure and contain many interesting subgroups.
As a corollary to Theorem~\ref{thm:racg}, the following groups admit proper affine actions:
\begin{itemize}
\item the fundamental group of any closed orientable surface of negative Euler characteristic;
\item any right-angled Artin group, see~\cite{dj00};
\item any virtually special group, see~\cite{hw08};
\item any Coxeter group (not just right-angled), see~\cite{hw10};
\item any cubulated word hyperbolic group, using Agol's virtual 
specialness theorem~\cite{ago13};
\item therefore, all fundamental groups of closed hyperbolic $3$-manifolds, 
using \cite{sag95,km12}: see \cite{bw12};
\item the fundamental groups of many other $3$-manifolds, see \cite{wis11,liu13,pw18}.
\end{itemize}

Januszkiewicz--\'Swi\c{a}tkowski \cite{js03} found word hyperbolic right-angled 
Coxeter groups of arbitrarily large virtual cohomological dimension. See 
also \cite{osa13} for another construction.
Hence, another consequence of Theorem~\ref{thm:racg} is:

\begin{cor} \label{cor:vcd}
There exist proper affine actions by word hyperbolic groups of arbitrarily 
large virtual cohomological dimension.
\end{cor}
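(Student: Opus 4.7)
The plan is to combine Theorem~\ref{thm:racg} directly with the construction of Januszkiewicz-\'Swi\c{a}tkowski~\cite{js03}. The corollary is essentially an immediate consequence of the two inputs, so this is more of a pointer than a strategy that requires any new ingredient.

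Concretely, I would proceed as follows. For each positive integer $N$, invoke~\cite{js03} to fix a word hyperbolic right-angled Coxeter group $\Gamma_N$ whose virtual cohomological dimension is at least $N$, and let $k_N$ denote its number of Coxeter generators. Then apply Theorem~\ref{thm:racg} to $\Gamma_N$ to obtain a proper affine action of $\Gamma_N$ on $\R^{k_N(k_N-1)/2}$. Since the virtual cohomological dimension is invariant under passing to the image of a faithful action, and since $N$ was arbitrary, this produces word hyperbolic groups admitting proper affine actions whose virtual cohomological dimensions are unbounded, which is exactly the statement of the corollary.

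The main (and only) thing to verify is that the groups produced in~\cite{js03} are genuinely right-angled Coxeter groups in the sense used in Theorem~\ref{thm:racg}. This is immediate from the construction of~\cite{js03}, which builds word hyperbolic groups of precisely this form by prescribing a right-angled Coxeter nerve that is a flag simplicial complex whose topology (via Bestvina-Brady / Davis-type computations) forces the virtual cohomological dimension to be as large as desired. There is therefore no genuine obstacle to the proof beyond correctly quoting the two results; the only small book-keeping point is that faithfulness of the affine action of $\Gamma_N$ is built into the statement of Theorem~\ref{thm:racg}, so the quotient does not collapse the cohomological dimension.
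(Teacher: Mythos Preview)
Your proposal is correct and matches the paper's own argument exactly: the corollary is deduced in one line by combining Theorem~\ref{thm:racg} with the Januszkiewicz--\'Swi\c{a}tkowski construction of word hyperbolic right-angled Coxeter groups of arbitrarily large virtual cohomological dimension. The additional remarks you make about faithfulness are fine but not needed for the paper's level of detail.
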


The Auslander Conjecture is equivalent to the statement that a group acting 
properly by affine transformations on~$\R^n$ is either 
virtually solvable, or has virtual cohomological dimension $<n$.
In the examples from Theorem~\ref{thm:racg}, the dimension $n = k(k-1)/2$ 
of the affine space grows quadratically in the number of generators $k$, while 
the virtual cohomological dimension of the Coxeter group acting is naively bounded above by~$k$.
Hence, Corollary~\ref{cor:vcd} is far from giving counterexamples to the Auslander Conjecture.

The affine actions from Theorem~\ref{thm:racg} come from infinitesimal 
deformations of representations into a Lie group $G$ as in 
Section~\ref{subsec:deform}, for $G$ an indefinite orthogonal group.
Indeed, a right-angled Coxeter group $\Gamma$ on $k$ generators 
(say, infinite and irreducible) admits explicit families of discrete 
\emph{reflection group} embeddings 
\[ \Gamma\xrightarrow{~\rho~}\Orth(p,q+1) =: G \] 
for $k = p+q+1$, 
which have long been studied by Tits, Vinberg, and others. 
The strategy from Section~\ref{sec:vector-fields} of ensuring properness of the 
affine action from contraction of the deformation works well when $q = 0$.
In that case, each representation $\rho$ acts by reflections in the walls of a 
right-angled polytope~$\Delta_{\rho}$ in hyperbolic space $\Hp$. 
For two such representations $\rho, \rho'$, 
natural $(\rho, \rho')$-equivariant maps $f$ 
are described explicitly by mapping $\Delta_\rho$ to $\Delta_{\rho'}$ 
projectively, walls-to-walls, 
and extending equivariantly by reflections. 
Deformations $\rho'$ for which the maps $f$ are Lipschitz contracting are found, 
roughly, 
by pushing the walls of $\Delta_\rho$ closer together. 
The derivative of an appropriate path of such contracting Lipschitz deformations, 
for $\rho'$ smoothly converging to $\rho$, 
gives a $(\rho_0, \u)$-equivariant contracting lipschitz vector field and hence a proper affine action 
$\Phi_G(\rho_0, \u)$ by the argument given in Proposition~\ref{prop:proper}.

It should be noted that, still in the case $q =0$,  
the dimension of the representations, 
and hence of the corresponding affine actions, 
may sometimes be reduced: 
If for some $n \geq 2$, $\Gamma$ admits an action on $\Hn$ generated by reflections in some polytope $\Delta$, 
then a contracting deformation as above may be found in $\Hyp^{n+c-1}$ 
if the faces of $\Delta$ may be colored with $c$ colors so that neighboring faces have different color 
(see~\cite[Prop. 4.1]{dgk-racg}). 
For example, 
if $\Gamma$ is the group generated by reflections in a right-angled $2m$-gon in the hyperbolic plane (for $m \geq 3$), then we may take $c = 2$. 
There exists a path of deformations of this reflection group into % 
the isometry group of 
$\Hyp^3 = \Hyp^{2+2-1}$ for which tangent vectors to the 
path give proper affine actions in dimension 
$6 = \dim(\mathfrak{so}(3,1))$. Note that in this example, 
$\Gamma$ contains surface subgroups of finite index.

The general case of Theorem~\ref{thm:racg} requires %
indefinite orthogonal groups of higher $\R$-rank, that is,  $q > 0$; 
indeed, not all right-angled Coxeter groups may be realized as reflection groups in some hyperbolic space.
Here, one could attempt the contraction strategy of Section~\ref{sec:vector-fields} 
in the higher rank Riemannian symmetric space $\o{X}$ of $G$. 
However, the most natural space  in which to see the geometry of the Tits--Vinberg representations 
$\rho: \Gamma \to G$ is in a \emph{pseudo-Riemannian symmetric space}, 
namely the pseudo-Riemannian analogue $\Hpq \subset \R\P^{k-1}$ 
of $\Hp$ in signature $(p,q)$. 
Indeed, as above, the $\rho$-action of $\Gamma$ is by reflections in the walls of a natural fundamental domain, 
a certain polytope $\Delta_\rho \subset \Hpq$, and natural $(\rho,\rho')$-equivariant 
maps $f$ are defined by taking $\Delta_\rho$ projectively to $\Delta_{\rho'}$, 
walls-to-walls. 
Further, since the ``distances'' in $\Hpq$ are computed by a 
simple cross-ratio formula, 
similar to $\Hp$ (in the projective model), the ``contraction'' 
properties of the maps $f$ are easy to check locally in the fundamental domain $\Delta_\rho$.
Theorem~\ref{thm:racg} is proved by employing a version of the contraction 
strategy from the $\Hp$ setting, adjusted and reinterpreted appropriately to 
work in the pseudo-Riemannian space $\Hpq$.
Despite the obvious hurdle that $\Hpq$ is not a metric space, 
enough structure survives for this approach to work. 
One key observation is that $\rho(\Gamma)$-orbits 
in $\Hpq$ escape only in \emph{spacelike} (that is, positive) directions, 
in which their growth resembles that of actions on $\Hp$.

\section{Higher dimensions}

\subsection{Non-Milnor representations}\label{sec:nonMilnorHigherDim}
Margulis's original work can be reinterpreted as the discovery of the first 
known non-Milnor representation (see
Definition~\ref{def:Milnor}), namely the standard representation of~$\SO(2, 1)$ 
on~$\R^3$. We now discuss the question of identifying Milnor and non-Milnor 
representations in higher dimensions. Recall Proposition~\ref{prop:EigenvalueOne}, 
that if $\rho$ does not have the property that every element acts with one as 
an eigenvalue, it is automatically Milnor.
Observe, as in Section~\ref{sec:Deformations}, that the standard representation of 
$\SOto$ is isomorphic to the adjoint representation of $\SOto$ and more 
generally, that the adjoint representation of any semisimple Lie group $G$ 
has the property that every element %
of infinite order %
acts with $1$ as an eigenvalue. 

\begin{thm}[Smilga~\cite{MR3477891}]  
\label{thm:proper_affine_adjoint}
For every semisimple real linear Lie group~$G$, the adjoint 
representation is non-Milnor whenever $G$~is not compact.
\end{thm}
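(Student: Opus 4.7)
The plan is to generalize Margulis's construction from $\SOto$ to an arbitrary non-compact semisimple $G$ by working with a vector-valued Margulis invariant taking values in a Cartan subalgebra of~$\gg$. Fix a Cartan decomposition $\gg = \mathfrak{k} \oplus \mathfrak{p}$, a maximal abelian subspace $\mathfrak{a} \subset \mathfrak{p}$, and the associated closed positive Weyl chamber $\overline{\mathfrak{a}}^+$. Say that $g \in G$ is $\R$-\emph{regular} (loxodromic) if its Jordan projection $\lambda(g)$ lies in the interior of $\overline{\mathfrak{a}}^+$; for such $g$ the centralizer $\mathfrak{z}_{\gg}(g)$ is a Cartan subalgebra $\mathfrak{a}_g$ and coincides with $\ker(\Ad(g) - \Id)$. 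Given an affine deformation $\widetilde{g}(x) = \Ad(g)x + v$, define the Margulis invariant $\alpha(\widetilde{g}) \in \mathfrak{a}_g$ to be the $\mathfrak{a}_g$-component of $v$ in the direct-sum decomposition $\gg = \mathfrak{a}_g \oplus (\Ad(g)-\Id)\gg$. As in the rank-one case, $\alpha$ is invariant under affine conjugation, is homogeneous of degree one under powers, and vanishes exactly when $\widetilde{g}$ has a fixed point; by Proposition~\ref{prop:EigenvalueOne}, the presence of a $1$-eigenspace for every $\Ad(g)$ is a necessary feature, and the adjoint representation has it built in automatically.

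Next I would establish the asymptotic additivity of $\alpha$ under Schottky-style hypotheses, the higher-rank analogue of~\eqref{eq:scalar_margulis_invariant_additivity}. For $\R$-regular $g$, the attracting and repelling flags in the full flag variety $G/P$, together with the Cartan subalgebra $\mathfrak{a}_g$, vary smoothly with~$g$ and are pinned down up to small error by any small neighborhood of~$g$. The claim is that whenever $g, h \in G$ are $\R$-regular, sufficiently contracting (large Jordan projections), and their attracting/repelling flag data are in sufficiently generic position, then $gh$ is again $\R$-regular, $\mathfrak{a}_{gh}$ is canonically identified via perturbation of the polar decomposition with both $\mathfrak{a}_g$ and $\mathfrak{a}_h$, and under these identifications $\alpha(\widetilde{gh}) = \alpha(\widetilde{g}) + \alpha(\widetilde{h}) + O(\epsilon)$, with error exponentially small in the contraction parameter. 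Proving this quantitative perturbation estimate is the principal technical obstacle; it is the higher-rank analogue of Margulis's original additivity lemma and requires sharp control on the proximal action on the flag variety, in the spirit of Benoist's work on Zariski dense subgroups of semisimple Lie groups.

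Finally I would assemble a free Schottky subgroup and check properness. Since $G$ is non-compact semisimple, $\R$-regular elements are Zariski dense, so one can choose a finite collection $g_1,\ldots,g_k \in G$ whose attracting/repelling flags are pairwise transverse and whose Cartan subalgebras $\mathfrak{a}_{g_i}$ jointly generate a Zariski dense configuration. After passing to sufficiently high powers, Tits-style ping-pong on the flag variety realizes $\Gamma_0 := \langle g_1^N,\ldots,g_k^N\rangle$ as a Zariski dense free subgroup of $G$ in which every nontrivial reduced word is $\R$-regular with attracting/repelling data controlled by its first and last syllables. Now choose translational parts $v_1,\ldots,v_k \in \gg$ so that the Margulis invariants $\alpha(\widetilde{g_i^N})$, transported to $\mathfrak{a}$ by the canonical identifications, all lie inside a fixed open convex cone $C \subset \mathfrak{a}$ bounded away from the Weyl chamber walls. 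Iterating the asymptotic additivity from the previous step, $\alpha(\widetilde{\gamma})$ for any reduced word $\gamma \in \Gamma_0$ of length $\ell$ stays in a fixed enlargement of $C$ and has norm $\geq c\,\ell$ for some $c > 0$ independent of $\gamma$. As in \S\ref{sec:OriginalProof}, the norm of the Margulis invariant controls the minimum Euclidean displacement of $\widetilde{\gamma}$, so $\{\gamma \in \Gamma_0 : \widetilde{\gamma}(K) \cap K \neq \emptyset\}$ is finite for every compact $K \subset \gg$, which is precisely properness. Once the perturbation estimate of the second paragraph is in hand, the remaining Schottky construction and properness verification run in close parallel to Margulis's original proof.
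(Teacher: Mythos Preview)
Your outline follows the same template the paper describes: define a vector-valued Margulis invariant, prove approximate additivity for transverse contracting pairs, build a Schottky free group, and deduce properness from linear growth of the invariant. That skeleton is right, but two points are not handled, and the second one is the heart of the matter.

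First, a technical correction. For an $\R$-regular (loxodromic) $g$, the fixed space $\ker(\Ad(g)-\Id)$ is not a Cartan subalgebra unless $G$ is split; it is a conjugate of the full Levi subalgebra $\mathfrak{l}=\mathfrak{a}\oplus\mathfrak{m}$. On the compact factor $\mathfrak{m}$ the action of nearby group elements is by rotations, not close to the identity, so the additivity estimate cannot hold there. The paper addresses this by splitting the neutral space further as $\V^t\oplus\V^r$ (here $\V^t=\V^L=\mathfrak{a}$ and $\V^r=\mathfrak{m}$) and taking the Margulis invariant $M(g)$ to be only the $\V^t$-component; your ``$\mathfrak a_g$-component of $v$'' is too large.

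Second, and more seriously, you never treat inverses. Reduced words in $\Gamma_0=\langle g_1^N,\dots,g_k^N\rangle$ involve $g_i^{-N}$ as well as $g_i^{N}$, so iterated additivity only gives linear growth once you also control $M(\widetilde{g_i^{-N}})$. These are tied to $M(\widetilde{g_i^{N}})$ by the longest Weyl element via \eqref{eq:vector_margulis_invariant_of_inverse}, and the paper singles out precisely this step as ``crucial'': one must choose the translational parts so that the invariants of the generators \emph{and} of their inverses point in a common direction, which is exactly condition~(ii) of Theorem~\ref{proper_affine_sufficient}. For the adjoint representation this is satisfied because $w_0\neq\Id$ on $\mathfrak a$ whenever $G$ is noncompact, but that has to be checked and the vector $v$ chosen accordingly. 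Merely placing the $M(\widetilde{g_i^N})$ in an arbitrary open cone $C\subset\mathfrak a$ ``bounded away from the Weyl chamber walls'' does not by itself force the contributions of inverses to stay in (an enlargement of) $C$; without the $w_0$-analysis your claimed lower bound $\|\alpha(\widetilde\gamma)\|\ge c\,\ell$ is unjustified.
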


Note that the proper affine actions by right-angled Coxeter groups of 
Section~\ref{sec:racg} have linear part in adjoint representations of special orthogonal groups.

In a different direction, the work of Abels-Margulis-Soifer~\cite{AMS02, AMS11} 
definitively settles the case of the standard representation of the special orthogonal groups
$\SOpq$ on $\R^{p,q}$:

\begin{thm}[Abels-Margulis-Soifer~\cite{AMS02, AMS11}]
\label{proper_affine_orthogonal}
Let $p \geq q$.
Then the standard representation of~$\SO(p, q)$ on~$\R^{p+q}$ is:
\begin{enumerate}
\item Milnor if
\begin{itemize}
\item  $p - q \neq 1$, or   
\item  $p - q = 1$ and $p$~is odd;
\end{itemize}  
\item non-Milnor 
if~$p - q = 1$ and $p$~is even.
\end{enumerate}
\end{thm}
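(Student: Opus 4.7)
My plan splits into an easy Milnor direction via Proposition~\ref{prop:EigenvalueOne} and harder cases requiring a refined dynamical analysis. For the easy direction: when $p - q$ is even then $p + q$ is also even, so $-I_{p+q}$ lies in $\SO(p,q)$ and has no eigenvalue $1$; by Proposition~\ref{prop:EigenvalueOne} the group $\SO(p,q)$ is Milnor. (For the special subcase $p = q$ one may alternatively use a generic element of the rank-$q$ $\R$-split Cartan, whose eigenvalues $e^{\pm t_i}$ avoid $1$.) When $p - q$ is odd, however, $p + q$ is odd, and a short counting argument shows every element of $\SO(p,q)$ has $1$ as an eigenvalue: eigenvalues of $g \in \mathsf{O}(p,q)$ come in reciprocal pairs $\{\lambda,\lambda^{-1}\}$ and complex-conjugate pairs, so contributions away from $\pm 1$ occur in even multiplicities, and $\det(g) = 1$ forces the multiplicity $m(-1)$ to be even, so $m(1)$ has the parity of $p + q$ and is therefore odd and positive. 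Hence Proposition~\ref{prop:EigenvalueOne} gives no obstruction and one must dig deeper in all remaining cases.

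For the non-Milnor case $p - q = 1$, $p$ even, I would construct a proper affine action by a Schottky ping-pong argument modeled on Margulis's original construction (Section~\ref{sec:OriginalProof}). The key point is that $p - q = 1$ makes the generic $1$-eigenspace one-dimensional, so a scalar Margulis invariant $\alpha$ is available just as in the classical $\SO(2,1)$ case. The parity assumption that $p$ is even is what guarantees a consistent orientation of these $1$-eigenspaces across the Zariski-dense free subgroup one wishes to deform. With this orientation in hand, one establishes the additivity estimate~\eqref{eq:scalar_margulis_invariant_additivity} for sufficiently $\epsilon$-hyperbolic, $\epsilon$-transverse generators, and then Margulis's ping-pong argument produces a proper affine deformation with Zariski-dense linear part.

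For the Milnor cases $p - q \geq 3$ odd and $p - q = 1$ with $p$ odd, the strategy of Abels-Margulis-Soifer~\cite{AMS11} proceeds in three steps: (i) define a (possibly vector-valued) Margulis invariant $\alpha(\gamma)$ taking values in the $1$-eigenspace of $\L(\gamma)$, whose dimension is at least $p - q$; (ii) prove a higher-dimensional analogue of the Opposite Sign Lemma (Theorem~\ref{thm:OppositeSign}) forcing the invariants of any Zariski-dense subgroup acting properly to lie in a common proper convex cone inside that eigenspace; (iii) use Benoist-style density of Jordan projections, combined with the Weyl group action of $\SO(p,q)$ on the zero-weight space, to show that no such common cone can exist. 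For $p - q \geq 3$ the Weyl group action on the multi-dimensional zero-weight space is rich enough to immediately obstruct any coherent cone. For $p - q = 1$ with $p$ odd, the invariant is scalar and a finer orientation/parity argument on the one-dimensional zero-weight line shows that the sign of $\alpha$ cannot be coherently defined over a Zariski-dense subgroup, contradicting the Opposite Sign Lemma.

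The main obstacle is this last step in the Milnor direction. Unlike the easy direction, here the obstruction is not spectral but dynamical and algebraic, requiring genuine control over the asymptotic distribution of Jordan projections and the extent to which the Weyl group mixes directions in the zero-weight space. The parity distinction between $p$ odd and $p$ even when $p - q = 1$ is especially delicate: the two subcases look identical from the viewpoint of Proposition~\ref{prop:EigenvalueOne}, and the split emerges only from a careful equivariant analysis of the orientation of the zero-weight line along the full action of the group.
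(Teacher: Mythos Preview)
Your outline broadly follows the paper's discussion: the even $p-q$ case via Proposition~\ref{prop:EigenvalueOne}, the non-Milnor case $p-q=1$ with $p$ even via Margulis's ping-pong with a scalar invariant, and the harder Milnor cases via an Opposite Sign Lemma. The paper (a survey) does not treat the $p-q\ge 3$ odd case, so there is nothing to compare there.

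The one place where your sketch is noticeably softer than the paper is the case $p-q=1$ with $p$ odd. You say the sign of $\alpha$ ``cannot be coherently defined over a Zariski-dense subgroup'' and appeal to a vague orientation/parity argument. The paper's point is sharper and worth internalizing: once an orientation convention for the $1$-eigenline is fixed (via the longest Weyl element, as in the later discussion around~\eqref{eq:vector_margulis_invariant_of_inverse}), the Margulis invariant \emph{is} well-defined, and the parity of $p$ shows up in the concrete identity
\[
\alpha(\gamma^{-1}) \;=\; -\,\alpha(\gamma)\qquad(\text{$p$ odd}),
\]
in contrast to $\alpha(\gamma^{-1})=\alpha(\gamma)$ for $p$ even. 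This single identity, together with the Opposite Sign Lemma, kills the $p$ odd case immediately: properness would force all $\alpha(\gamma)$ to share a sign, but then $\alpha(\gamma)$ and $\alpha(\gamma^{-1})=-\alpha(\gamma)$ share a sign, so $\alpha\equiv 0$ and every element fixes a point. No density of Jordan projections or cone argument is needed here; you should replace your ``coherent sign'' formulation with this identity.
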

\noindent
Observe that when $p-q$ is even, it is not the case that every element of 
$\SO(p,q)$ has one as an eigenvalue and Proposition~\ref{prop:EigenvalueOne} 
implies that $\SO(p,q)$ is Milnor. 
The general case, however, involves the detailed analysis from Margulis's original argument.
The case when $q = p-1$ is the most interesting. 
Then a Margulis invariant $\alpha$ may be defined for elements with regular linear holonomy 
and the Opposite Sign Lemma holds.
If $p$ is odd, then $\alpha(\gamma) = -\alpha(\gamma^{-1})$, and hence no proper 
affine actions of $\Ft$ exists. %
However, if $p$ is even, then $\alpha(\gamma) = \alpha(\gamma^{-1})$ as in Lemma~\ref{lem:alpha}.\eqref{alphan}, 
so there is no obvious sign obstruction to proper affine actions.
Indeed, Margulis's construction may be generalized in this case.

Smilga~\cite{MR3226793} constructs fundamental domains for the proper actions of $\Ft$ 
with linear part in  $\SO(p,p-1)$ with $p = 2k+2$ even. 
They are bounded by hypersurfaces inspired by the crooked planes of Section~\ref{sec:crooked}, but these hypersurfaces are curved rather than piecewise flat. Burelle-Treib~\cite{BT18}, on the other hand, have found a generalization of crooked planes to $\SO(2k+2,2k+1)$ constructed by flat hypersurfaces, which give rise to fundamental domains for the action of the group on the sphere (quotient of $\mathbb{R}^{4k+3} \setminus \{0\}$ by positive scalars) minus the limit set.
The Burelle-Treib construction 
 may very likely be extended to fundamental polyhedra in the affine space.
In the same setting, Ghosh-Treib~\cite{GhTr} proved an analogue of the 
Goldman-Labourie-Margulis properness criterion~\ref{thm:GLM}. 
Following Ghosh's  earlier work~\cite{MR3668058} on the dynamical structure of Margulis spacetimes, 
they interpret proper affine actions as an 
extension of Labourie's Anosov representations~\cite{MR2221137} to the non-reductive context.

Smilga gave a sufficient condition for an irreducible 
representation of a semisimple group to be non-Milnor,
which is conjectured~\cite{Smi16b} to be {\em necessary\/} as well.

Let $G$~be a semisimple real Lie group with Lie algebra $\gg$.
Choose in~$\gg$ a \emph{Cartan subspace} $\aa$, and a system $\Sigma^+$ of positive restricted roots.
Recall  that $\aa$ is a maximal abelian subalgebra of $\gg$ consisting of hyperbolic elements,
and a {\em restricted root\/} is an element $\alpha \in \aa^*$ such that the restricted root space
\[
\gg^\alpha := \setsuch{Y \in \gg}{\forall X \in \aa, \; [X, Y] = \alpha(X)Y }
\]
is nonzero. 
Restricted roots form a root system $\Sigma$; 
a system of positive roots $\Sigma^+$ is a subset of $\Sigma$ contained in a halfspace, 
such that 
\[\Sigma = \Sigma^+ \sqcup -\Sigma^+.\] 

Let $A := \exp(\mathfrak{a})$, 
and let $L$ be the centralizer of $\mathfrak{a}$ in $G$.
The {\em longest element\/} of the {\em restricted Weyl group\/} 
$W := N_G(A)/Z_G(A)$ 
is the unique element~$w_0$ such that $w_0(\Sigma^+) = \Sigma^-$. We choose some representative $\tilde{w_0} \in G$ of this element.
(Compare \cite{MR1920389}.)

\begin{thm}[Smilga \cite{Smi16b}]
\label{proper_affine_sufficient}
Suppose that
$G \xrightarrow{~\rho~} \GL(\V)$ 
is an irreducible representation
such that 
\begin{enumerate}[(i)]
\item $\forall l \in L,\; \rho(l) \cdot v = v$; 
\item  $\rho(\tilde{w}_0) \cdot v \neq v$
\end{enumerate}
for some $v\in\V.$
Then $\rho$~is non-Milnor.
\end{thm}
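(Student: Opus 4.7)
The plan is to generalize Margulis's original construction from \S\ref{sec:OriginalProof}, producing a proper affine action of $\Ft$ whose linear part is Zariski dense in $G$ by arranging translational parts so that a suitably generalized Margulis invariant grows linearly in word length. Conditions (i) and (ii) are precisely what is needed to manufacture this invariant inside an arbitrary irreducible representation.

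First I would set up the linear-algebraic framework. Call an element $g\in G$ \emph{loxodromic} if its Jordan projection lies in the interior of the positive Weyl chamber of $\aa$. For such a $g$, the representation $\rho$ decomposes $\V$ into weight subspaces under the Cartan-like torus containing $g$, which collect naturally into an expanding part $\V_g^+$, a neutral part $\V_g^0$, and a contracting part $\V_g^-$. Condition (i) forces the fixed vector $v$ to lie in the neutral subspace for a base loxodromic element $g_0$, and $L$-invariance makes the assignment $g\mapsto v_g\in \V_g^0$ well-defined and $G$-equivariant on the Zariski-open set of loxodromic elements. Condition (ii) is exactly what prevents $v_g$ from being trivially forced to equal $v_{g\inv}$ (which would produce the obstruction $\alpha(\gamma)=-\alpha(\gamma\inv)$ that rules out proper actions, as in the odd-$p$ case of Theorem~\ref{proper_affine_orthogonal}). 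Dually one produces a canonical $\rho(g)^*$-fixed functional $v_g^*\in\V^*$, and I define
\[
\alpha(g,\u) \ := \ \big\langle v_g^*,\ \u - (\rho(g)-\Id)x_g \big\rangle,
\]
where $x_g$ is any point realizing the $g$-invariant affine subspace parallel to $v_g$; this is independent of the choice of $x_g$ and is conjugation-invariant, mirroring Lemma~\ref{lem:alpha}.

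The technical heart is an asymptotic additivity estimate generalizing \eqref{eq:scalar_margulis_invariant_additivity}: for loxodromic $g,h\in G\ltimes\V$ whose linear parts are sufficiently contracting and whose attracting/repelling flags are in sufficiently general position (both quantified, say, by an $\varepsilon$-transversality condition in the flag variety $G/P$), one has
\[
\alpha(gh) \ =\ \alpha(g) + \alpha(h) + O(\varepsilon),
\]
along with control on the neutral vector $v_{gh}$ being close to a definite combination of $v_g,v_h$. I would prove this by using a $KAK$-decomposition to compare $\rho(gh)$ with $\rho(g)\rho(h)$ on the relevant subspaces, exploiting the contraction dynamics of $\rho(g)$ and $\rho(h)$ on the flag variety and the fact that by (i) the vector $v$ depends only on the $L$-coset, so variations of $v_g$ are controlled by variations of a single flag.

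The remaining steps are then standard. Using Tits's ping-pong together with Benoist's results on the existence of loxodromic, $\varepsilon$-transverse elements in any Zariski-dense subgroup, I choose generators $g_1,g_2$ of a Zariski-dense free subgroup $\langle g_1,g_2\rangle<G$ whose sufficiently high powers $g_i^{N}$ are as contracting and transverse as desired. I then add translational parts $\u_i$ with $\alpha(g_i^N,\u_i)>0$ and uniformly positive, which is possible because condition (ii) guarantees the pairing $\langle v_{g_i^N}^*,\cdot\rangle$ is nontrivially represented in $\V$. Iterating the additivity estimate along any reduced word $\gamma$ of length $n$ in $\{g_1^{\pm N},g_2^{\pm N}\}$ yields $\alpha(\gamma)\ge cn$ for some $c>0$, and properness of the affine action follows exactly as in Drumm--Goldman~\cite{MR1060633}: the minimum displacement of any compact set by $\gamma$ is bounded below by a constant times $\alpha(\gamma)$, so for any compact $K\subset\V$ only finitely many $\gamma$ satisfy $\gamma(K)\cap K\neq\emptyset$.

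The main obstacle will be the asymptotic additivity lemma, which is the one place where the higher-rank geometry of $G$ truly enters; in rank one (as in $\SO(2,1)$) this is an elementary $2\times 2$ estimate, but in higher rank one must carefully control the interaction between the representation $\rho$, the flag dynamics of loxodromic elements, and the specific placement of $v$ guaranteed by hypothesis~(i). Everything else, including the ping-pong, the sign-fixing of translational parts, and the final passage from lower bounds on $\alpha$ to properness, is a direct adaptation of the techniques already developed for Theorem~\ref{thm:proper_affine_adjoint} and for the $\SO(p,p-1)$ case of Theorem~\ref{proper_affine_orthogonal}.
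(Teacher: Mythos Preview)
Your overall template is the right one and matches the paper's sketch: decompose $\V$ into expanding/neutral/contracting pieces, define a Margulis invariant living in the neutral piece, prove an approximate additivity estimate \eqref{eq:vector_margulis_invariant_additivity}, and use it to force linear growth and hence properness. That part is fine.

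Where your proposal diverges from the paper, and where there is a genuine gap, is in the nature of the Margulis invariant and the role of condition~(ii). In this generality the Margulis invariant is \emph{vector-valued}: the paper further splits $\V^\sube = \V^t \oplus \V^r$ with $\V^t = \V^L$, and $M(g)$ is the translation vector of (a suitable conjugate of) $g$ along~$\V^t$. Your reduction to a scalar via a single dual functional $v_g^*$ throws away the structure needed to handle the case $\dim \V^t > 1$, and more importantly it obscures the key identity \eqref{eq:vector_margulis_invariant_of_inverse}, namely $M(g^{-1}) = \rho(\tilde w_0)\cdot M(g)$. You have the role of~(ii) essentially backwards: condition~(ii) does \emph{not} prevent $v_g$ from equalling $v_{g^{-1}}$, nor does it guarantee nontriviality of your pairing (that is what~(i) does, since $\V^t = \V^L \neq 0$). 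Rather, once one prescribes translational parts so that the generators have $M(g_i)$ equal to a large multiple of~$v$, identity~\eqref{eq:vector_margulis_invariant_of_inverse} forces $M(g_i^{-1})$ to be the corresponding multiple of $\rho(\tilde w_0)\cdot v$; condition~(ii) is precisely what allows these two vectors to be arranged to point in a common ``growing'' direction so that additivity yields unbounded growth rather than cancellation. Without correctly locating this step, your argument does not explain why the construction succeeds here but fails in, say, the odd-$p$ case of Theorem~\ref{proper_affine_orthogonal}.
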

\noindent
Le Floch-Smilga~\cite{lFS18} have 
classified %
such $\rho$, when $G$~is split.

Since $L\supset A$, 
the first condition implies that 
the %
{\em restricted weight space associated to~$0$\/}
\[ \V^A = \setsuch{v \in \V}{\forall a \in A,\; \rho(a) \cdot v = v}  \]
is nonzero. 
Equivalently, every element of~$\rho(G)$ has~$1$ as an eigenvalue,  
consistent with Proposition~\ref{prop:EigenvalueOne}.

The proofs of Theorems~\ref{proper_affine_orthogonal}.(1), 
\ref{thm:proper_affine_adjoint} 
and~\ref{proper_affine_sufficient} all follow the same basic template 
as Margulis's original proof (see Section~\ref{sec:OriginalProof}), 
although the more general proofs are more complicated.

The main idea is to decompose the representation space~$\V$ as a direct sum of three subspaces
\[\V = \V^\subg \oplus \V^\sube \oplus \V^\subl,\]
and then construct a ``generalized Schottky group'' $\Gamma_\ell$ in~$\rho(G)$, so 
that every element $\gamma \in \Gamma_\ell$, conjugated by a suitable map, 
preserves all three spaces and has very large eigenvalues on~$\V^\subg$, 
very small eigenvalues on~$\V^\subl$ and eigenvalues ``close to~$1$'' 
on~$\V^\sube$. 
Moreover, there is then a further decomposition
\[\V^\sube = \V^t \oplus \V^r\]
such that 
every element of~$\Gamma_\ell$ 
has a conjugate which stabilizes~$\V^r$ 
and \emph{fixes} $\V^t$. A crucial point is that $\V^t \neq 0$ (this comes from 
condition~(i), as it turns out to be precisely the subspace of fixed points of~$L$).

Now let $\A$ be the affine space corresponding to the vector space~$\V$. 
Then every affine map~$g$ with linear part in~$\Gamma_\ell$, when 
conjugated by a suitable map, preserves the decomposition of~$\A$ into three subspaces
\[A = \V^\subg \oplus \A^\sube \oplus \V^\subl\]
(where $\A^\sube$ is an affine subspace of~$\A$ parallel to~$\V^\sube$). 
Its restriction to~$\A^\sube$ is then a sort of generalized ``screw-displacement'': 
it preserves the directions parallel to~$\V^t$ and~$\V^r$ and acts by pure 
translation along~$\A^t$. 
We call such affine transformations \emph{quasi-translations}, 
and we call the translation vector along~$\V^t$ of the (suitable conjugate of)~$g$ the 
\emph{Margulis invariant} $M(g)$.

In particular cases, these constructions can be simplified. For % 
$G = \SOto$ acting on $\Rto$, the space~$\V^t$ has dimension~$1$, 
so that the vector Margulis invariant $M(g)$ reduces to the classical (scalar) Margulis invariant $\alpha(g)$; 
the space~$\V^r$ is trivial, 
so that quasi-translations reduce to just translations.

Now in the general case, the analog of 
formula~\eqref{eq:scalar_margulis_invariant_of_inverse} 
is: 
\begin{equation}
\label{eq:vector_margulis_invariant_of_inverse}
M(g^{-1}) = \rho(\tilde{w_0}) \cdot M(g).
\end{equation}
Proving this turns out to be straightforward. 
Most of the effort goes into proving an analog of \eqref{eq:scalar_margulis_invariant_additivity}: 
for elements $g$ and~$h$ in~$G$ that are ``regular'', ``sufficiently transverse'' 
and ``sufficiently contracting'', 
\begin{equation}
\label{eq:vector_margulis_invariant_additivity}
M(gh) \approx M(g) + M(h).
\end{equation}
We then conclude in the same way as Margulis: we combine
\eqref{eq:vector_margulis_invariant_additivity} with
\eqref{eq:vector_margulis_invariant_of_inverse} to prove that, 
for a suitable choice of the translation parts of the generators, 
Margulis invariants of large elements of the group grow unboundedly. 
More precisely, we prescribe these translation parts in such a way that the 
Margulis invariants of the generators become equal to (some sufficiently 
large multiple of) the vector~$v$ supplied by the hypotheses 
of Theorem~\ref{proper_affine_sufficient}. Here condition~(ii) is crucial, 
as it allows the Margulis invariants of both the generators and their 
inverses to go in the same direction. 

We conjecture the converse of Theorem~\ref{proper_affine_sufficient},
which generalizes part (1) of  Theorem~\ref{proper_affine_orthogonal}. We have the following partial result. 

Say that an irreducible representation $G\xrightarrow{~\rho~}\GL(\V)$ is {\em non-swinging\/}
if and only if  $\rho$ has no nonzero $w_0$-invariant weight. 
In particular if $G$ has no simple factor of type $A_{n \geq 2}$, 
$D_{2n+1}$ or $E_6$, 
then $w_0 = -\Id$,
and every representation is non-swinging.

\begin{thm}[\cite{Smi18}]
Let $G$ be a semisimple Lie group. 
Furthermore, suppose:
\begin{itemize}
\item the group $G$ is split; 
\item 
$\rho$ is a non-swinging irreducible representation;
\item $\rho$ does \emph{not} satisfy the hypotheses of Theorem~\ref{proper_affine_sufficient}.
\end{itemize}
Then 
$\rho$ is Milnor.
\end{thm}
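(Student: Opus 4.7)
The approach extends Margulis's Opposite Sign Lemma to this general setting, building on the vector-valued Margulis invariant $M(g) \in \V^L = \V^t$ from the proof of Theorem~\ref{proper_affine_sufficient}, which obeys the transformation law $M(g^{-1}) = \rho(\tilde w_0) \cdot M(g)$ and the approximate additivity~\eqref{eq:vector_margulis_invariant_additivity} for regular, transverse, sufficiently contracting elements. Failure of the hypotheses of Theorem~\ref{proper_affine_sufficient} splits into two subcases: either (a)~$\V^L = 0$, or (b)~$\V^L \neq 0$ but $\rho(\tilde w_0)$ acts as the identity on $\V^L$.

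In subcase~(a), since $G$ is split one has $L = MA$ with $M$ a finite subgroup of $K$ and $\V^L = (\V^A)^M$. The first step is to upgrade $\V^L = 0$ to $\V^A = 0$: any nonzero $M$-invariant vector of the zero weight space $\V^A$ could be Weyl-translated to produce a nonzero $w_0$-fixed weight vector, contradicting the non-swinging hypothesis together with irreducibility. Once $\V^A = 0$, generic elements of the split torus $A$ act with no eigenvalue~$1$, so by Proposition~\ref{prop:EigenvalueOne} (via Lemma~\ref{lem:OneEigenvalue}) the representation $\rho$ is Milnor.

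Subcase~(b) is the heart of the matter. Here $M(g^{-1}) = M(g)$, so the primary invariant provides no opposite-sign obstruction on its own---indeed this is exactly the $\SOto$ situation, which is non-Milnor. The plan is to construct a secondary invariant $N(g)$ taking values in a canonical $\tilde w_0$-antiinvariant subspace $\V^- \subset \V$, satisfying the sign-reversal $N(g^{-1}) = -N(g)$. The non-swinging hypothesis produces $\V^-$ naturally: each nonzero weight $\lambda$ is paired with a distinct weight $w_0\lambda$, so the nonzero-weight part of $\V$ splits as $\V^+ \oplus \V^-$ under $\rho(\tilde w_0)$. I would define $N(g)$ by projecting a suitably chosen conjugate of the translational cocycle onto $\V^-$, within the generalized Schottky framework of Theorem~\ref{proper_affine_sufficient}, and then establish an approximate additivity $N(gh) \approx N(g) + N(h)$ by an eigenvalue-comparison argument in the spirit of~\cite{MR3477891, Smi16b}.

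With $N$ in hand, the Opposite Sign argument closes the proof: if $\rho \ltimes \u$ acted properly on $\A$, then for some linear functional $\phi$ on $\V^-$ the composite $\phi \circ N$ would have to be bounded away from zero with consistent sign on large elements; but $\phi(N(\gamma^{-1})) = -\phi(N(\gamma))$, combined with continuity of $\phi \circ N$ on the connected space of geodesic currents (as in Section~\ref{sec:MargulisSpectrum}), forces a current on which $\phi \circ N = 0$, contradicting properness. The principal obstacle is the construction and approximate additivity of $N$: because the secondary invariant captures information from weight spaces on which eigenvalues are exponentially large or small, the dominant primary contribution must first be cancelled, and then the remaining error terms in $N(gh) \approx N(g) + N(h)$ controlled uniformly. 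It is precisely here that the split hypothesis (which trivializes the finite group $M$-ambiguity) and the non-swinging hypothesis (which guarantees $\V^- \neq 0$ and gives it a canonical definition) play their essential role.
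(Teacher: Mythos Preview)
The paper does not contain a proof of this theorem; it is stated as a citation to \cite{Smi18} and the survey moves on immediately. So there is no ``paper's own proof'' to compare against. That said, your proposal has genuine gaps that should be flagged.

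In subcase~(a), your passage from $\V^L = 0$ to $\V^A = 0$ does not work as written. You assert that a nonzero $M$-invariant vector in $\V^A$ can be ``Weyl-translated to produce a nonzero $w_0$-fixed weight vector,'' but the Weyl group fixes the zero weight, so translating a zero-weight vector keeps it in $\V^A$; you never reach a \emph{nonzero} weight, and hence the non-swinging hypothesis (which concerns only nonzero weights) is not contradicted. Moreover, you presuppose the existence of a nonzero $M$-invariant vector in $\V^A$, which is exactly $\V^L \neq 0$ --- circular. Even for split $G$ the finite group $M = Z_K(A)$ can act on $\V^A$ without fixed vectors, so $\V^L = 0$ does not obviously force $\V^A = 0$.

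Subcase~(b) is where the real content should lie, and your plan is largely aspirational. The primary Margulis invariant lives in $\V^t = \V^L$ precisely because that is where the dynamics is neutral; your proposed secondary invariant $N(g)$ lives in the nonzero-weight part of $\V$, where eigenvalues are exponentially large or small, so any naive projection of the translational cocycle there is dominated by the linear dynamics and will not be approximately additive without a delicate renormalization you do not supply. You also invoke ``continuity of $\phi \circ N$ on the connected space of geodesic currents'' --- but geodesic currents are a feature of hyperbolic surfaces (rank one), and there is no such theory available for general split semisimple $G$ in this paper or in the references given. The actual argument in \cite{Smi18} proceeds differently, and the role of the split and non-swinging hypotheses is not the one you sketch.
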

\noindent

\subsection{Auslander's conjecture in dimension at most six}

\begin{thm}[Abels-Margulis-Soifer~\cite{AMSAuslander}]
Let $n \leq 6$. Suppose $\Gamma < \Aff(\A^n)$ is a discrete subgroup acting 
properly and cocompactly on~$\A^n$.  Then $\Gamma$ is virtually solvable.
\end{thm}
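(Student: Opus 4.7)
The plan is to proceed by contradiction. Suppose $\Gamma$ is not virtually solvable; by Tits' alternative~\cite{MR0286898}, $\Gamma$ then contains a copy of $\Ft$ which itself acts properly on $\A^n$. By Proposition~\ref{prop:vsolvable}, the identity component $G := \big(\ZClosure{\L(\Gamma)}\big)^0$ has nontrivial semisimple part $S$, and by Proposition~\ref{prop:EigenvalueOne}, every element of $S$ acts on $\V = \R^n$ with $1$ as an eigenvalue. After passing to a torsion-free finite-index subgroup via Selberg's lemma, the quotient $\Gamma\backslash\A^n$ is a closed aspherical $n$-manifold and $\cd(\Gamma)=n$.

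The first step is to classify, up to conjugation, the connected semisimple subgroups $S\subgroup\GL(\V)$ with $\dim\V \le 6$ satisfying the eigenvalue-one constraint. This is a finite list obtained from the low-rank real semisimple Lie algebras and their representations of total dimension $\le 6$ which admit a common nonzero fixed vector under the maximal split torus: up to direct summing with a trivial representation, the candidates include several embeddings of $\SL(2,\R)$, of $\SOto$, of $\SO(3,1)^0$, of $\SO(2,2)^0$, of $\SO(3,2)^0$, and of $\SL(2,\R)\times\SL(2,\R)$.

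The bulk of the proof is then a case-by-case elimination. Any candidate which is Milnor is ruled out at once, since the proper $\Ft$-action produced by the Tits alternative directly contradicts Definition~\ref{def:Milnor}. In the three-dimensional case $S=\SOto\subgroup\GLthreeR$, which is non-Milnor by Margulis, the cohomological dimension argument of \S\ref{sec:CohomologicalDimension} closes the case: Proposition~\ref{thm:DiscreteEmbedding} gives $\L:\Gamma\xrightarrow{\cong}\L(\Gamma)\subgroup\SOto$, whence $\cd(\Gamma)=\cd(\L(\Gamma))\le\dim\Ht=2<3=n$, a contradiction.

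The main obstacle is the remaining non-Milnor candidates in dimensions $5$ and $6$, principally $\SO(3,2)^0\subgroup\GL(5,\R)$ (non-Milnor by Theorem~\ref{proper_affine_orthogonal}) and its augmentation in $\GL(6,\R)$ by a trivial summand. Here the associated Riemannian symmetric space $G/K$ has dimension $6$, so the crude bound $\cd(\Gamma)\le\dim(G/K)$ no longer forces a contradiction. The resolution, which is the heart of Abels-Margulis-Soifer, is to combine three ingredients: a ping-pong construction producing free subgroups with uniform control on the generalized vector-valued Margulis invariant $M(\gamma)$ from Section~\ref{sec:nonMilnorHigherDim}; the generalized opposite-sign relation $M(\gamma^{-1})=\rho(\tilde{w_0})\cdot M(\gamma)$ of equation~\eqref{eq:vector_margulis_invariant_of_inverse}; and the cocompactness hypothesis, which one shows forces $\L(\Gamma)$ to be a uniform lattice in a rank-one factor, reducing to the previously established cases of Goldman-Kamishima~\cite{MR739789} and Grunewald-Margulis~\cite{MR1075720}. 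The delicate step is establishing that the asymptotic growth of $M$ on a generic free subgroup is incompatible with a compact quotient $\Gamma\backslash\A^n$; it is precisely the shortness of the candidate list for $n\le 6$ that makes this dynamical bookkeeping tractable, and the failure of such control beyond dimension six is the reason the conjecture remains open in general.
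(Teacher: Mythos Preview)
Your outline has the right broad shape --- classify the possible semisimple parts $S$, dispose of the Milnor ones, and treat the rest by cohomological dimension or dynamics --- but there are concrete errors that leave genuine gaps.

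First, your claimed ``main obstacle'' $\SO(3,2)^0$ on $\R^5$ is in fact \emph{Milnor} by the very theorem you cite: in Theorem~\ref{proper_affine_orthogonal} the standard representation of $\SO(p,p-1)$ is Milnor when $p$ is odd, and here $p=3$. So this case already falls to your own ``ruled out at once'' step. More seriously, your candidate list omits the case the paper singles out as the hardest: $G = \SOoto \times \SL(3,\R)$ acting on $\Rto \oplus \R^3$ in dimension six. This $G$ is non-Milnor (proper affine deformations exist via the $\SOoto$ factor) and has $\dim(G/K) = 2 + 5 = 7 > 6$, so neither the Milnor shortcut nor cohomological dimension applies.

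Second, the architecture of the actual proof is different from what you describe. The Soifer--Tomanov theorem, which you do not mention, handles uniformly every configuration in which all simple factors of $S$ have real rank $\le 1$; this is where the Goldman--Kamishima and Grunewald--Margulis inputs actually enter. What remains are configurations with a factor of rank $\ge 2$. For the featured $\SOoto \times \SL(3,\R)$ case, the argument is not the vector-valued invariant $M$ of \S\ref{sec:nonMilnorHigherDim} together with a reduction to rank-one lattices; rather one defines a \emph{scalar} Margulis invariant from the $\SOoto \ltimes \Rto$ component alone, proves an Opposite Sign Lemma in this mixed setting (complicated by the fact that the attracting and repelling subspaces of a regular element have different dimensions), and then shows that cocompactness forces some orbit to escape in a timelike direction of the $\Rto$ factor, contradicting that lemma. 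Your final claim that cocompactness forces $\L(\Gamma)$ to be a uniform lattice in a rank-one factor has the logic reversed.
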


In the case that $n\le 5$, this result follows from Fried-Goldman~\cite{MR689763} for $n\le 3$, 
and independently Tomanov~\cite{MR3628466}  and Abels-Margulis-Soifer~\cite{AMSAuslander} 
for $n=4,5$.
(As Abels-Margulis-Soifer point out in \cite{AMSAuslander} 
an earlier version of Tomanov's work  contained a gap, 
which was subsequently filled in  \cite{MR3628466}.)
Furthermore Tomanov~\cite{MR3628466} 
proposed a suggestive generalization of Auslander's
conjecture to arbitrary algebraic groups of mixed type, 
and proved this stronger statement for $n\le 5$:

\begin{conj} 
Let $G$ be a real algebraic group, and suppose that $H\subgroup G$ 
contains a maximal reductive subgroup of $G$. 
Suppose that $\Gamma\subgroup G$ acts {\em crystallographically\/}
on $G/H$ (that is, $\Gamma$ is a discrete subgroup, acts properly on
$G/H$ and the quotient $\Gamma\backslash G/H$ is compact). 
Then $\Gamma$ is virtually polycyclic.
\end{conj}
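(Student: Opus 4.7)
The plan is to extend the strategy used by Fried-Goldman, Tomanov, and Abels-Margulis-Soifer to the general setting, exploiting the fact that the hypothesis $H \supseteq L$ for a maximal reductive subgroup $L$ of $G$ identifies $G/H$ with an ``affine-like'' homogeneous space. First, by Selberg's lemma, pass to a torsion-free finite-index subgroup, and assume $G$ is connected. Write $G = L \ltimes U$ for a Levi decomposition, with $U = R_u(G)$ the unipotent radical. After conjugating in $G$, assume $H \supseteq L$; then the Levi decomposition of $H$ gives $H = L\cdot(H \cap U)$, so $G/H \cong U/(H\cap U)$ is a quotient of the unipotent group~$U$. The composition $\Gamma \hookrightarrow G \twoheadrightarrow G/U \cong L$ defines the ``linear part'' $\LL : \Gamma \to L$, playing the role of linear holonomy from the affine case.

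Next I would study the Zariski closure $\ZClosure{\LL(\Gamma)} \subseteq L$ and decompose its identity component as $S \ltimes R$ with $S$ semisimple and $R$ solvable. By the generalization of Proposition~\ref{prop:vsolvable}, the goal is to show $S$ is compact; granted that, $\Gamma$ is virtually solvable, and being a finitely generated discrete matrix group, it is then virtually polycyclic as desired. Assume for contradiction that $S$ has a non-compact simple factor.

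The heart of the argument would be deriving a contradiction from proper cocompactness, via two complementary strategies already developed in the affine case. The first is the algebraic/topological one of \cite{MR689763}: seek $\Gamma$-invariant parallel tensors (vector fields or forms) on $G/H$ produced by a ``small'' semisimple part, yielding either fibrations of $\Gamma\backslash G/H$ reducing the dimension inductively, or cohomological-dimension obstructions in the spirit of Mess (Theorem~\ref{thm:Mess}). The second is the dynamical strategy of Margulis, refined by Abels-Margulis-Soifer and Smilga: define a generalized Margulis invariant on the set of elements of $\Gamma$ whose linear parts are regular and mutually transverse, and combine additivity/opposite-sign arguments analogous to \eqref{eq:scalar_margulis_invariant_additivity} and \eqref{eq:vector_margulis_invariant_of_inverse} to force a $\Gamma$-orbit in $G/H$ to revisit a compact set. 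In this generality, the Margulis invariant must take values in the $L$-fixed subspace of $U/[U,U]$, and then be extended inductively up the lower central series of $U$, each of whose graded pieces is its own representation of~$L$.

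The main obstacle is the same one that keeps the original Auslander conjecture open beyond dimension~$6$: one lacks a uniform classification of pairs $(S,\rho)$ with $\rho$ a non-Milnor representation of a non-compact semisimple group $S$ (Definition~\ref{def:Milnor}), and in particular a proof of the conjectural converse of Smilga's Theorem~\ref{proper_affine_sufficient}. Here the unipotent radical $U$ adds a genuinely new layer of complication: $S$ acts not on a single linear representation but on a filtered nilpotent group, so each graded piece of the lower central series of $U$ furnishes its own $L$-representation that must be controlled simultaneously. A workable path forward is thus (i) to settle the Milnor/non-Milnor dichotomy for irreducible representations; (ii) to handle the extensions by combining Smilga's invariant with a cocycle argument up the lower central series of $U$; and (iii) to translate non-properness on some graded piece back to non-properness on the original $G/H$ via a limit/accumulation argument. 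The step (iii) is where I expect the deepest new input to be required, since the reduction from an iterated ``affine-like'' problem on graded pieces to the genuine homogeneous space $G/H$ is precisely what has no analogue in the purely affine setting.
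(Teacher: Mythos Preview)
This statement is a \emph{Conjecture} in the paper, not a theorem: the paper attributes it to Tomanov, records that it is proved only for $n\le 5$, and gives no proof in general. There is therefore no ``paper's own proof'' to compare against, and your submission is not a proof either --- as you yourself make clear in the final paragraph, where you identify the same obstruction (the unresolved Milnor/non-Milnor dichotomy for semisimple representations) that keeps the ordinary Auslander conjecture open beyond dimension~$6$.

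What you have written is a reasonable research outline, and several of its ingredients are sound: the reduction $G/H \cong U/(H\cap U)$ via the Levi decomposition is correct and is indeed Tomanov's starting point; the dichotomy between cohomological-dimension/fibration obstructions and Margulis-invariant dynamics is exactly the template used in the low-dimensional cases. But you should be explicit that steps (i)--(iii) are each open problems, not lemmas you can cite. In particular, step~(i) alone --- the conjectural converse to Theorem~\ref{proper_affine_sufficient} --- is already a substantial open question, and step~(iii), lifting non-properness from a graded piece of the lower central series back to $G/H$, has no known general mechanism. So this is a plausible strategy sketch for an open conjecture, not a proof, and should be labeled as such.
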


For another perspective and possible attack on Auslander's conjecture,
compare Labourie~\cite{LabourieEntropy}.

Here are 
some components of the proof of Auslander's conjecture in low dimensions.
For convenience, consider the equivalent formulation: 
Assume $\Gamma < \Aff(\A^n)$ is a discrete subgroup acting properly 
on~$\A^n$ and  $\Gamma$ is not virtually solvable. 
Then $\Gamma\backslash \A^n$ is not compact.

As in Proposition~\ref{prop:vsolvable}, consider the semisimple part 
$S$ of the identity component $G = \big(\ZClosure{\L(\Gamma)}\big)^0$ 
of the Zariski closure. 
Write $S = S_1 \cdots S_k$ as an almost direct product of simple Lie groups.
The first key ingredient is the following theorem. 

\begin{thm}[Soifer~\cite{MR1331393},
Tomanov~\cite{MR1072918,MR3628466}]
If each factor $S_i$ has real rank $\leq 1$, 
then $\Gamma$ is not cocompact.
\end{thm}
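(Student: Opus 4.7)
The plan is to argue by contradiction, generalizing the cohomological-dimension strategy of Section~\ref{sec:CohomologicalDimension} (which handled dimension~$3$) to the higher-dimensional rank-$\leq 1$ setting. Assume $M^n = \Gamma\backslash\A^n$ is compact; by Selberg's lemma we may pass to a torsion-free finite-index subgroup, still denoted $\Gamma$, so that $M^n$ is a closed aspherical manifold with $\cd(\Gamma) = n$.

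First I would reduce to the semisimple quotient. Let $R$ denote the radical of $G$ and let $\bar\Gamma \subset S = G/R = S_1 \cdots S_k$ be the image of $\L(\Gamma)$, which is Zariski-dense. Since $\Gamma$ is not virtually solvable, $S$ is nontrivial. Each factor $S_i$ being of real rank $\leq 1$, its symmetric space $X_i := S_i/K_i$ is a rank-$1$ Riemannian symmetric space (real, complex, or quaternionic hyperbolic space, or the hyperbolic Cayley plane). The central step is to exploit this rank-$1$ structure factor by factor. Following Goldman-Kamishima~\cite{MR739789}, Grunewald-Margulis~\cite{MR1075720} and Tomanov~\cite{MR1062291,MR1072918} — who treated the case of a single rank-$1$ factor — one uses the convergence-group dynamics of each $S_i$ on $\partial X_i$, together with properness of the affine $\Gamma$-action on $\A^n$, to promote it to a proper action of $\bar\Gamma$ on $X := \prod X_i$ (via a Margulis-style transverse analysis of axes of loxodromic elements, using an analog of the Opposite Sign Lemma, Theorem~\ref{thm:OppositeSign}).

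Once proper action of $\bar\Gamma$ on $X$ is in hand, cohomological-dimension inequalities give $\cd(\bar\Gamma) \leq \dim X$. Combining this with a bound on the kernel of $\Gamma \to \bar\Gamma$ (which lies inside $R \cdot V$, where $V = \R^n$) and a dimension count tailored to the rank-$\leq 1$ hypothesis, one extracts the strict inequality $\cd(\Gamma) < n$, contradicting $\cd(\Gamma) = n$ and completing the proof.

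The main obstacle is the promotion of the proper affine action on $\A^n$ to a proper action on the product symmetric space $X$. For a single rank-$1$ factor, the convergence-group / Margulis-invariant argument transfers properness cleanly, essentially by following axes of loxodromic elements to $\partial X_i$. For a product of rank-$1$ factors the delicate point is ensuring that the translational parts of $\Gamma$ do not conspire across factors to mask impropriety when viewed through the individual projections to each $S_i$; this requires a careful product-structure analysis of how the translational cocycle decomposes along the Levi components. This is precisely the feature that breaks down in higher rank: the Benoist-Kobayashi properness criterion reveals a much richer landscape of proper actions on symmetric spaces of rank $\geq 2$, with no analogous packaging of affine data presently known — which is a principal reason why the Auslander conjecture remains open in general.
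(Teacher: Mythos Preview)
The paper does not actually prove this theorem: it is quoted as a black-box ingredient, with the proof outsourced to Soifer~\cite{MR1331393} and Tomanov~\cite{MR1072918,MR3628466}. So there is no ``paper's own proof'' to compare against, and your proposal must stand on its own.

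As a standalone argument, the proposal has two genuine gaps, and you have correctly located the first of them yourself. The step ``promote the proper affine action on $\A^n$ to a proper action of $\bar\Gamma$ on $X = \prod_i X_i$'' is essentially the entire content of the theorem, and you have not supplied an argument for it --- you cite Goldman--Kamishima, Grunewald--Margulis, and Tomanov for the single-factor case and then assert that a ``careful product-structure analysis'' handles products. But discreteness of the projection $\bar\Gamma \subset S$ is precisely what can fail in general (this is why Auslander's original 1964 argument was incomplete), and the passage from one rank-$1$ factor to several is not a routine induction: the cited single-factor arguments exploit specific features of the representation of each $S_i$ on $\V$, and when several factors are present the translational cocycle need not split compatibly. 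You acknowledge this as ``the main obstacle'' and then leave it unresolved.

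The second gap is the final dimension count. Even granting that $\bar\Gamma$ is discrete in $S$, so that $\cd(\bar\Gamma) \le \dim X$, you still need $\cd(\Gamma) < n$. The kernel of $\Gamma \to \bar\Gamma$ sits in the (solvable) group $R \ltimes \V$, and bounding $\cd$ of an extension requires controlling both $\cd(\ker)$ and $\cd(\bar\Gamma)$. Your phrase ``a dimension count tailored to the rank-$\le 1$ hypothesis'' is a placeholder, not an argument: you have not explained why $\dim X + \cd(\ker) < n$, nor why rank $\le 1$ is what makes this inequality strict. (In dimension~$3$ this worked because $\dim\Ht = 2$ and the kernel was trivial by Proposition~\ref{thm:DiscreteEmbedding}; neither fact is automatic here.) The actual proofs in the cited references use more structural Lie-theoretic input than a bare cohomological-dimension estimate.
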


Hence assume that at least one simple factor $S_1$ has real rank $\geq 2$.
By a dynamical argument,  for certain $S$ satisfying the hypotheses, 
$\Gamma$ does not act properly.
In other cases, a cohomological dimension argument
(as in \S\ref{sec:CohomologicalDimension})  rules out cocompactness. 
What remains is a handful of interesting cases requiring some more sophisticated arguments. 

Most interesting is
the standard (six dimensional) representation of 
$G = \SOto \times \SL(3,\R)$ on $\Rto \oplus \R^3$. 
Since $G$ is not Milnor,
proper affine deformations of  non-virtually-solvable discrete subgroups of $G$ exist. 
Since $\cd(\Gamma) \le \dim(G/K) = 7$, 
where $\Gamma$ is a torsionfree discrete subgroup of $G$
and $K$ is the maximal compact subgroup of $G$,
cohomological dimension does not obstruct the existence of a affine crystallographic action %%%
on $\A^6$.
In this case, Abels-Margulis-Soifer define a Margulis invariant $\alpha(g)$ for elements $g \in \Gamma$ 
whose linear part $\L(g)$ is regular enough. 
It is essentially the Margulis invariant of the $\SOto \ltimes \Rto$ part of $g$. 
Then they prove the Opposite Sign Lemma in this setting. 
Note that the dynamics in this setting is more complicated than in the setting of Margulis spacetimes. 
One sign that things are more complicated is that the attracting subspace for a regular element $\L(g)$ 
and for its inverse $\L(g)^{-1}$ have different dimension. 
To conclude the proof in this case, they show that cocompactness implies that some subset of an orbit escapes in a timelike direction of the $\Rto$ factor and derive a contradiction with the Opposite Sign Lemma.

\bibliographystyle{amsplain}
\bibliography{Margulis}

\end{document}